\definecolor{darkblue}{rgb}{0,0,.7}
\newlist{alphenum}{enumerate}{1}
\setlist[alphenum]{fullwidth,label={(\alph*)}}
\DeclareMathOperator{\argmin}{argmin}
\DeclareMathOperator{\dx}{\, \mathit{dx}}
\DeclareMathOperator{\ds}{\, \mathit{ds}}
\newcommand{\vecb}[1]{\boldsymbol{#1}}
\newcommand{\IR}{I_{RT_0}}
\newcommand{\IL}{I_{\mathcal{L}}}
\newcommand{\IN}{I_{\mathcal{N}_0}}
\newcommand{\ClemN}{\Pi_{\mathcal{N}_0}}
\newcommand{\numcoef}[1]{\num[round-precision=2,round-mode=places, scientific-notation=true, group-minimum-digits = 6]{#1}}
\theoremstyle{plain}
\newtheorem{theorem}{Theorem}[section]
\newtheorem{lemma}[theorem]{Lemma}
\newtheorem{proposition}[theorem]{Proposition}
\newtheorem{remark}[theorem]{Remark}
\newtheorem{assumption}[theorem]{Assumption}
\newcommand{\TheTitle}{Refined a posteriori error estimation for classical and pressure-robust Stokes finite element methods}
\author{P. L. Lederer}
\address{Institute for Analysis and Scientific Computing, TU Wien}
\email{philip.lederer@tuwien.ac.at}
\author{C. Merdon}
\address{Weierstrass Institute for Applied Analysis and Stochastics, Berlin}
\email{christian.merdon@wias-berlin.de}
\author{J. Sch\"oberl}
\address{Institute for Analysis and Scientific Computing, TU Wien}
\email{joachim.schoeberl@tuwien.ac.at}
\title{{\TheTitle}}
\date{5th of December, 2017}
\begin{document}

\maketitle

\begin{abstract}

Recent works showed that pressure-robust modifications of mixed finite element methods for the Stokes equations outperform their standard versions in many cases. This is achieved by divergence-free reconstruction operators and results in pressure independent velocity error estimates which are robust with respect to small viscosities. In this paper we develop a posteriori error control which reflects this robustness.

The main difficulty lies in the volume contribution of the standard residual-based approach
that includes the \(L^2\)-norm of the right-hand side. However, the velocity is only steered by the divergence-free part of this source term. An efficient error estimator must approximate this divergence-free part in a proper manner, otherwise it can be dominated by the pressure error.

To overcome this difficulty a novel approach is suggested
that uses arguments from the stream function and vorticity formulation of the Navier--Stokes equations. The novel error estimators only take the $\mathrm{curl}$
of the right-hand side into account and so lead to provably reliable, efficient and pressure-independent upper bounds
in case of a pressure-robust method in particular in pressure-dominant situations. This is also confirmed by some numerical examples with the novel pressure-robust modifications of the Taylor--Hood and mini finite element methods.

\keywords{incompressible Navier--Stokes equations \and  mixed finite elements \and  pressure robustness \and  a posteriori error estimators \and  adaptive mesh refinement
}

%\subclass{65N15 , 65N30, 76D07, 76M10}
\end{abstract}

\section{Introduction}

This paper studies a posteriori error estimators for the velocity of the Stokes equation with a special focus on pressure-robust finite element methods. Pressure-robustness
is closely related to the \(L^2\)-orthogonality of divergence-free functions onto gradients of \(H^1\)-functions. In particular, the exact velocity
\(\vecb{u}\) of the Stokes equations (with zero boundary data),
\begin{align*}
  -\nu \Delta \vecb{u} + \nabla p & = \vecb{f} \text{ in } \Omega \quad \text{and} \quad \vecb{u} \in \vecb{V}_0 := \lbrace \vecb{v} \in H^1_0(\Omega)^2 : \mathrm{div} \vecb{v} = 0 \rbrace,
\end{align*}
is orthogonal onto any \(q \in L^2(\Omega)\) in the sense that \(\int_\Omega q \mathrm{div}(\vecb{u}) \dx = 0\).
Consequently, \(\vecb{u}\) also solves the Stokes equations
with \(\vecb{f}\) replaced by \(\vecb{f} + \nabla q\) for  \(q \in H^1(\Omega)\).
This invariance property is in general not preserved for discretely divergence-free
testfunctions of most classical finite element methods that
relax the divergence constraint to attain inf-sup stability.
With an inf-sup-stable pair of a discrete velocity space \(\vecb{V}_h\)
and some discrete pressure space \(Q_h\) and the discretely divergence-free functions
\(\vecb{V}_{0,h} \subset \vecb{V}_h\),
the consistency error from the relaxed divergence constraint can be
expressed by the discrete dual norm , for any \(q \in L^2(\Omega)\),
\begin{align}\label{intro_discrete_orthogonality}
  \| \nabla q \|_{\vecb{V}_{0,h}^\star}
  := \!\! \sup_{\vecb{v}_h \in V_{0,h} \setminus \lbrace \vecb{0} \rbrace} \!\! \frac{\int_\Omega q \mathrm{div} \vecb{v}_h \dx}{\| \nabla \vecb{v}_h \|_{L^2}}
  \leq \begin{cases}
          \min_{q_h \in Q_h} \| q - q_h \|_{L^2} & \text{if } \vecb{V}_{0,h} \not\subseteq \vecb{V}_0,\\
          0 & \text{if } \vecb{V}_{0,h} \subseteq \vecb{V}_0.
       \end{cases}
\end{align}
Besides some expensive or exotic divergence-free methods like the Scott-Vogelius finite element method \cite{scott:vogelius:conforming,Zhang05}, most classical inf-sup stable
mixed finite element methods, including the popular Taylor--Hood \cite{HT74} and mini finite element families \cite{ABF84} have \(V_{0,h} \not\subseteq \vecb{V}_0\) and so
the term from \eqref{intro_discrete_orthogonality} appears in their a priori velocity gradient error estimate \cite{MR3097958} scaled with \(1/\nu\), i.e.
\begin{align}\label{intro_apriori}
  \| \nabla(\vecb{u} - \vecb{u}_h) \|^2_{L^2} \leq \! \inf_{\substack{\vecb{v}_h \in V_{0,h},\\ \vecb{u}_h
  = \vecb{v}_h \text{ on } \partial \Omega}} \! \| \nabla(\vecb{u} - \vecb{v}_h) \|^2_{L^2} + \frac{1}{\nu^2} \| \nabla p \|^2_{V_{0,h}^\star}.
\end{align}
This factor \(1/\nu\) causes a locking phenomenon. Indeed, for \(\nu \rightarrow 0\) or very complicated pressures, the pressure
contribution may dominate and lead to a very bad solution for the discrete velocity \(\vecb{u}_h\) \cite{Lin14,LM2015,wias:preprint:2177,LM2016}.

By a trick of \cite{Lin14} one can introduce a reconstruction operator \(\Pi\), that maps discretely divergence-free functions onto exactly divergence-free ones,
into the right-hand side and so transform any classical finite element method into a pressure-robust one. This replaces the pressure-dependent term in \eqref{intro_apriori} by a small consistency error of optimal order \cite{Lin14,BLMS15,LMT15,LM2016,2016arXiv160903701L}
and independent of \(1/\nu\).
Although this fixes the locking phenomenon and leads to huge gains in many numerical examples, efficient a posteriori error control for these methods
is an open problem. Efficient error estimators for the velocity error not only have to cope with the variational crime but also, and more importantly,
have to mimic the pressure-independence.

Standard residual-based a posteriori error estimators \(\eta\) usually have the form
\begin{align*}
  \| \nabla(\vecb{u} - \vecb{u}_h) \|_{L^2} \lesssim \eta := \eta_\text{vol} + \text{other terms}
\end{align*}
with a volume contribution \(\eta_\text{vol}\) and some other terms, like norms of the normal jumps of \(\vecb{u}_h\), data oscillations
or consistency errors.
In the standard residual-based error estimator for classical finite element methods \cite{MR2995179,MR3064266,MR993474,MR3425376}
the volume contribution takes the form (for any \(q \in H^1(\Omega)\) and piecewise Laplacian \(\Delta_\mathcal{T}\))
\begin{align}\label{intro_volume1}
   \eta_\text{vol} & = \nu^{-1} \| \nabla q \|_{V_{0,h}^\star} + \nu^{-1} \| h_\mathcal{T}(\vecb{f} - \nabla q + \nu \Delta_\mathcal{T} \vecb{u}_h) \|_{L^2}\\
   & \lesssim \| \nabla (\vecb{u} - \vecb{u}_h) \|_{L^2} + \nu^{-1} \left(\| p - q \|_{L^2} + \min_{q_h \in Q_h} \| q - q_h \|_{L^2} + \mathrm{osc_k}(\vecb{f} - \nabla q,\mathcal{T}) \right).\nonumber
\end{align}
The inequality above states efficiency, i.e. beeing also a lower bound of the real error,
and its dependence on the choice of \(q\).
Note, that for \(q \in Q_h\) the terms \(\| \nabla q \|_{V_{0,h}^\star} \leq \min_{q_h \in Q_h} \| q - q_h \|_{L^2} = 0\) vanish, but \(\| p - q \|_{L^2}\) remains, whereas
for \(q = p\) the term \(\| p - q \|_{L^2}\) vanishes, but the other two remain.
If the velocity error is at best as good as the error in the pressure (scaled by \(1/\nu\)), as it is the case for classical pressure-inrobust methods,
this estimate is fine (e.g.\ for \(q\) chosen as an \(H^1\)-interpolation of \(p_h\)).
As a result classical a posteriori error estimates, see e.g.~\cite{MR2995179,MR3064266,MR993474,MR3425376}, often perform the error analysis in a norm that
combines the velocity error and the pressure error. A pressure-robust method, however, allows for a decoupled error analysis of velocity error and pressure error
and so gives more control over both.

For a pressure-robust finite element method, the term \eqref{intro_volume1} can be replaced by
\begin{align}\label{intro_volume2}
  \eta_\text{vol} &= \nu^{-1} \| h_\mathcal{T}(\vecb{f} - \nabla q + \nu \Delta_\mathcal{T} \vecb{u}_h) \|_{L^2} \\
   &\lesssim \| \nabla (\vecb{u} - \vecb{u}_h) \|_{L^2} + \nu^{-1} \left(\| p - q \|_{L^2} + \mathrm{osc_k}(\vecb{f} - \nabla q,\mathcal{T}) \right).  \nonumber
\end{align}
Here, the choice \(q = p\) leads to a pressure-independent efficient estimate. However, this
cannot be considered a posteriori, since \(p\) is unknown. Hence, an efficient error estimator of this form for pressure-robust methods
hinges on a good approximation of \(q \approx p\) as already investigated in \cite{MR1471083,MR3366087}.

The main result of this paper concerns a different approach to estimate the velocity error that yields an estimator with the volume contribution
\begin{align}\label{intro_volume3}
   \eta_\text{curl} &= \nu^{-1} \| h_\mathcal{T}^2 \mathrm{curl}_\mathcal{T} (\vecb{f} + \nu \Delta_\mathcal{T} \vecb{u}_h) \|_{L^2} \\
   &\lesssim \| \nabla (\vecb{u} - \vecb{u}_h) \|_{L^2} + \nu^{-1} \mathrm{osc_k}( h_\mathcal{T} \mathrm{curl}_\mathcal{T}(\vecb{f} + \nu \Delta_\mathcal{T} \vecb{u}_h),\mathcal{T}). \nonumber
\end{align}
The advantage of \(\eta_\text{curl}\) over \(\eta_\text{vol}\) is that the \(\mathrm{curl}\) operator automatically cancels any \(\nabla q\) from the Helmholtz decomposition of \(\vecb{f} + \nu \Delta_\mathcal{T} \vecb{u}_h\)
and therefore no approximation of \(p\) as in \eqref{intro_volume2} is needed. Also note, that \(\eta_\text{curl}\) is similar to the volume contribution of a residual-based error
estimator for the Navier--Stokes equations in streamline and vorticity formulation \cite{MR1645033}.
However, the error estimator with this volume contribution is valid for any pressure-robust
finite element method like the Scott--Vogelius finite element method
\cite{scott:vogelius:conforming,Zhang05} or the novel family of pressure-robustly modified finite element methods of \cite{Lin14,BLMS15,LMT15,LM2016,2016arXiv160903701L} that allow for an interesting interplay between the Fortin interpolator \(I\)
and the reconstruction operator \(\Pi\) manifestated in the required assumption
\begin{align}\label{intro_assumption}
   \int_\Omega (1 - \Pi I) \vecb{v} \cdot \vecb{\theta} \dx
   \lesssim \| \nabla \vecb{v} \|_{L^2} \| h_\mathcal{T}^{2} \mathrm{curl} \vecb{\theta} \|_{L^2} \quad \text{for all } \vecb{\theta} \in H(\mathrm{curl},\Omega) \text{ and } \vecb{v} \in \vecb{V}_0.
\end{align}
%\begin{align}\label{intro_assumption}
%  \| (1 - \Pi I) \vecb{v} \|_{H(\mathrm{curl},\Omega)^{-1}} :=
%  \sup_{\theta \in H^1_0(\Omega)^3 / \nabla H^1(\Omega)} \frac{\int (1 - \Pi I) \vecb{v} \cdot \vecb{\theta} \dx}{\| h_\mathcal{T}^{2} \mathrm{curl} \vecb{\theta} \|_{L^2}}
%  \lesssim \| \nabla \vecb{v} \|_{L^2} \quad \text{for all } \vecb{v} \in \vecb{V}_0.
%\end{align}
We prove this assumption for certain popular finite element methods including the Taylor--Hood and mini finite element methods, and some elements with discontinuous pressure approximations. However, we only focus on the two-dimensional case, since the proofs
for the three-dimensional case are much more involved and therefore discussed in a future
publication.

The rest of the paper is structured as follows. Section~\ref{sec:stokes} introduces the Stokes equations and preliminaries as well as notation used throughout the paper.
Section~\ref{sec:prmethods} focuses on classical finite element methods and their recently developed pressure-robust siblings that are based on a suitable reconstruction operator.
Section~\ref{sec:standard_estimates} is concerned with standard residual-based error estimates for classical and pressure-robust finite element methods and
the efficiency of its contributions, in particular \eqref{intro_volume1} and \eqref{intro_volume2}, especially in the pressure-dominated regime.
Section~\ref{novelestimates} derives some novel a posteriori error bounds with the volume contribution \eqref{intro_volume3}
that are efficient and easy to evaluate for the pressure-robust finite element methods that satisfy Assumption \eqref{intro_assumption}.
In Section~\ref{sec:assumptionproof} this assumption is verified for many popular finite element methods and
their pressure-robust siblings.
Section~\ref{sec:numerics} studies numerical examples and employs the local contributions of the a posteriori error estimates as refinement indicators for
adaptive mesh refinement. The numerical examples verify the theory and show that the pressure-robust finite element methods converge with the optimal order
also in non-smooth examples.

\section{Model problem and preliminaries}\label{sec:stokes}
This section states our model problem and the needed notation.

\subsection{Stokes equations and Helmholtz projector}
The Stokes model problem seeks a vector-valued velocity field \(\vecb{u}\) and a scalar-valued pressure field \(p\) on a bounded Lipschitz domain \(\Omega \subset \mathbb{R}^2\) with Dirichlet data \(\vecb{u} = \vecb{u}_D\) along \(\partial \Omega\) and
\begin{align*}
  -\nu \Delta \vecb{u} + \nabla p & = \vecb{f} \quad \text{and} \quad \mathrm{div} \vecb{u} = 0 \quad \text{in } \Omega.
\end{align*}
The weak formulation characterises \(\vecb{u} \in H^1(\Omega)^2\) by \(\vecb{u} = \vecb{u}_D\) along \(\partial \Omega\) and
\begin{align*}
   \nu (\nabla \vecb{u}, \nabla \vecb{v}) - (p, \mathrm{div} \vecb{v}) & = (\vecb{f}, \vecb{v}) && \text{for all } \vecb{v} \in V := H^1_0(\Omega)^2,\\
				      (q, \mathrm{div} \vecb{u}) & = 0 && \text{for all } q \in Q := L^2_0(\Omega).
\end{align*}
In the set of divergence-free functions \(\vecb{V}_0 := \lbrace \vecb{v} \in V\, \lvert \, \mathrm{div} \vecb{v} = 0\rbrace\), \(\vecb{u}\)
satisfies
\begin{align*}
  \nu (\nabla \vecb{u}, \nabla \vecb{v}) = (\vecb{f}, \vecb{v}) \qquad \text{for all } \vecb{v} \in \vecb{V}_0.
\end{align*}
The Helmholtz decomposition decomposes every vector field into
\begin{align*}
  \vecb{f} = \nabla \alpha + \beta =: \nabla \alpha + \mathbb{P} \vecb{f}
\end{align*}
with \(\alpha \in H^1(\Omega) / \mathbb{R}\) and
\(\beta  =: \mathbb{P} \vecb{f} \in L^2_\sigma(\Omega) := \lbrace \vecb{w} \in H(\mathrm{div},\Omega) \, \vert \, \mathrm{div} \vecb{w} = 0, \vecb{w} \cdot \vecb{n} = 0 \text{ along } \partial D\rbrace\)
\cite{GR86}.
Note in particular, that the continuous Helmholtz projector satisfies \(\mathbb{P}(\nabla q) = 0\) for all \(q \in H^1(\Omega)\) which implies
\begin{align*}
  \nu (\nabla \vecb{u}, \nabla \vecb{v}) = (\mathbb{P} \vecb{f}, \vecb{v}) \qquad \text{for all } \vecb{v} \in \vecb{V}_0,
\end{align*}
hence \(\vecb{u}\) is steered only by the Helmholtz projector \(\mathbb{P} \vecb{f}\) of the right-hand side.

\subsection{Notation}
The set \(\mathcal{T}\) denotes a regular triangulation of \(\Omega\)
into two dimensional simplices with edges \(\mathcal{E}\) and nodes \(\mathcal{N}\).
The three edges of a simplex \(T \in \mathcal{T}\) are denoted by \(\mathcal{E}(T)\). Similarly,
\(\mathcal{N}(T)\) consists of the three nodes that belong to \(T \in \mathcal{T}\),
\(\mathcal{N}(E)\) consists of the two nodes that belong to \(E \in \mathcal{E}\)
and \(\mathcal{T}(z)\) for a vertex \(z \in \mathcal{N}\)
consists of all cells \(T \in \mathcal{T}\) with \(z \in \mathcal{N}(T)\). Finally we define $\mathcal{E}^\circ$ as the set of all inner. % edges and $\mathcal{E}^\partial$ as the set of all edges on the boundary $\partial \Omega$.

As usual \(L^2(\Omega)\), \(H^1(\Omega)\), \(H(\mathrm{div},\Omega)\) and \(H(\mathrm{curl},\Omega)\) denote the Sobolev spaces and
\(L^2(\Omega)^2\), \(H^1(\Omega)^2\) denote their vector-valued versions.
Moreover, several discrete function spaces are used throughout the paper. The set \(P_k(T)\)
denotes scalar-valued polynomials up to order \(k\) that live on the simplex \(T \in \mathcal{T}\)
and generate the global piecewise polynomials of order \(k\), i.e.
\begin{align*}
  P_k(\mathcal{T}) := \lbrace q_h \in L^2(\Omega) \, | \, \forall T \in \mathcal{T} : v_h|_T \in P_k(T) \rbrace.
\end{align*}
The function \(\pi_{P_k(\omega)}\) denotes the $L^2$ best approximation into \(P_k(\omega)\)
for any subdomain \(\omega \subset \Omega\).
For approximation of functions in $H(\mathrm{div},\Omega)$ we use the set of Brezzi-Douglas-Marini functions of order \(k \geq 1\) denoted by $BDM_k(\mathcal{T}):= P_k(\mathcal{T})^2 \cap H(\mathrm{div},\Omega)$
%\begin{align*}
% BDM_k(\mathcal{T})  := P_k(\mathcal{T})^2 \cap H(\mathrm{div},\Omega)
%\end{align*}
and the subset of Raviart-Thomas functions of order $k \ge 0$ denoted by $RT_{k}(\mathcal{T})$, see \cite{RTelements}.
%\begin{align*}
%  RT_{k-1}(\mathcal{T}) := \lbrace \vecb{v}_h \in BDM_k(\mathcal{T})  \, | \, \forall T \in \mathcal{T}\, \exists \vecb{a} \in P_{k-1}(T)^{d},
%  b \in P_{k-1}(T) : \vecb{v}_h|_T(\vecb{x}) = \vecb{a}(\vecb{x}) + b(\vecb{x}) \vecb{x} \rbrace.
%\end{align*}
The functions \(I_{RT_k}\) and \(I_{BDM_k}\) denotes the standard interpolator into \(RT_k(\mathcal{T})\) and \(BDM_k(\mathcal{T})\), respectively,
see e.g.\ \cite{MR3097958}. We are also using lowest order N\'ed\'elec (type I) functions $\mathcal{N}_{0}(\mathcal{T})$ defined as the 90 degree rotated lowest order Raviart-Thomas functions with the corresponding interpolator \(I_{\mathcal{N}_{0}}\), see \cite{MR592160}.
%\begin{align*}
%  \mathcal{N}_{o}(\mathcal{T}) := \lbrace \vecb{v}_h \in P_k(\mathcal{T})^1 \cap H(\mathrm{curl},\Omega)  \, | \, \forall T \in \mathcal{T}\, \exists \vecb{a} \in P_{k-1}(T)^{d},
%  b \in P_{k-1}(T) : \vecb{v}_h|_T(\vecb{x}) = \vecb{a}(\vecb{x}) + b(\vecb{x}) \vecb{x} \rbrace.
%\end{align*}

The diameter of a simplex \(T \in \mathcal{T}\) is denoted by \(h_T\) and \(h_\mathcal{T} \in P_0(\mathcal{T})\)
is the local mesh width function, i.e. \(h_\mathcal{T}|_T := h_T\) for all \(T \in \mathcal{T}\). Similarly, \(h_E\)
denotes the diameter of the side \(E \in \mathcal{E}\).
At some point certain bubble functions are used. The cell bubble function on a cell \(T \in \mathcal{T}\) is defined
by \(b_T = \prod_{z \in \mathcal{N}} \varphi_z\) where \(\varphi_z\) is the nodal basis function of the node \(z \in \mathcal{N}\),
i.e.\ \(\varphi_z(z) = 1\) and \(\varphi_z(y) = 0\) for \(y \in \mathcal{N} \setminus \lbrace z \rbrace\). Similarly, the
face bubble \(b_E\) for some side \(E \in \mathcal{E}\) is defined by \(b_E = \prod_{z \in \mathcal{E}} \varphi_z\).
The vector \(\vecb{n}_E\) denotes the unit normal vector of the side \(E \in \mathcal{E}\) with arbitrary but fixed orientation, such that
the normal jump \([ \vecb{v} \cdot \vecb{n}]\) of some function \(\vecb{v}\) has a well-defined sign. The vector \(\vecb{\tau}_E\) denotes a unit tangential vector of \(E\).

\section{Classical and pressure-robust finite element methods}\label{sec:prmethods}
This section recalls classical (usually not presssure-robust) inf-sup stable finite element methods and
a pressure-robust modification of these methods.

\subsection{Classical inf-sup stable finite element methods}
Classical inf-sup stable finite element methods choose ansatz spaces \(V_h \subseteq V = H^1_0(\Omega)^2\) and \(Q_h \subseteq Q = L^2_0(\Omega)\) with the inf-sup property
\begin{align}\label{eqn:infsupconstant}
0 < c_0 := \inf_{q_h \in Q_h \setminus \lbrace 0 \rbrace} \sup_{\vecb{v}_h \in V_h \setminus \lbrace \vecb{0} \rbrace}
\frac{\int_\Omega q_h \mathrm{div} \vecb{v}_h \dx}{\| \nabla \vecb{v}_h \|_{0} \| q_h \|_{L^2}}.
\end{align}
This guarantees surjectivity of the discrete divergence operator
\begin{align*}
  \mathrm{div}_h \vecb{v}_h = \Pi_{Q_h} (\mathrm{div} \vecb{v}_h) := \argmin_{q_h \in Q_h} \| \mathrm{div} \vecb{v}_h - q_h \|_{L^2},
\end{align*}
but also leads to the set of only discretely divergence-free testfunctions
\begin{align*}
  V_{0,h} = \lbrace \vecb{v}_h \in V_h \, \vert \, \mathrm{div}_h \vecb{v}_h = 0 \rbrace ,
\end{align*}
that in general is not a subset of the really divergence-free functions \(\vecb{V}_0\).
Table~\ref{tab:FEMtable} lists some classical finite element methods that are inf-sup stable and are considered in this paper. Besides
the Scott-Vogelius finite element method (on a barycentric refined mesh \(\mathrm{bary}(\mathcal{T})\) to ensure the inf-sup stability \cite{scott:vogelius:conforming,Zhang05}),
all of them are not divergence-free. The space \(P_k^+(\mathcal{T})\) in case of the P2-bubble \cite{crouzeix:raviart:1973} or the mini finite element methods \cite{ABF84}
indicates that the \(P_k(\mathcal{T})\) space is enriched with
the standard cell bubbles \(b_T\) for all \(T \in \mathcal{T}\).
For the Bernardi--Raugel finite element method
normal-weighted face bubbles are added \cite{BernardiRaugel} defining the space $ P_1^\text{BR}(\mathcal{T}) := P_1(\mathcal{T})^2 \cup \lbrace b_E \vecb{n}_E : E \in \mathcal{E} \rbrace. $
%\begin{align*}
%  P_1^\text{BR}(\mathcal{T}) := P_1(\mathcal{T})^2 \cup \lbrace b_E \vecb{n}_E : E \in \mathcal{E} \rbrace. 
%\end{align*}

\begin{table}
  \begin{center}
    \caption{\label{tab:FEMtable}List of classical finite element methods that are considered in this paper and their expected velocity gradient error convergence order \(k\).}
    \footnotesize
    %\begin{tabular}{llll}
      \begin{tabular}{l@{~~} @{~~}l @{~~} @{~~}l @{~~} @{~~}l @{~~} }
  FEM name \& reference \& order & abbr. & $V_h$ & $Q_h$\\
  \hline
  Bernardi--Raugel FEM \cite{BernardiRaugel} (\(k=1\)) & $\mathrm{BR}$ & $P_1^\text{BR}(\mathcal{T}) \cap V$ & $P_{0}(\mathcal{T})$\\
  Mini FEM \cite{ABF84} (\(k = 1\)) & $\mathrm{MINI}$ & $P_1^+(\mathcal{T})^2 \cap V$ & $P_{1}(\mathcal{T}) \cap H^1(\Omega)$\\
  $P_{k+1} \times P_{k-1}$ FEM (\(k \geq 1\)) & $\mathrm{P2P0}$,... & $P_{k+1}(\mathcal{T})^2 \cap V$ & $P_{k-1}(\mathcal{T})$\\
  P2-bubble FEM \cite{crouzeix:raviart:1973} (\(k=2\)) & $\mathrm{P2B}$ & $P_2^+(\mathcal{T})^2 \cap V$ & $P_{1}(\mathcal{T})$\\
  Taylor--Hood FEM \cite{HT74} (\(k \geq 2\))  & $\mathrm{TH}_k$ & $P_k(\mathcal{T})^2 \cap V$ & $P_{k-1}(\mathcal{T}) \cap H^1(\Omega)$\\
  Scott-Vogelius FEM \cite{scott:vogelius:conforming,Zhang05} (k=2) & $\mathrm{SV}$ & $P_{2}(\mathrm{bary}(\mathcal{T}))^2 \cap V$ & $P_{1}(\mathrm{bary}(\mathcal{T}))$\\
  \hline
\end{tabular}
\end{center}
\end{table}

The relaxation of the divergence constraint leads to the usual best approximation error in the pressure ansatz space, i.e.
\begin{align}
  \| \nabla p \|_{V_{0,h}^\star}
  &:= \sup_{\vecb{v}_h \in V_{0,h} \setminus \lbrace \vecb{0} \rbrace} \frac{\int_\Omega p \mathrm{div} \vecb{v}_h \dx}{\| \nabla \vecb{v}_h \|_{L^2}} \label{eqn:divrelax_consistency_error}\\
 & = \sup_{\vecb{v}_h \in V_{0,h} \setminus \lbrace \vecb{0} \rbrace} \frac{\int_\Omega (p - q_h) \mathrm{div} \vecb{v}_h \dx}{\| \nabla \vecb{v}_h \|_{L^2}}
  \leq \min_{q_h \in Q_h} \| p - q_h \|_{L^2},\nonumber
\end{align}
and divergence-free methods are characterised by
\begin{align*}
  V_{0,h} \subseteq \vecb{V}_0 \ \Leftrightarrow \ \| \nabla p \|_{V_{0,h}^\star} = 0 \text{ for all } p \in L^2(\Omega).
\end{align*}

For completeness, we shortly prove the classical a priori error estimate in the following theorem for the discrete solution \(\vecb{u}_h \in \vecb{u}_{D,h} + V_h\) 
(where \(\vecb{u}_{D,h}\) is some suitable approximation of \(\vecb{u}_{D}\))
and \(p_h \in Q_h\) defined by
\begin{align}\label{eqn:classical_solution}
  \nu ( \nabla \vecb{u}_h, \nabla \vecb{v}_h) - (p_h,\mathrm{div} \vecb{v}_h) & = (\vecb{f},\vecb{v}_h) && \text{for all } \vecb{v}_h \in V_h,\\
				      (q_h,\mathrm{div} \vecb{u}_h) & = 0 && \text{for all } q_h \in Q_h,\nonumber
\end{align}
or, equivalently,
\begin{align*}
  \nu ( \nabla \vecb{u}_h, \nabla \vecb{v}_h) = (\vecb{f},\vecb{v}_h) \qquad \text{for all } \vecb{v}_h \in V_{0,h}.
\end{align*}

\begin{theorem}[A priori estimate for classical finite element methods]\label{thm:apriori_classical}
For the discrete velocity $\vecb{u}_h$ of \eqref{eqn:classical_solution}, it holds
\begin{align*}
  \| \nabla(\vecb{u} - \vecb{u}_h) \|^2_{L^2} \leq \! \inf_{\substack{\vecb{v}_h \in V_{0,h},\\ \vecb{u}_h = \vecb{v}_h \text{ on } \partial \Omega}} \! \| \nabla(\vecb{u} - \vecb{v}_h) \|^2_{L^2} + \frac{1}{\nu^2} \| \nabla p \|^2_{V_{0,h}^\star}.
\end{align*}
\end{theorem}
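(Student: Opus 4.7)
The plan is to carry out a standard Strang-type argument based on Galerkin orthogonality over the discretely divergence-free subspace, turning the failure $V_{0,h} \not\subseteq \vecb{V}_0$ into the consistency term $\|\nabla p\|_{V_{0,h}^\star}$ that drives the bound.

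\emph{Step 1 (Galerkin orthogonality).} Every $\vecb{v}_h \in V_{0,h}$ lies in $V$, so I can test the continuous momentum equation against it. Subtracting the discrete equation (using that for $\vecb{v}_h \in V_{0,h}$ the pressure term $(p_h,\mathrm{div}\,\vecb{v}_h)$ vanishes only up to the relaxation~\eqref{eqn:divrelax_consistency_error}) yields, for all $\vecb{v}_h \in V_{0,h}$,
\begin{align*}
   \nu\,(\nabla(\vecb{u}-\vecb{u}_h),\nabla \vecb{v}_h) = (p,\mathrm{div}\,\vecb{v}_h).
\end{align*}

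\emph{Step 2 (Pythagorean expansion).} Fix any competitor $\vecb{v}_h \in V_{0,h}$ with $\vecb{v}_h = \vecb{u}_h$ on $\partial\Omega$, and set $\vecb{w}_h := \vecb{u}_h - \vecb{v}_h$. Then $\vecb{w}_h \in V_{0,h} \cap H^1_0(\Omega)^2$, and since $\vecb{u}-\vecb{v}_h = (\vecb{u}-\vecb{u}_h) + \vecb{w}_h$, expansion of $\|\nabla(\vecb{u}-\vecb{v}_h)\|^2_{L^2}$ together with Step~1 (applied to the cross term with test $\vecb{w}_h$) gives
\begin{align*}
   \|\nabla(\vecb{u}-\vecb{u}_h)\|_{L^2}^2
   = \|\nabla(\vecb{u}-\vecb{v}_h)\|_{L^2}^2 - \tfrac{2}{\nu}(p,\mathrm{div}\,\vecb{w}_h) - \|\nabla \vecb{w}_h\|_{L^2}^2.
\end{align*}

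\emph{Step 3 (Control of the consistency term).} By the definition~\eqref{eqn:divrelax_consistency_error} (applied to $\pm \vecb{w}_h \in V_{0,h}$),
$|(p,\mathrm{div}\,\vecb{w}_h)| \leq \|\nabla p\|_{V_{0,h}^\star}\|\nabla \vecb{w}_h\|_{L^2}$. Inserting this and invoking the quadratic Young-type inequality $2ab - b^2 \leq a^2$ with $a = \nu^{-1}\|\nabla p\|_{V_{0,h}^\star}$ and $b = \|\nabla \vecb{w}_h\|_{L^2}$ yields
\begin{align*}
  \|\nabla(\vecb{u}-\vecb{u}_h)\|_{L^2}^2 \leq \|\nabla(\vecb{u}-\vecb{v}_h)\|_{L^2}^2 + \tfrac{1}{\nu^2}\|\nabla p\|_{V_{0,h}^\star}^2.
\end{align*}
Taking the infimum over admissible $\vecb{v}_h$ concludes the proof.

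\emph{Main obstacle.} There is no real obstacle beyond the choice of the right Young-type estimate: the key observation is that the $-\|\nabla \vecb{w}_h\|^2_{L^2}$ contribution from the Pythagorean expansion exactly absorbs the negative square produced by $2ab - b^2 \le a^2$, which is what allows one to avoid the factor $2$ that the naive triangle inequality would introduce and to land on the clean additive (Pythagorean) form stated in the theorem. A small technical remark is that the admissible set over which the infimum is taken is nonempty; when $\vecb{u}_{D,h}=0$ this is automatic from $\vecb{u}_h \in V_{0,h}$, and in general it follows from the surjectivity of $\mathrm{div}_h$ guaranteed by the discrete inf-sup condition~\eqref{eqn:infsupconstant}.
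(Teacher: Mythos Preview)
Your proof is correct and follows essentially the same strategy as the paper: both rest on the Galerkin orthogonality $\nu(\nabla(\vecb{u}-\vecb{u}_h),\nabla \vecb{v}_h)=(p,\mathrm{div}\,\vecb{v}_h)$ for $\vecb{v}_h\in V_{0,h}$ and then exploit the quadratic structure to obtain the sharp additive bound without a factor~$2$. The only technical difference is that the paper fixes $\vecb{w}_h$ as the $H^1$-orthogonal projection of $\vecb{u}$ onto $V_{0,h}$ (with the right boundary data), which yields an exact Pythagoras identity $\|\nabla(\vecb{u}-\vecb{u}_h)\|^2=\|\nabla(\vecb{u}-\vecb{w}_h)\|^2+\|\nabla(\vecb{u}_h-\vecb{w}_h)\|^2$ and then bounds the second summand directly, whereas you work with an arbitrary competitor and use $2ab-b^2\le a^2$ to absorb the cross term; the two mechanisms are equivalent here.
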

\begin{proof}
  The best approximation \(\vecb{w}_h \in V_{0,h}\) with boundary data \(\vecb{w}_h = \vecb{u}_h\) along \(\partial \Omega\) of \(\vecb{u}\) in the \(H^1\)-seminorm satisfies in particular
  the orthogonality \( ( \nabla (\vecb{u} - \vecb{w}_h), \nabla (\vecb{u}_h - \vecb{w}_h) ) = 0\)
  and therefore allows for the Pythagoras theorem
  \begin{align}
     \| \nabla(\vecb{u} - \vecb{u}_h) \|^2_{L^2}
     &= \| \nabla(\vecb{u} - \vecb{w}_h) \|^2_{L^2} + \| \nabla(\vecb{u}_h - \vecb{w}_h) \|^2_{L^2} \nonumber\\
    &= \!\!\! \inf_{\substack{\vecb{v}_h \in V_{0,h},\\ \vecb{u}_h = \vecb{v}_h \text{ on } \partial \Omega}} \!\!\! \| \nabla(\vecb{u} - \vecb{v}_h) \|^2_{L^2} + \| \nabla(\vecb{u}_h - \vecb{w}_h) \|^2_{L^2}.\label{eqn:Pythagoras_apriori}
  \end{align}
  The same orthogonality allows to estimate
  \begin{align*}
     \| \nabla(\vecb{u}_h - \vecb{w}_h) \|^2_{L^2}
     & = ( \nabla (\vecb{u} - \vecb{u}_h), \nabla (\vecb{u}_h - \vecb{w}_h) )\\
     & = \nu^{-1} ( p, \mathrm{div} (\vecb{u}_h - \vecb{w}_h))
      \leq \nu^{-1} \| \nabla p \|_{V_{0,h}^\star}  \| \nabla(\vecb{u}_h - \vecb{w}_h) \|_{L^2}.\ 
  \end{align*}
\end{proof}
The malicious influence of the pressure-dependent error and the factor \(1/\nu\) in front of it
for classical finite element methods that are not divergence-free
was demonstrated and observed in many benchmark examples, see e.g.\ \cite{LM2015,LM2016,wias:preprint:2177,2016arXiv160903701L}.

\subsection{Pressure-robust finite element methods}
A method is called pressure-robust if its discrete velocity is pressure-independent, i.e.\ if the a priori error estimate for the velocity error
is independent of the pressure.

The key feature behind pressure-robustness for the Stokes problem
is that the testfunctions in the right-hand side are diver\-gence-free.
This can be achieved e.g.\ by fully divergence-free finite element methods (like the Scott-Vogelius finite element method)
or, focused on in this paper, by the application of some reconstruction
operator \(\Pi\) in the right-hand side of the equation (and in further terms in case of the stationary and transient Navier--Stokes equations \cite{LM2016,WIASPreprint2368}).

Hence, the modified pressure-robust finite element method (of any classical pair of inf-sup stable spaces
\(V_h\) and \(Q_h\)) searches
\(\vecb{u}_h \in \vecb{u}_{D,h} + V_h\) and \(p_h \in Q_h\) with
\begin{align}\label{eqn:modified_solution}
  \nu ( \nabla \vecb{u}_h, \nabla \vecb{v}_h) - (p_h,\mathrm{div} \vecb{v}_h) & = (\vecb{f},\Pi \vecb{v}_h) = (\mathbb{P} \vecb{f},\Pi \vecb{v}_h) && \text{for all } \vecb{v}_h \in V_h,\\
				      (q_h,\mathrm{div} \vecb{u}_h) & = 0 && \text{for all } q_h \in Q_h.\nonumber
\end{align}
The operator \(\Pi\) maps discretely divergence-free functions onto exactly divergence-free ones, i.e.
\begin{align}\label{eqn:def_reconstoperator}
  \Pi : V_h \rightarrow H(\mathrm{div},\Omega) \quad \text{with} \quad \mathrm{div} (\Pi\vecb{v}_h) = 0 \quad \text{for all } \vecb{v}_h \in V_{0,h}.
\end{align}
This implicitly defines a modified discrete Helmholtz projector
\begin{align*}
  \mathbb{P}_h^\star \vecb{f} = \argmin_{\vecb{v}_h \in V_{0,h}} \| \vecb{f} - \Pi \vecb{v}_h \|_{L^2}
\end{align*}
with \(\mathbb{P}_h^\star(\nabla q) = 0\) for any \(q \in H^1(\Omega)\) or \(\| \nabla q \|^2_{(\Pi V_{0,h})^\star} = 0\) for all \(q \in L^2(\Omega)\) and so allows for a pressure-independent and locking-free a priori velocity error estimate.

\begin{theorem}[A priori estimate for pressure-robust finite element methods]\label{thm:apriori_probust}
For the solution \(\vecb{u}_h\) of \eqref{eqn:modified_solution} with a reconstruction operator \(\Pi\)
that satisfies \eqref{eqn:def_reconstoperator}, it holds
\begin{align*}
  \| \nabla(\vecb{u} - \vecb{u}_h) \|^2_{L^2} \leq \! \inf_{\substack{\vecb{v}_h \in V_{0,h},\\ \vecb{u}_h = \vecb{v}_h \text{ on } \partial \Omega}} \! \| \nabla(\vecb{u} - \vecb{v}_h) \|^2_{L^2} +  \| \Delta \vecb{u} \circ (1 - \Pi) \|^2_{V_{0,h}^\star}
\end{align*}
with the consistency error
\begin{align}\label{def:Pi_consistency_error}
  \| \Delta \vecb{u} \circ (1 - \Pi) \|^2_{V_{0,h}^\star} :=
  \sup_{\vecb{v}_h \in V_{0,h} \setminus \lbrace \vecb{0} \rbrace} \frac{\int_\Omega \Delta \vecb{u}
  \cdot (1 - \Pi)\vecb{v}_h \dx}{\| \nabla \vecb{v}_h \|_{L^2}}.
\end{align}
Note, that divergence-free methods (like the Scott-Vogelius finite element method)
allow for \(\Pi = 1\) and so attain the same estimate as Theorem~\ref{thm:apriori_classical}.
\end{theorem}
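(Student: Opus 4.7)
The plan is to mimic the Pythagorean argument in the proof of Theorem~\ref{thm:apriori_classical}, with the reconstruction consistency error taking over the role that the pressure-dependent term $\|\nabla p\|_{V_{0,h}^\star}$ played there. First, I introduce $\vecb{w}_h \in V_{0,h}$ as the $H^1$-seminorm best approximation of $\vecb{u}$ among $V_{0,h}$-functions with boundary trace $\vecb{u}_h$. Its Galerkin orthogonality against all test functions in $V_{0,h} \cap H^1_0(\Omega)^2$ is unchanged from the classical setting, so the Pythagoras identity \eqref{eqn:Pythagoras_apriori} applies verbatim and reduces the task to bounding $\|\nabla(\vecb{u}_h - \vecb{w}_h)\|_{L^2}$.

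To obtain this bound, I would test the modified discrete equation \eqref{eqn:modified_solution} and the continuous weak form with the same $\vecb{v}_h := \vecb{u}_h - \vecb{w}_h \in V_{0,h}$. The discrete pressure term vanishes since $\mathrm{div}_h \vecb{v}_h = 0$, the continuous pressure term does not, and after using the orthogonality on the left-hand side the subtraction yields
\begin{align*}
\nu \|\nabla(\vecb{u}_h - \vecb{w}_h)\|_{L^2}^2 = (\vecb{f}, (\Pi - 1)\vecb{v}_h) - (p, \mathrm{div}\, \vecb{v}_h).
\end{align*}
The decisive step is to substitute the strong form $\vecb{f} = -\nu \Delta \vecb{u} + \nabla p$ into the first right-hand term. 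Since $\vecb{v}_h \in H^1_0(\Omega)^2$, integration by parts gives $(\nabla p, \vecb{v}_h) = -(p, \mathrm{div}\, \vecb{v}_h)$, and crucially $(\nabla p, \Pi \vecb{v}_h) = 0$ because $\Pi \vecb{v}_h$ is divergence-free in $H(\mathrm{div},\Omega)$ with vanishing normal trace along $\partial \Omega$. The two pressure contributions therefore cancel, leaving $\nu \|\nabla(\vecb{u}_h - \vecb{w}_h)\|_{L^2}^2 = \nu (\Delta \vecb{u}, (1 - \Pi) \vecb{v}_h)$; dividing by $\nu \|\nabla \vecb{v}_h\|_{L^2}$ and invoking the definition \eqref{def:Pi_consistency_error} gives $\|\nabla(\vecb{u}_h - \vecb{w}_h)\|_{L^2} \leq \|\Delta \vecb{u} \circ (1 - \Pi)\|_{V_{0,h}^\star}$. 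Inserting into \eqref{eqn:Pythagoras_apriori} yields the claim, and in the divergence-free case $\Pi = 1$ the consistency error vanishes identically, recovering Theorem~\ref{thm:apriori_classical}.

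The main subtle point I expect is the vanishing of $(\nabla p, \Pi \vecb{v}_h)$: condition \eqref{eqn:def_reconstoperator} only ensures $\mathrm{div}(\Pi \vecb{v}_h) = 0$, so one additionally needs $\Pi \vecb{v}_h \cdot \vecb{n} = 0$ on $\partial \Omega$ for the boundary term in Green's formula to drop. This is a standard feature of the reconstruction operators considered in the paper (they map into $BDM_k$- or $RT_k$-type subspaces with homogeneous normal trace, reflecting $V_h \subset V$), and is implicit in the preceding equality $(\vecb{f}, \Pi \vecb{v}_h) = (\mathbb{P} \vecb{f}, \Pi \vecb{v}_h)$ and in the asserted identity $\mathbb{P}_h^\star(\nabla q) = 0$; I would invoke it directly rather than reprove it.
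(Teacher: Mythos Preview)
Your proposal is correct and follows essentially the same route as the paper: the Pythagoras identity \eqref{eqn:Pythagoras_apriori} reduces matters to bounding $\|\nabla(\vecb{u}_h-\vecb{w}_h)\|_{L^2}$, and then testing both the continuous and the modified discrete equations with $\vecb{v}_h=\vecb{u}_h-\vecb{w}_h$, substituting $\vecb{f}=-\nu\Delta\vecb{u}+\nabla p$, and using $(\nabla p,\Pi\vecb{v}_h)=0$ leaves exactly $(\Delta\vecb{u},(1-\Pi)\vecb{v}_h)$. The only cosmetic difference is that the paper integrates $(\nabla\vecb{u},\nabla\vecb{v}_h)$ by parts to $-(\Delta\vecb{u},\vecb{v}_h)$ at the outset rather than carrying the term $(p,\mathrm{div}\,\vecb{v}_h)$ from the weak form and cancelling it later; your observation about the implicit zero-normal-trace assumption on $\Pi\vecb{v}_h$ is well taken and matches what the paper invokes without further comment.
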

\begin{proof}
Similar to the proof of Theorem~\ref{thm:apriori_classical}, it remains to estimate
the second term on the right-hand side of \eqref{eqn:Pythagoras_apriori}.
  Using the orthogonality \( ( \nabla (\vecb{u} - \vecb{w}_h), \nabla (\vecb{u}_h - \vecb{w}_h) ) = 0\) we get  $\| \nabla(\vecb{u}_h - \vecb{w}_h) \|^2_{L^2} =
       ( \nabla (\vecb{u} - \vecb{u}_h), \nabla (\vecb{u}_h - \vecb{w}_h) )$. The insertion of \(\vecb{f} = - \nu \Delta \vecb{u} + \nabla p\)
  and \(\int_\Omega \nabla p \cdot \Pi (\vecb{u}_h - \vecb{w}_h) = 0\) (thanks to \eqref{eqn:def_reconstoperator}) then further shows
  \begin{align*}
       ( \nabla (\vecb{u} - \vecb{u}_h), \nabla (\vecb{w}_h - \vecb{u}_h) )
     & = ( \Delta \vecb{u} , \vecb{u}_h - \vecb{w}_h ) + \frac{1}{\nu} ( \vecb{f} , \Pi (\vecb{u}_h - \vecb{w}_h) ) \\
     & = ( \Delta \vecb{u} , \vecb{u}_h - \vecb{w}_h ) + ( \Delta \vecb{u} , \Pi (\vecb{u}_h - \vecb{w}_h) ) \\
     &\leq \| \Delta \vecb{u} \circ (1 - \Pi) \|_{V_{0,h}^\star} \| \nabla(\vecb{u}_h - \vecb{w}_h) \|_{L^2}.
  \end{align*}
  This concludes the proof.
\end{proof}

To gain optimal convergence behavior of \eqref{def:Pi_consistency_error}, the reconstruction operator additionally has to satisfy another important
property that concerns the consistency error of the modified method.
For a finite element method with optimal \(H^1\)-velocity convergence order \(k\) 
and pressure \(L^2\)-convergence order \(q\) we require, for all \(\vecb{v}_h \in V_{0,h}\),
\begin{align}
  (\vecb{g}, (1 - \Pi) \vecb{v}_h) \label{eq:orthogonality_on_polynomials}
  & \lesssim \| h_\mathcal{T}^{q+1} D^{q-1}\vecb{g} \|_{L^2(\Omega)} \| \nabla \vecb{v}_h \|_{L^2} \quad && \text{for any } \vecb{g} \in H^{q-1}(\Omega)^2.
\end{align}
In particular, for \(\Delta \vecb{u} \in H^{q-1}(\Omega)^2\), this property directly implies
\begin{align}\label{eq:consistency_error}
  \|\Delta \vecb{u} \circ (1-\Pi)\|_{V_{0,h}^\star}
  \lesssim \| h_\mathcal{T}^{q+1} D^{q-1} \Delta \vecb{u} \|_{L^2(\Omega)}
\end{align}
and so ensures that the modified method still converges with the optimal order. 

To be more precise, we require that the reconstruction operator satisfies some local splitting and orthogonality property that can
be formulated by
  \begin{align}\label{eqn:reconstop_local_orthogonalities}
     (1 - \Pi) \vecb{v}_h & = \sum_{K \in \mathcal{K}} \vecb{\sigma}_K|_{K} \ \text{with} \ \| \vecb{\sigma}_K \|_{L^2(K)}
     \lesssim h_K \| \nabla \vecb{v}_h \|_{L^2(K)} \ \text{and}\\
     \int_{K} \vecb{\sigma}_K \cdot \vecb{g}_h \dx & = 0 \quad \text{for all } \vecb{g}_h \in P_{q-1}(K),\nonumber
  \end{align}
with $h_K := \mathrm{diam}(K)$.
Reconstruction operators \(\Pi\) with the properties \eqref{eqn:def_reconstoperator}-\eqref{eq:orthogonality_on_polynomials}
were already successfully designed for finite element methods with discontinuous pressure spaces, like the nonconforming Crouzeix-Raviart
finite element method \cite{Lin14,BLMS15}, or the Bernardi--Raugel \cite{LM2016} and \(P^2\)-bubble finite element methods \cite{LMT15,LM2016}.
In all these cases \(\Pi\) can be chosen as standard BDM interpolators with
elementwise-orthogonality with resepect to \(\mathcal{K} = \mathcal{T}\).
Recently, also for Taylor--Hood and mini finite element methods (with $k=q$) of arbitrary order such an operator was found \cite{2016arXiv160903701L}. For these
vertex-based constructions Property \eqref{eqn:reconstop_local_orthogonalities} holds with \(\mathcal{K} = \lbrace \omega_z : z \in \mathcal{N} \rbrace\).
Table~\ref{tab:RECtable} summarizes
suitable reconstruction operators, that satisfy the needed properties, for the methods from Table~\ref{tab:FEMtable}.

\begin{table}
  \begin{center}
    \caption{\label{tab:RECtable}Suitable reconstruction operators $\Pi$ for the classical FEMs of Table~\ref{tab:FEMtable}.}
    \footnotesize
    %\begin{tabular}{lll}
            \begin{tabular}{l@{~~} @{~~}l @{~~} @{~~}l @{~~} }
  FEM name & abbr. & $\Pi$ \& reference\\
  \hline
  Bernardi--Raugel FEM & $\mathrm{BR}$ & $I_{BDM_1}$, see \cite{LM2016}\\
  Mini FEM & $\mathrm{MINI}$ & see \cite{2016arXiv160903701L} \\
  $P_{k+1} \times P_{k-1}$ FEM (\(k \geq 1\))& $\mathrm{P2P0}$, $\mathrm{P3P1}$, ... & $I_{BDM_k}$ \\
  P2-bubble FEM & $\mathrm{P2B}$ &  $I_{BDM_2}$, see \cite{LMT15,LM2016}\\
  Taylor--Hood FEM (\(k \geq 1\)) & $\mathrm{TH}_k$ & see \cite{2016arXiv160903701L}\\
  Scott-Vogelius FEM & $\mathrm{SV}$ & $1$ (identity)\\
  \hline
\end{tabular}
  \end{center}
\end{table}

\section{(Limits of) Standard a posteriori residual-based error bounds}\label{sec:standard_estimates}
This section states and proves a posteriori error bounds for the classical and the pressure-robust finite element methods
by classical means. The resulting bounds reflect the pressure-robustness but are, in case of a
pressure-robust finite element method, rather unhandy as their efficiency relies on a good approximation of \(\mathbb{P} \vecb{f}\).
To stress this observation, the analysis is performed in some detail.

First, we define the residual for the Stokes equations by
\begin{align*}
   r(\vecb{v}) &:= \int_\Omega \vecb{f} \cdot \vecb{v} \dx - \int_\Omega \nu \nabla \vecb{u}_h : \nabla \vecb{v}\dx \quad \text{for all } \vecb{v} \in \vecb{V}_0.
\end{align*}
The dual norm of the residual \(r\) with respect to \(\vecb{V}_0\) defined by
\begin{align*}
  \| r \|_{\vecb{V}_0^\star} := \sup_{\vecb{v}_h \in V_{0} \setminus \lbrace \vecb{0} \rbrace} \frac{r(\vecb{v})}{\| \nabla \vecb{v} \|_{L^2}}
\end{align*}
enters the generalised error bound as the central object of a posteriori error estimation.
The error analysis also assumes the existence of a Fortin interpolation operator \(I\)
that maps from $\vecb{V}_0$ to $V_{0,h}$ and has first-order approximation properties and is \(H^1\)-stable, i.e,
for all \(\vecb{v} \in \vecb{V}_0\), it holds
\begin{align}
  \| (1- I) \vecb{v} \|_{L^2(T)} & \lesssim h_T \| \nabla \vecb{v} \|_{L^2(\omega_T)} \quad \text{for all } T\in \mathcal{T},\label{eqn:Fortinprops1} \\
  %\| (1- I) \vecb{v} \|_{L^2(E)} & \lesssim h_E^{1/2} \| \nabla \vecb{v} \|_{L^2(\omega_E)} \quad %\text{for all } E\in \mathcal{E},\label{eqn:Fortinprops1b} \\
  \| \nabla I\vecb{v} \|_{L^2} & \lesssim \| \nabla \vecb{v} \|_{L^2} \label{eqn:Fortinprops2} .
\end{align}
For many classical finite element methods such an operator can be found in \cite{MR3097958}.
For its existence and design in the Taylor--Hood case we refer to \cite{Mardal2013,MR3272546}.
Some more details are given in Section~\ref{sec:assumptionproof} below.

The following theorem establishes a general estimate similar to \cite[Theorem 5.1]{MR2995179}
and can be extended to nonconforming methods in a similar fashion. However, our focus is
on the consistency errors \eqref{eqn:divrelax_consistency_error} and \eqref{def:Pi_consistency_error} and the dependency on \(\nu\).

\begin{theorem}\label{thm:errorbounds}
The following velocity error estimates hold:
\begin{itemize}
 \item[(a)]
  In general, the \(L^2\) gradient error can be estimated by
\begin{align*}
\| \nabla(\vecb{u} - \vecb{u} _h) \|_{L^2}^2 \leq \nu^{-2} \| r \|_{\vecb{V}_0^\star}^2 + 1/c_0^2 \| \mathrm{div} \vecb{u}_h \|_{L^2}^2.
\end{align*}
 \item[(b)]
  For the discrete solution \(\vecb{u}_h\) of the modified method \eqref{eqn:modified_solution}
  (or of the classical method \eqref{eqn:classical_solution} with \(\Pi = 1\)),
  the dual norm of the residual \(r\) can be bounded by
  \begin{align*}
    \| r \|_{\vecb{V}_0^\star} \lesssim \eta_\text{class}(\sigma,q) := 
    \eta_\text{vol}(\sigma,q)
    + \eta_\text{avg}(\sigma)
    + \eta_\text{jump}(\sigma)
    + \eta_\text{cons,1}(\sigma)
    + \eta_\text{cons,2}(q)
  \end{align*}
  for arbitrary \(q \in H^1(\Omega)\) and \(\sigma \in H^1(\mathcal{T})^{2 \times 2}\). The
  subterms read
  \begin{align*}
     \eta_\text{vol}(\sigma,q) & := \| h_{\mathcal{T}}(\vecb{f} - \nabla q + \nu \mathrm{div}_h(\sigma)) \|_{L^2}\\
     \eta_\text{avg}(\sigma) & := \nu \| \nabla \vecb{u}_h - \sigma \|_{L^2}\\
     \eta_\text{jump}(\sigma) & := \| h_\mathcal{E}^{1/2} [\nu \sigma \vecb{n}_E] \|_{L^2(\bigcup \mathcal{E}^\circ)}\\
     \eta_\text{cons,1}(\sigma) & := \| \nu \mathrm{div}_h (\sigma) \circ (1 - \Pi) \|_{V_{0,h}^\star}\\
     \eta_\text{cons,2}(q) & := \| \nabla q \|_{(\Pi V_{0,h})^\star},\hspace{1cm} 
  \end{align*}
  Note that \(q\) acts as a conforming approximation of the pressure \(p\) and \(\sigma\) acts as an
  approximation of \(\nabla \vecb{u}\)  (in particular \(\sigma = \nabla \vecb{u}_h\) is allowed).
%  \item[(c)]
%   The consistency error term can be bounded by the volume term, i.e.
%     \begin{align*}
%        \| \mathbb{P} \vecb{f} \circ (1 - \Pi) \|_{V_{0,h}^\star} \lesssim \| h_T(\mathbb{P} \vecb{f} + \nu \Delta_\mathcal{T} \vecb{u}_h) \|_{L^2}.
%     \end{align*}
%   Note that in all terms \(\mathbb{P} \vecb{f}\) can be replaced by \(\vecb{f} - \nabla q\) for any \(q \in H^1(\Omega)\) to approximate the Helmholtz projector.
\end{itemize}
\end{theorem}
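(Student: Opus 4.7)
The plan is to treat the two parts separately, both via duality between the residual and the error.

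For part (a), I would decompose $\vecb{u}-\vecb{u}_h$ using the $H^1$-orthogonal complement to $\vecb{V}_0$: choose $\vecb{w}\in V$ of minimal $H^1$-seminorm with $\mathrm{div}\vecb{w}=-\mathrm{div}\vecb{u}_h$ (which exists since $\mathrm{div}\vecb{u}_h\in L^2_0(\Omega)$ and the continuous divergence surjectivity is controlled by some inf-sup constant $\beta\geq c_0$). Minimality forces $(\nabla\vecb{w},\nabla\vecb{v}_0)=0$ for all $\vecb{v}_0\in\vecb{V}_0$, so $\vecb{z}:=\vecb{u}-\vecb{u}_h-\vecb{w}\in\vecb{V}_0$ (for matching discrete boundary data). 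Pythagoras then gives $\|\nabla(\vecb{u}-\vecb{u}_h)\|_{L^2}^2=\|\nabla\vecb{z}\|_{L^2}^2+\|\nabla\vecb{w}\|_{L^2}^2$. Testing $\nu(\nabla(\vecb{u}-\vecb{u}_h),\nabla\vecb{z})=r(\vecb{z})$ yields $\nu\|\nabla\vecb{z}\|_{L^2}\leq\|r\|_{\vecb{V}_0^\star}$, while the inf-sup right-inverse of $\mathrm{div}$ gives $\|\nabla\vecb{w}\|_{L^2}\leq c_0^{-1}\|\mathrm{div}\vecb{u}_h\|_{L^2}$; squaring and adding closes (a).

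For part (b) I fix $\vecb{v}\in\vecb{V}_0$ and engineer $r(\vecb{v})$ into exactly five pieces. Because $\vecb{v}$ is divergence-free with zero trace, $(\nabla q,\vecb{v})=0$, so $r(\vecb{v})=(\vecb{f}-\nabla q,\vecb{v})-\nu(\nabla\vecb{u}_h,\nabla\vecb{v})$. The Fortin image $I\vecb{v}\in V_{0,h}$ satisfies $\mathrm{div}_h I\vecb{v}=0$, hence $(p_h,\mathrm{div} I\vecb{v})=0$, and testing \eqref{eqn:modified_solution} with $I\vecb{v}$ gives the key identity $\nu(\nabla\vecb{u}_h,\nabla I\vecb{v})=(\vecb{f},\Pi I\vecb{v})$. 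Subtracting this zero from $r(\vecb{v})$, rearranging, then adding and subtracting $\nu\mathrm{div}_h\sigma$ and a piecewise integration by parts against $\sigma$, I expect the algebra to collapse into
\begin{align*}
r(\vecb{v})=\,&(\vecb{f}-\nabla q+\nu\mathrm{div}_h\sigma,\vecb{v}-\Pi I\vecb{v})-\nu(\nabla\vecb{u}_h-\sigma,\nabla(\vecb{v}-I\vecb{v}))\\
&-\sum_{E\in\mathcal{E}^\circ}\int_E[\nu\sigma\vecb{n}_E]\cdot(\vecb{v}-I\vecb{v})\ds-\nu(\mathrm{div}_h\sigma,(1-\Pi)I\vecb{v})-(\nabla q,\Pi I\vecb{v}),
\end{align*}
each line already matching one of the announced $\eta$-terms.

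The five estimations are then routine. The first line uses the compound interpolation bound $\|\vecb{v}-\Pi I\vecb{v}\|_{L^2(K)}\lesssim h_K\|\nabla\vecb{v}\|_{L^2(\omega_K)}$, obtained by combining the Fortin estimate \eqref{eqn:Fortinprops1} with the local estimate in \eqref{eqn:reconstop_local_orthogonalities}, and yields $\eta_\text{vol}(\sigma,q)\|\nabla\vecb{v}\|_{L^2}$. The second and third are Cauchy–Schwarz combined with the $H^1$-stability \eqref{eqn:Fortinprops2} and a scaled trace inequality plus \eqref{eqn:Fortinprops1}, giving $\eta_\text{avg}(\sigma)\|\nabla\vecb{v}\|_{L^2}$ and $\eta_\text{jump}(\sigma)\|\nabla\vecb{v}\|_{L^2}$. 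The last two match the definitions of $\eta_\text{cons,1}(\sigma)$ and $\eta_\text{cons,2}(q)$ against the test function $I\vecb{v}\in V_{0,h}$, costing only another factor from \eqref{eqn:Fortinprops2}. Dividing by $\|\nabla\vecb{v}\|_{L^2}$ and taking the supremum over $\vecb{v}\in\vecb{V}_0$ finishes (b).

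The main obstacle is the bookkeeping in the rewriting step: one has to arrange the subtractions so that the only leftover pairings of $(1-\Pi)I\vecb{v}$ are with $\nu\mathrm{div}_h\sigma$ (feeding $\eta_\text{cons,1}$) and not with $\vecb{f}-\nabla q$ (which would inflate the volume term), and so that the pressure $q$ appears only inside the single term $(\nabla q,\Pi I\vecb{v})$ that matches $\eta_\text{cons,2}$. The two cancellations $(\nabla q,\vecb{v})=0$ (continuous) and the near-vanishing of $(\nabla q,\Pi\vecb{v}_h)$ (discrete, exact when $\Pi V_{0,h}$ consists of divergence-free fields with vanishing normal trace) are the crucial algebraic pivots; the rest of the argument is bookkeeping around them.
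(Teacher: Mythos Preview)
Your proposal is correct and follows essentially the same route as the paper: the orthogonal splitting in (a) into a $\vecb{V}_0$-part and its complement is exactly the decomposition $\nu\nabla(\vecb{u}-\vecb{u}_h)=\nu\nabla\vecb{z}+y$ the paper uses (with $y=\nu\nabla\vecb{w}$), and the five-term rewriting in (b) matches the paper's manipulation term by term (up to harmless sign conventions). The only loose end---that the continuous inf-sup constant $\beta$ dominates the discrete $c_0$---is not justified in the paper either, which simply cites \cite{MR2164088} for that step.
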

\begin{proof}
The proof of (a) can be found in \cite{MR2164088,ccmerdon:nonconforming2} and is based on the decomposition
%\begin{align*}
 $  \nu \nabla (\vecb{u} - \vecb{u} _h) = \nu \nabla \vecb{z} + y$
%\end{align*}
 into some \(\vecb{z} \in \vecb{V}_0\) and some remainder
 \begin{align*}
y \in Y := \left\lbrace y \in L^2(\Omega)^{d \times d} \, \vert \, \int_\Omega y : \nabla v \dx = 0 \text{ for all } v \in \vecb{V}_0 \right\rbrace.
 \end{align*}
The orthogonality relations between \(\vecb{z}\) and \(y\) lead to
\begin{align*}
\| \nu^{1/2} \nabla(\vecb{u} - \vecb{u} _h) \|_{L^2}^2 = \|\nu^{1/2} \nabla \vecb{z} \|_{L^2}^2 + \| \nu^{-1/2} y \|_{L^2}^2.
\end{align*}
Since
\begin{align*}
  \|\nu^{1/2} \nabla \vecb{z} \|_{L^2}^2 = \int_\Omega \nu \nabla(\vecb{u} - \vecb{u}_h) : \nabla \vecb{z} \dx = r(\vecb{z}) \leq \nu^{-1/2} \| r \|_{\vecb{V}_0^\star} \|\nu^{1/2} \nabla \vecb{z} \|_{L^2},
\end{align*}
one arrives at
%\begin{align*}
$ \|\nu^{1/2} \nabla \vecb{z} \|_{L^2} \leq \nu^{-1/2} \| r \|_{\vecb{V}_0^\star}$. 
%\end{align*}
This is in fact an identity, since
\begin{align*}
r(\vecb{v}) = \int_\Omega \nu \nabla \vecb{z} : \nabla \vecb{v} \dx \leq \nu^{1/2} \|\nu^{1/2} \nabla \vecb{z} \|_{L^2} \|\nabla \vecb{v} \|_{L^2}
\quad \text{for any } \vecb{v} \in \vecb{V}_0.
\end{align*}
Furthermore, there exists some \(w \in L^2(\Omega)\) such that (see \cite{MR2164088})
\begin{align*}
  \| \nu^{-1/2} y \|_{L^2}^2
  & = \int_\Omega \nabla (\vecb{u} - \vecb{u} _h) : y \dx
  = \int_\Omega w \mathrm{div} (\vecb{u} - \vecb{u_h}) \dx \\
  & \leq \| w \|_{L^2} \| \mathrm{div} (\vecb{u} - \vecb{u_h})  \|_{L^2}
  \leq \nu^{1/2}/c_0 \| \nu^{-1/2} y \|_{L^2} \| \mathrm{div} \vecb{u}_h \|_{L^2}.
\end{align*}
Hence, \(\| \nu^{-1/2} y \|_{L^2} \leq \nu^{1/2}/c_0 \| \mathrm{div} \vecb{u}_h \|_{L^2}\).
This concludes the proof of (a) and it remains to prove (b).

Given any \(\vecb{v} \in \vecb{V}_0\),  subtraction of its Fortin interpolation \(I \vecb{v} \in V_{0,h}\)
and \eqref{eqn:modified_solution} lead to
\begin{align*}
  r(\vecb{v}) &= \int_\Omega \vecb{f} \cdot \vecb{v} \dx - \int_\Omega \nu \nabla \vecb{u}_h : \nabla \vecb{v} \dx \\
              & = \int_\Omega \vecb{f} \cdot (1 - \Pi I) \vecb{v} \dx - \int_\Omega \nu \nabla \vecb{u}_h : \nabla (1 - I) \vecb{v} \dx \nonumber \\
              & = \int_\Omega \vecb{f} \cdot (1 - \Pi I) \vecb{v} \dx
  - \int_\Omega \nu \sigma : \nabla (1- I)\vecb{v} \dx - \int_\Omega \nu (\nabla \vecb{u}_h - \sigma): \nabla (1- I)\vecb{v}\dx \nonumber\\
%             & = \int_\Omega (\vecb{f} + \nu \Delta_\mathcal{T} \vecb{u}_h) \cdot (1 - \Pi I) \vecb{v} \dx + \sum_T \int_{\partial T} (\nu \nabla \vecb{u}_h \vecb{n}) \cdot (1 - I) \vecb{v} \ds
%               + \int_\Omega \nu \Delta_\mathcal{T} \vecb{u}_h \cdot (1 - \Pi) I \vecb{v} \nonumber\\
	      & = \int_\Omega (\vecb{f} -\nabla q + \nu \mathrm{div}_h \sigma) \cdot (1 - \Pi I) \vecb{v} \dx + \sum_T \int_{\partial T} (\nu \sigma \vecb{n}) \cdot (1 - I) \vecb{v} \ds \nonumber\\
	      &  - \int_\Omega \nu (\nabla \vecb{u}_h - \sigma): \nabla (1- I)\vecb{v} \dx + \int_\Omega \nu \mathrm{div}_h \sigma \cdot (1 - \Pi) I \vecb{v} \dx + \int_\Omega \nabla q \cdot \Pi I \vecb{v} \dx. \nonumber   
\end{align*}
In the last step it was used that \(\int \nabla q \cdot \vecb{v} \dx = 0\) for any \(q \in H^1(\Omega)\), since \(\vecb{v} \in \vecb{V}_0\) is divergence-free.
The third integral is estimated by a Cauchy inequality and the \(H^1\)-stability of \(I\).
The last two integrals are estimated by discrete dual norms and the \(H^1\)-stability of \(I\).
Properties \eqref{eqn:Fortinprops1}-\eqref{eqn:Fortinprops2} of \(I\) and \eqref{eqn:reconstop_local_orthogonalities} of \(\Pi\) show
\begin{align*}
  \| h_\mathcal{T}^{-1} (1 - \Pi I) \vecb{v} \|_{L^2(T)}
  & \leq \| h_\mathcal{T}^{-1} (1 - I) \vecb{v} \|_{L^2(T)} + \| h_\mathcal{T}^{-1} (1 - \Pi) I \vecb{v} \|_{L^2(T)}\\
  & \lesssim \| \nabla \vecb{v} \|_{L^2(\omega_T)} + \|h_\mathcal{T} \nabla (I \vecb{v}) \|_{L^2(\omega_T)}
  \lesssim \|\nabla \vecb{v} \|_{L^2(\omega_T)}
\end{align*}
and hence together with some Cauchy inequalities
\begin{align*}
 \int_\Omega (\vecb{f} -\nabla q + \nu \Delta_\mathcal{T} \vecb{u}_h) &\cdot (1 - \Pi I) \vecb{v} \dx \\
 & \leq \sum_{T \in \mathcal{T}} \| h_\mathcal{T}(\vecb{f} -\nabla q + \nu \Delta_\mathcal{T} \vecb{u}_h) \|_{L^2(T)} \| h_\mathcal{T}^{-1} (1 - \Pi I) \vecb{v} \|_{L^2(T)}\\
 & \lesssim \eta_\text{vol}(\sigma,q) \left(\sum_{T \in \mathcal{T}} \|\nabla \vecb{v} \|_{L^2(\omega_T)}^2\right)^{1/2} \lesssim \eta_\text{vol}(\sigma,q) \|\nabla \vecb{v} \|_{L^2}.
\end{align*}
Similar arguments hold for the edge-based integral using a trace inequality and Properties \eqref{eqn:Fortinprops1}-\eqref{eqn:Fortinprops2}, i.e.
\begin{align*}\sum_{E \in \mathcal{E}^\circ}
  \int_E [\nu \sigma\vecb{n}] \cdot (\vecb{v} - I \vecb{v}) \ds
  & \leq \sum_{E \in \mathcal{E}^\circ} \| [\nu \sigma\vecb{n}] \|_{L^2(E)} \|\vecb{v} - I \vecb{v}\|_{L^2(E)}\\
  & \leq \sum_{E \in \mathcal{E}^\circ} h_E^{1/2} \| [\nu \sigma\vecb{n}] \|_{L^2(E)} \|\nabla \vecb{v}\|_{L^2(\omega_E)}\\
  & \leq \|h_\mathcal{E}^{1/2}  [\nu \sigma\vecb{n}] \|_{L^2( \mathcal{E}^\circ)} \| \nabla \vecb{v} \|_{L^2} = \eta_\text{jump}(\sigma) \| \nabla \vecb{v} \|_{L^2}.
\end{align*}
This concludes the proof of (b).
\end{proof}

\begin{remark}
Some remarks are in order:
\begin{itemize}[fullwidth]
 \item
The existence of \(w\) in the last part of the proof of (a) needs \(\vecb{u} - \vecb{u}_h \in H^1_0(\Omega)^2\).
In case of inhomogeneous Dirichlet boundary data or nonconforming discretisations \(\vecb{u}_h \notin H^1(\Omega)^2\), one can introduce a function \(\vecb{w} \in H^1(\Omega)\)
(e.g.\ the harmonic extension of the boundary data error \(\vecb{u}_D - \vecb{u}_{D,h}\) \cite{MR2101782}
plus some \(H^1\)-conforming boundary-data preserving interpolation of \(\vecb{u}_h\) \cite{MR2164088,MR2995179,ccmerdon:nonconforming2})
with \(\vecb{w} = \vecb{u}_D\) along \(\partial \Omega\) and attains \(\vecb{u} - \vecb{w} \in H^1_0(\omega)\). Then, a modified estimation of the second term yields
\begin{align*}
   \| \nu^{-1/2} y \|_{L^2} \leq \nu^{1/2}/c_0 \| \mathrm{div} \vecb{w} \|_{L^2} + \nu^{1/2} \| \nabla_h ( \vecb{u}_h - \vecb{w}) \|_{L^2}.
\end{align*}
 \item 
 The term \(\eta_\text{cons,1}(\sigma) = \| \nu \Delta_\mathcal{T} (\mathrm{div} \sigma) \circ (1 - \Pi) \|_{V_{0,h}^\star}\) only appears for \(\Pi \neq 1\)
 as in the novel pressure-robust methods and equals the consistency error 
 \eqref{def:Pi_consistency_error} for \(\sigma = \nabla \vecb{u}_h\).
 \item Recall that \(\eta_\text{cons,2}(q) = 0\) if \(\Pi\) satisfies \eqref{eqn:def_reconstoperator} or if \(q \in Q_h\) and \(\Pi = 1\).
\end{itemize}
\end{remark}

The following theorem studies the efficiency of the contributions of the standard residual error estimators from Theorem~\ref{thm:errorbounds}
for the explicit choice \(\sigma = \nabla \vecb{u}_h\).

\begin{theorem}[Efficiency for \(\sigma = \nabla \vecb{u}_h\)]\label{thm:efficiency}
 For \(\sigma = \nabla \vecb{u}_h\) all terms of the residual-based error estimator of Theorem~\ref{thm:errorbounds} are efficient
 possibly up to data oscillations
\begin{align*}
  \mathrm{osc}_k(\bullet,\mathcal{T})^2 := \sum_{T \in \mathcal{T}} h_T^2 \| (1 - \pi_{P_k(T)}) \bullet \|_{L^2(T)}^2
\end{align*}
and up to pressure contributions (either from the lack of pressure-robustness or from the quality of the approximation of \(p\) by \(q\))
in the following sense.
 \begin{itemize}
  \item[(a)] For the divergence term it holds \(\| \mathrm{div} \vecb{u}_h \|_{L^2} \leq \| \nabla (\vecb{u} - \vecb{u}_h) \|_{L^2}.\)
  \item[(b)] For the volume term \(\eta_\text{vol}(q,\nabla \vecb{u}_h)\), it holds
    \begin{align*}
      \nu^{-1} \| h_T(\vecb{f} - \nabla q + \nu \Delta_\mathcal{T} \vecb{u}_h) \|_{L^2}
       &\lesssim \| \nabla (\vecb{u} - \vecb{u}_h) \|_{L^2} \\
       & \quad + \nu^{-1} \left(\| p - q \|_{L^2} + \mathrm{osc_k}(\vecb{f} - \nabla q,\mathcal{T})\right).
    \end{align*}  
  \item[(c)] For the jump term \(\eta_\text{jump}(\nabla \vecb{u}_h)\), it holds
    \begin{align*}
      \nu^{-1} \| h_\mathcal{E}^{1/2} [\nu \nabla \vecb{u}_h \vecb{n}_E] \|_{L^2(\bigcup \mathcal{E^\circ})}  \lesssim \| \nabla (\vecb{u} - \vecb{u}_h) \|_{L^2} + \nu^{-1} \mathrm{osc_k}(\vecb{f} - \nabla p,\mathcal{T}).
    \end{align*}
  \item[(d)] If \(\Pi\) satisfies \eqref{eqn:reconstop_local_orthogonalities}, the consistency error \(\eta_\text{cons,1}(\nabla \vecb{u})\) is efficient in the sense
  \begin{align*}
\| \Delta_\mathcal{T} \vecb{u}_h \circ (1 - \Pi) \|_{V_{0,h}^\star} 
\lesssim \| \nabla (&\vecb{u} - \vecb{u}_h) \|_{L^2} \\ &+ \nu^{-1} \left(\mathrm{osc}_{k}(\vecb{f} - \nabla p,\mathcal{T}) + \mathrm{osc}_{q-1}(\vecb{f} - \nabla p,\mathcal{K}) \right)
\end{align*}  
  \item[(e)] For the consistency error \(\eta_\text{cons,2}(q)\), it holds
    \begin{align*}
       \| \nabla q \|_{(\Pi V_{0,h})^\star} & \leq
       \begin{cases}
          0 & \text{if } \Pi \text{ satisfies } \eqref{eqn:def_reconstoperator},\\
          \min_{q_h \in Q_h} \| q - q_h \|_{L^2} & \text{if } \Pi = 1 \text{ without } \eqref{eqn:def_reconstoperator}.
       \end{cases}
    \end{align*}
 \end{itemize}
\end{theorem}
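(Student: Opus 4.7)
My plan is to address the five items in the stated order using the classical residual a posteriori toolkit (bubble functions, inverse estimates, strong form of the PDE), with the real work concentrated in item (d).

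Part (a) is immediate from $\mathrm{div}\vecb{u}=0$, which gives $\mathrm{div}\vecb{u}_h=\mathrm{div}(\vecb{u}_h-\vecb{u})$, combined with the pointwise inequality $|\mathrm{div}\vecb{v}|\le\sqrt{2}\,|\nabla\vecb{v}|$. For part (b) I would use the standard cell-bubble argument: set $\vecb{r}_T:=\vecb{f}-\nabla q+\nu\Delta_{\mathcal{T}}\vecb{u}_h$ and $\vecb{r}_{T,h}:=\pi_{P_k(T)}(\vecb{f}-\nabla q)+\nu\Delta\vecb{u}_h|_T$, and test with $\vecb{v}_T:=b_T\vecb{r}_{T,h}$ extended by zero outside $T$. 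Norm equivalence on polynomials bounds $\|\vecb{r}_{T,h}\|_{L^2(T)}^2$ by $\int_T\vecb{r}_{T,h}\cdot\vecb{v}_T\dx$; integration by parts of the $\nu\Delta\vecb{u}_h$ term together with the weak Stokes identity $\nu(\nabla\vecb{u},\nabla\vecb{v}_T)-(p,\mathrm{div}\vecb{v}_T)=(\vecb{f},\vecb{v}_T)$ then yields
\begin{equation*}
\int_T\vecb{r}_{T,h}\cdot\vecb{v}_T\dx = \nu\!\int_T\nabla(\vecb{u}-\vecb{u}_h):\nabla\vecb{v}_T\dx - \!\int_T(p-q)\mathrm{div}\vecb{v}_T\dx + \!\int_T(\vecb{r}_{T,h}-\vecb{r}_T)\cdot\vecb{v}_T\dx.
\end{equation*}
Inverse inequalities $\|\nabla\vecb{v}_T\|_{L^2(T)}\lesssim h_T^{-1}\|\vecb{r}_{T,h}\|_{L^2(T)}$, multiplication by $h_T$, and a triangle inequality on the oscillation $\vecb{r}_T-\vecb{r}_{T,h}$ close the bound cell by cell.

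Part (c) mirrors (b) with the standard edge bubble: let $\vecb{J}_{E,h}$ be a polynomial approximation of the jump, extend $b_E\vecb{J}_{E,h}$ polynomially into the patch $\omega_E$, and integrate by parts on each neighbouring cell. The interior volume contributions produced are exactly the cell residual already controlled in (b) (with $q=p$), so its efficiency is inherited, leaving only the desired jump norm. Part (e) is direct: if $\Pi$ satisfies \eqref{eqn:def_reconstoperator}, integration by parts gives $\int_{\Omega}\nabla q\cdot\Pi\vecb{v}_h\dx=-\int_{\Omega}q\,\mathrm{div}(\Pi\vecb{v}_h)\dx+\int_{\partial\Omega}q\,\Pi\vecb{v}_h\cdot\vecb{n}\ds=0$, since $\mathrm{div}\Pi\vecb{v}_h=0$ and the normal trace of $\Pi\vecb{v}_h$ inherits the vanishing boundary value from $\vecb{v}_h\in V_h$. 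For $\Pi=\mathrm{id}$ without \eqref{eqn:def_reconstoperator}, an integration by parts plus insertion of an arbitrary $q_h\in Q_h$ (admissible because $\vecb{v}_h\in V_{0,h}$), followed by $\|\mathrm{div}\vecb{v}_h\|_{L^2}\le\|\nabla\vecb{v}_h\|_{L^2}$ and infimum over $q_h$, gives the best approximation bound.

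For part (d), the key observation is that $\vecb{f}\in L^2$ and $p\in H^1$ together imply $\vecb{f}-\nabla p=-\nu\Delta\vecb{u}$ in $L^2(\Omega)^2$. Given $\vecb{v}_h\in V_{0,h}$, decompose $(1-\Pi)\vecb{v}_h=\sum_{K\in\mathcal{K}}\vecb{\sigma}_K|_K$ via \eqref{eqn:reconstop_local_orthogonalities}. The orthogonality $\int_K\vecb{\sigma}_K\cdot\vecb{g}_h\dx=0$ for all $\vecb{g}_h\in P_{q-1}(K)^2$ permits the substitution $\vecb{g}_h:=-\pi_{P_{q-1}(K)}(\vecb{f}-\nabla p)$, giving
\begin{equation*}
\nu\Delta_{\mathcal{T}}\vecb{u}_h - \vecb{g}_h = \nu\Delta_{\mathcal{T}}(\vecb{u}_h-\vecb{u}) - (1-\pi_{P_{q-1}(K)})(\vecb{f}-\nabla p)
\end{equation*}
so that Cauchy–Schwarz together with $\|\vecb{\sigma}_K\|_{L^2(K)}\lesssim h_K\|\nabla\vecb{v}_h\|_{L^2(K)}$ yields a local bound by $h_K\nu\|\Delta_{\mathcal{T}}(\vecb{u}_h-\vecb{u})\|_{L^2(K)}\|\nabla\vecb{v}_h\|_{L^2(K)}$ plus an oscillation of order $q-1$. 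Summing over $K$ (invoking bounded overlap of patches in the case $\mathcal{K}=\{\omega_z\}$) and applying the volume-bubble efficiency of part (b) with $q=p$, which controls $h_T\nu\|\Delta_{\mathcal{T}}(\vecb{u}_h-\vecb{u})\|_{L^2(T)}$ by $\nu\|\nabla(\vecb{u}-\vecb{u}_h)\|_{L^2(\omega_T)}+\mathrm{osc}_k(\vecb{f}-\nabla p,T)$, finishes the bound after division by $\nu\|\nabla\vecb{v}_h\|_{L^2}$ and taking the supremum.

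The main obstacle I anticipate is (d): correct bookkeeping of the two distinct oscillation scales (degree $k$ from the bubble step, degree $q-1$ from the orthogonality step), together with the patch case $\mathcal{K}=\{\omega_z\}$ where one must pass between the patch diameter $h_K$ and the cell diameters $h_T$ by mesh regularity and exploit the finite overlap of patches. All remaining steps are essentially a translation of classical Stokes efficiency arguments adapted to the reconstruction-modified setting.
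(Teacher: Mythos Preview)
Your proposal is correct and follows essentially the same route as the paper: Verf\"urth bubble arguments for (b) and (c), the local decomposition \eqref{eqn:reconstop_local_orthogonalities} combined with the already-proven volume efficiency (with $q=p$) for (d), and straightforward integration by parts for (e). In (d) your rewriting via $\nu\Delta_{\mathcal{T}}(\vecb{u}_h-\vecb{u})$ is just an algebraic relabelling of the paper's residual $\vecb{f}-\nabla p+\nu\Delta_{\mathcal{T}}\vecb{u}_h$, so the arguments coincide.

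The one point to flag is part (a): the statement asserts the sharp inequality $\|\mathrm{div}\vecb{u}_h\|_{L^2}\le\|\nabla(\vecb{u}-\vecb{u}_h)\|_{L^2}$ with constant $1$, whereas your pointwise bound $|\mathrm{div}\vecb{v}|\le\sqrt{2}\,|\nabla\vecb{v}|$ only delivers constant $\sqrt{2}$. The paper obtains the sharp constant from the identity $\|\nabla\vecb{v}\|_{L^2}^2=\|\mathrm{curl}\vecb{v}\|_{L^2}^2+\|\mathrm{div}\vecb{v}\|_{L^2}^2$ valid for $\vecb{v}\in H^1_0(\Omega)^2$ (see e.g.\ \cite[Remark~2.6]{MR1626990}), applied to $\vecb{v}=\vecb{u}-\vecb{u}_h$.
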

\begin{proof}
The proof of (a) simply uses \(\mathrm{div} \vecb{u} = 0\) to estimate
  \begin{align*}
  \| \mathrm{div} \vecb{u}_h \|_{L^2} =  \| \mathrm{div} (\vecb{u} - \vecb{u}_h) \|_{L^2} \leq \| \nabla (\vecb{u} - \vecb{u}_h) \|_{L^2}.
\end{align*}
The last inequality follows from the identity \(\| \nabla \vecb{v} \|^2 = \| \mathrm{curl} \vecb{v} \|^2 + \| \mathrm{div} \vecb{v} \|^2\) for any \(\vecb{v} \in H^1_0(\Omega)^2\),
see e.g.\ \cite[Remark 2.6]{MR1626990}.

The proof of (b) and (c) is standard and employs the bubble-technique of Verf\"urth, see e.g. \cite{MR993474,MR1213837} or into the proof of Theorem~\ref{thm:efficiency_new_estimate} below.

  To show (d), observe that Property \eqref{eqn:reconstop_local_orthogonalities} leads to
  \begin{align*}
    &\int_\Omega \nu \Delta_\mathcal{T} \vecb{u}_h \cdot (1 - \Pi) \vecb{v}_h \dx = \sum_{K \in \mathcal{K}} \int_{K} \nu \Delta_\mathcal{T} \vecb{u}_h \cdot \sigma_K \dx\\
     & = \sum_{K \in \mathcal{K}} \int_{K}  (\vecb{f} - \nabla p + \nu \Delta_\mathcal{T} \vecb{u}_h) \cdot \sigma_K \dx - \int_{K} (1 - \pi_{P_{q-1}(K)})(\vecb{f} - \nabla p )\cdot \sigma_K \dx\\
     & \lesssim \sum_{K \in \mathcal{K}} h_K \left( \| \vecb{f} - \nabla p+ \nu \Delta_\mathcal{T} \vecb{u}_h\|_{L^2(K)} \right. \\ &\qquad \qquad \qquad \qquad\qquad \qquad \left. + \|(1 - \pi_{P_{q-1}(K)}) (\vecb{f} - \nabla p) \|_{L^2(K)} \right)\| h_K^{-1} \sigma_K \|_{L^2(K)}\\
     & \lesssim \left( \sum_{K \in \mathcal{K}}  \| h_K ( \vecb{f} - \nabla p + \nu \Delta_\mathcal{T} \vecb{u}_h) \|_{L^2(K)}^2\right)^{1/2} \| \nabla \vecb{v}_h \|_{L^2}
      + \mathrm{osc}_{q-1}(\vecb{f} - \nabla p,\mathcal{K}) \| \nabla \vecb{v}_h \|_{L^2}\\
     & = \left(\eta_\text{vol}(p,\nabla \vecb{u}) + \mathrm{osc}_{q-1}(\vecb{f} - \nabla p,\mathcal{K})\right) \| \nabla \vecb{v}_h \|_{L^2}.
  \end{align*}
  A division by \(\| \nabla \vecb{v}_h \|_{L^2}\) and the result from (b) conclude the proof of (d).

The proof of (e) is straight forward and employs integration by parts and the
orthogonality of \(\mathrm{div}(\vecb{v}_h)\) onto all \(q_h \in Q_h\) if \(\Pi = 1\) does not satisfy \eqref{eqn:def_reconstoperator}.
Otherwise, if \(\Pi\) satisfies \eqref{eqn:def_reconstoperator}, the assertion follows from \(\mathrm{div}(\Pi \vecb{v}_h) = 0\). 
\end{proof}

\begin{remark}
Theorem~\ref{thm:efficiency}.(b) shows the pressure-dependence also in the efficiency estimate. The volume term \(\eta_\text{vol}(q,\nabla \vecb{u}_h)\) scales with
the term \(\nu^{-1} \| p - q\|_{L^2}\). Hence, a pressure-robust method is only efficient with a good approximation \(q \approx p\).
In the hydrostatic (worst) case with \(\vecb{u}_h = 0\) and \(\vecb{f} = \nabla p\), \(\eta_\text{vol}(q,\nabla \vecb{u})\) is not zero
(hence inefficient with efficiency index infinity) as long as \(q \neq p\) is inserted.
To compute the correct pressure is in general impossible or expensive.
Some strategy to find an approximation that at least yields a higher-order term is discussed
in \cite{MR3366087}.

Note however, that \(\eta_\text{vol}(q,\nabla \vecb{u}_h)\) is efficient for a classical pressure-inrobust method with \(q_h = p_h\) (or some suitable \(H^1\)-approximation),
since then the discrete velocity error and its velocity error
also depends on \(\nu^{-1} \|p - p_h\|_{L^2}\), see e.g.\ our numerical examples in Section~\ref{sec:numerics}.
\end{remark}

\section{Refined residual-based error bounds}\label{novelestimates}
This section offers an alternative a posteriori error estimator and is related to the stream function and vorticity formulation of the Navier--Stokes
equations. The analysis employs the two-dimensional curl operators for vector and scalar fields
\begin{align*}
   \mathrm{curl} \vecb{\phi} := (   \partial \phi_2 / \partial x  -\partial \phi_1 / \partial y) 
   %& := \begin{pmatrix}\partial \phi_3 / \partial y  -\partial \phi_2 / \partial z,\\
   %      \partial \phi_1 / \partial z  -\partial \phi_3 / \partial x,\\
   %      \partial \phi_2 / \partial x  -\partial \phi_1 / \partial y\end{pmatrix}
  \quad &\text{for } \vecb{\phi}=(\phi_1,\phi_2) \in H^1(\Omega)^2,\\
     \mathrm{curl} \phi := \begin{pmatrix}    -\partial \phi / \partial y \\ \partial \phi / \partial x  \end{pmatrix}   \quad &\text{for } \phi \in H^1(\Omega).   
\end{align*}
%In two dimensions \(d=2\), we silently assume that every vector is embedded in a three-dimensional vector with third component zero.
%The vorticity \(\boldsymbol{\omega} := \mathrm{curl} \vecb{u}\) satisfies in particular \(\mathrm{curl}(\vecb{f}) = - \nu \Delta \boldsymbol{\omega} = -\mathrm{curl}(\nu \Delta \vecb{u})\).
%Similarly, any divergence-free testfunction \(\vecb{v} \in \vecb{V}_0\) can be expressed as \(\vecb{v} = \mathrm{curl} \vecb{\phi}\) for some function 
%\(\vecb{\phi} \in H^2(\Omega)^3\) \cite{GR86}.
The outcome of this alternative approach is a different volume term that only takes
\(\mathrm{curl}(\vecb{f})\) into account and so automatically cancels the gradient part of
the Helmholtz decomposition. Hence, no knowledge or good approximation of \(\mathbb{P} \vecb{f}\) is needed.
The resulting terms are related to the terms in \cite{MR1645033} where error indicators for
discretisations of the streamline and vorticity formulation were derived. However, our error estimator holds for pressure-robust finite element
methods for the velocity and pressure formulation of the Navier--Stokes equations.

Given a Fortin interpolator \(I\) and a reconstruction operator \(\Pi\) with
\eqref{eqn:def_reconstoperator} (possibly \(\Pi = 1\) for divergence-free finite element methods like
the Scott-Vogelius finite element method),
the novel approach exploits that \(\Pi I \vecb{v}\) for some divergence-free function \(\vecb{v} \in \vecb{V}_0\) is again a divergence-free function in \(L^2_\sigma(\Omega)\). Our analysis needs the following assumption on the two operators
additional to \eqref{eqn:def_reconstoperator} and \eqref{eqn:Fortinprops1}-\eqref{eqn:Fortinprops2}.
\begin{assumption} \label{assumption1}
  For every \(\vecb{v} \in \vecb{V}_0\), the Fortin interpolator \(I\) and the reconstruction operator \(\Pi\) satisfy
\begin{align*}
    \Pi I \vecb{v} \in L^2_\sigma(\Omega) \quad \text{and hence} \quad \int_\Omega (1 - \Pi I) \vecb{v} \cdot \nabla q \dx = 0 \quad \text{for all } q \in H^1(\Omega),
\end{align*}
and the estimate
\begin{align*}
   \int_\Omega (1 - \Pi I) \vecb{v} \cdot \vecb{\theta} \dx
   \lesssim \| \nabla \vecb{v} \|_{L^2} \| h_\mathcal{T}^{2} \mathrm{curl} \vecb{\theta} \|_{L^2} \quad \text{for all } \vecb{\theta} \in H(\mathrm{curl},\Omega).
\end{align*}
%   \begin{align*}
%   \| (1 - \Pi I) \vecb{v} \|_{H(\mathrm{curl},\Omega)^{-1}} :=
%   \sup_{\theta \in H^1_0(\Omega)^3 / \nabla H^1(\Omega)} \frac{\int_\Omega (1 - \Pi I) \vecb{v} \cdot \vecb{\theta} \dx}{\| h_\mathcal{T}^{2} \mathrm{curl} \vecb{\theta} \|_{L^2}}
%   \lesssim \| \nabla \vecb{v} \|_{L^2} \quad \text{for all } \vecb{v} \in \vecb{V}_0.
% \end{align*}
% {\color{red}
% The set under the supremum can be replaced by \(\vecb{V}_0\), I think.
% }
\end{assumption}
\begin{theorem}[Novel error estimator for pressure-robust methods]\label{thm:eta_averaging_pr}
For \(\vecb{u}_h\) of \eqref{eqn:modified_solution}
and any \(\sigma \in H^1(\mathcal{T})^{2 \times 2}\) (that approximates or equals \(\nabla \vecb{u}_h\)), the error estimator
\begin{align*}
  \eta_\text{new}(\sigma) := 
 \eta_\text{curl}(\sigma) 
 + \eta_\text{jump}(\sigma)
 + \eta_\text{jump,2}(\sigma)
 + \eta_\text{avg}(\sigma)
 + \eta_\text{cons,1}(\sigma)
\end{align*}
with the subterms
  \begin{align*}
     \eta_\text{curl}(\sigma) & := \| h_\mathcal{T}^2 \mathrm{curl}_\mathcal{T} (\vecb{f} + \nu \mathrm{div}_h \sigma) \|_{L^2} \\
     \eta_\text{jump}(\sigma) & := \| h_\mathcal{E}^{1/2} [\nu \sigma \vecb{n}_E] \|_{L^2(\mathcal{E}^\circ)}\\
     \eta_\text{jump,2}(\sigma) & := \| h_\mathcal{E}^{3/2} [(\vecb{f} + \nu \mathrm{div}_h \sigma) \cdot \vecb{\tau_E}] \|_{L^2(\mathcal{E}^\circ)}\\
     \eta_\text{avg}(\sigma) & := \nu \| \nabla \vecb{u}_h - \sigma \|_{L^2}\\
     \eta_\text{cons,1}(\sigma) & := \| \nu \mathrm{div}_h (\sigma) \circ (1 - \Pi) \|_{V_{0,h}^\star}
  \end{align*}
satisfies
\begin{align*}
   \| r \|_{\vecb{V}_0^\star} \lesssim \eta(q)
   \quad \text{and hence} \quad \| \nabla(\vecb{u} - \vecb{u}_h) \|_{L^2}^2 \lesssim \nu^{-2} \eta(q)^2 + \frac{1}{c_0^2} \| \mathrm{div} \vecb{u}_h \|^2_{L^2} .
\end{align*}
Note in particular, that the volume contribution \(\eta_\text{vol}(q,\sigma)\) from Theorem~\ref{thm:errorbounds} has been replaced by the quantity \(\eta_\text{curl}(\sigma)\)
that is pressure-independent (or \(q\)-independent).
\end{theorem}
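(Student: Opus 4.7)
The plan is to mimic the proof of Theorem~\ref{thm:errorbounds}(b), with one crucial twist: instead of pairing the volume residual $\vecb{g}:=\vecb{f}+\nu\mathrm{div}_h\sigma$ with $(1-\Pi I)\vecb{v}$ in $L^2$ (which would force the appearance of a pressure approximation $\nabla q$), I would exploit the curl estimate from Assumption~\ref{assumption1} after smoothing $\vecb{g}$ into an $H(\mathrm{curl},\Omega)$-conforming object. Combined with Theorem~\ref{thm:errorbounds}(a), it suffices to prove $\|r\|_{\vecb{V}_0^\star}\lesssim\eta_\text{new}(\sigma)$.

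\textbf{Step 1 (residual rearrangement).} Fix $\vecb{v}\in\vecb{V}_0$. Since $I\vecb{v}\in V_{0,h}$ and $\mathrm{div}_h I\vecb{v}=0$, testing \eqref{eqn:modified_solution} with $I\vecb{v}$ gives $\nu(\nabla\vecb{u}_h,\nabla I\vecb{v})=(\vecb{f},\Pi I\vecb{v})$. Subtracting this from $r(\vecb{v})$, inserting $\pm\sigma$ into the gradient term, and performing the same piecewise integration by parts as in the proof of Theorem~\ref{thm:errorbounds}(b) (using that jumps of $I\vecb{v}$ vanish because $I\vecb{v}\in H^1_0$) yields
\begin{align*}
r(\vecb{v})&=\int_\Omega \vecb{g}\cdot(1-\Pi I)\vecb{v}\dx-\nu\int_\Omega\mathrm{div}_h\sigma\cdot(1-\Pi)I\vecb{v}\dx\\
&\quad-\sum_{E\in\mathcal{E}^\circ}\int_E[\nu\sigma\vecb{n}_E]\cdot(1-I)\vecb{v}\ds-\nu\int_\Omega(\nabla\vecb{u}_h-\sigma):\nabla_h(1-I)\vecb{v}\dx.
\end{align*}
The three trailing terms each produce, by Cauchy--Schwarz together with a trace inequality and the Fortin properties \eqref{eqn:Fortinprops1}--\eqref{eqn:Fortinprops2}, the contributions $\eta_\text{cons,1}(\sigma)$, $\eta_\text{jump}(\sigma)$ and $\eta_\text{avg}(\sigma)$ multiplied by $\|\nabla\vecb{v}\|_{L^2}$.

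\textbf{Step 2 (curl-based volume bound).} The novel part concerns $\int_\Omega\vecb{g}\cdot(1-\Pi I)\vecb{v}\dx$. Since $\vecb{g}$ is only piecewise smooth and generally has nonzero tangential jumps $[\vecb{g}\cdot\vecb{\tau}_E]$ on interior edges, $\vecb{g}\notin H(\mathrm{curl},\Omega)$ and Assumption~\ref{assumption1} is not directly applicable. I would construct an $H(\mathrm{curl},\Omega)$-conforming averaging $\vecb{\theta}$ of $\vecb{g}$ (of N\'ed\'elec-type, whose tangential dof on each interior edge is the mean of the two one-sided tangential traces of $\vecb{g}$) and split
\[
\int_\Omega\vecb{g}\cdot(1-\Pi I)\vecb{v}\dx=\int_\Omega\vecb{\theta}\cdot(1-\Pi I)\vecb{v}\dx+\int_\Omega(\vecb{g}-\vecb{\theta})\cdot(1-\Pi I)\vecb{v}\dx.
\]
Assumption~\ref{assumption1} applied to $\vecb{\theta}\in H(\mathrm{curl},\Omega)$ bounds the first integral by $\|\nabla\vecb{v}\|_{L^2}\|h_\mathcal{T}^2\mathrm{curl}\vecb{\theta}\|_{L^2}$; by elementwise stability of the averaging operator and a Stokes-theorem computation on each triangle, $\|h_\mathcal{T}^2\mathrm{curl}\vecb{\theta}\|_{L^2}\lesssim \|h_\mathcal{T}^2\mathrm{curl}_\mathcal{T}\vecb{g}\|_{L^2}=\eta_\text{curl}(\sigma)$. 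For the second integral, $\vecb{g}-\vecb{\theta}$ is supported on edge patches with $\|\vecb{g}-\vecb{\theta}\|_{L^2(T)}\lesssim h_T^{1/2}\sum_{E\subset\partial T}\|[\vecb{g}\cdot\vecb{\tau}_E]\|_{L^2(E)}$, which paired via Cauchy--Schwarz with $\|(1-\Pi I)\vecb{v}\|_{L^2(T)}\lesssim h_T\|\nabla\vecb{v}\|_{L^2(\omega_T)}$ (from \eqref{eqn:Fortinprops1}--\eqref{eqn:Fortinprops2} and \eqref{eqn:reconstop_local_orthogonalities}) and summation over $T\in\mathcal{T}$ yields exactly the contribution $\eta_\text{jump,2}(\sigma)\|\nabla\vecb{v}\|_{L^2}$.

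\textbf{Step 3 (conclusion).} Adding the five bounds obtained in Steps 1--2 and dividing by $\|\nabla\vecb{v}\|_{L^2}$ gives $\|r\|_{\vecb{V}_0^\star}\lesssim\eta_\text{new}(\sigma)$, and the asserted velocity error estimate follows from Theorem~\ref{thm:errorbounds}(a).

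\textbf{Main obstacle.} The crux is the construction of the $H(\mathrm{curl})$-conforming averaging $\vecb{\theta}$ and the two estimates it must satisfy simultaneously: a curl-stability bound $\|h_\mathcal{T}^2\mathrm{curl}\vecb{\theta}\|_{L^2}\lesssim\|h_\mathcal{T}^2\mathrm{curl}_\mathcal{T}\vecb{g}\|_{L^2}$ and a jump-controlled approximation bound $\|\vecb{g}-\vecb{\theta}\|_{L^2(T)}\lesssim h_T^{1/2}\sum_{E\subset\partial T}\|[\vecb{g}\cdot\vecb{\tau}_E]\|_{L^2(E)}$. Such operators are familiar from a posteriori theory for Maxwell problems, but the weighting $h_\mathcal{T}^2$ dictated by Assumption~\ref{assumption1} together with the desired $h_\mathcal{E}^{3/2}$ scaling in $\eta_\text{jump,2}$ must be matched carefully so that both estimates produce the right powers of $h$ without a loss of order.
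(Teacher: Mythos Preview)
Your Step~1 is correct and coincides with the paper's decomposition $r(\vecb{v})=A+B+C+D$; the handling of the jump, averaging, and consistency terms is the same.

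Step~2 is where you and the paper part ways. The paper does \emph{not} smooth $\vecb{g}=\vecb{f}+\nu\,\mathrm{div}_h\sigma$ into $H(\mathrm{curl},\Omega)$. Instead it exploits that $(1-\Pi I)\vecb{v}$ is divergence-free with vanishing normal trace, hence equals $\mathrm{curl}\,\psi$ for some $\psi\in H^1_0(\Omega)$. A piecewise integration by parts then gives
\[
\int_\Omega \vecb{g}\cdot\mathrm{curl}\,\psi\dx
=\sum_{T}\int_T(\mathrm{curl}\,\vecb{g})\,\psi\dx
+\sum_{E\in\mathcal{E}^\circ}\int_E[\vecb{g}\cdot\vecb{\tau}_E]\,\psi\ds,
\]
and Assumption~\ref{assumption1} is invoked only to bound $\|h_\mathcal{T}^{-2}\psi\|_{L^2}$ (and, via a trace inequality, $\|h_\mathcal{E}^{-3/2}\psi\|_{L^2(\mathcal{E}^\circ)}$) by $\|\nabla\vecb{v}\|_{L^2}$. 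No $H(\mathrm{curl})$-conforming modification of $\vecb{g}$ is needed; the tangential jump term $\eta_\text{jump,2}$ arises naturally from the boundary terms of the piecewise integration by parts.

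Your smoothing route has a genuine gap. A N\'ed\'elec-type averaging $\vecb{\theta}$ built from averaged tangential edge moments cannot satisfy
\[
\|\vecb{g}-\vecb{\theta}\|_{L^2(T)}\lesssim h_T^{1/2}\sum_{E\subset\partial T}\|[\vecb{g}\cdot\vecb{\tau}_E]\|_{L^2(E)}
\]
for general $\vecb{g}$: take $\vecb{g}\in H(\mathrm{curl},\Omega)$ smooth but not piecewise polynomial (this is generic since $\vecb{f}$ is arbitrary); then all tangential jumps vanish, the right-hand side is zero, yet $\vecb{g}\ne\vecb{\theta}$ because $\vecb{\theta}$ lives in a finite-dimensional N\'ed\'elec space. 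Oswald-type bounds of the form you want are only available for piecewise polynomial arguments; applying them here would force an additional data-oscillation term that the theorem does not carry. A jump-lifting construction (adding to $\vecb{g}$ a local correction supported on edge patches that annihilates the tangential jumps) could in principle repair this, but the interaction between neighbouring edges and the required simultaneous curl control make it noticeably more delicate than the paper's stream-function argument, which sidesteps the whole issue.
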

\begin{proof}
As in the estimation of \(\| r \|_{\vecb{V}_0^\star}\) in the proof of Theorem~\ref{thm:errorbounds}.(b), we subtract the Fortin interpolation \(I\vecb{v}\)
of any testfunction \(\vecb{v}\) by employing \eqref{eqn:modified_solution}, i.e.
\begin{align*}
  r(v) & = \int_\Omega \vecb{f} \cdot (\vecb{v} - \Pi I \vecb{v}) \dx
  - \nu \int_\Omega \nabla \vecb{u}_h : \nabla(\vecb{v} - I \vecb{v}) \dx.
\end{align*}
Given any \(\sigma \in H^1(\mathcal{T})^{2 \times 2}\), an (element-wise) integration by parts shows
\begin{multline*}
  r(v) = \int_\Omega (\vecb{f} + \nu \mathrm{div}_h \sigma) \cdot (\vecb{v} - \Pi I \vecb{v}) \dx
  + \nu \int_\Omega (\sigma - \nabla \vecb{u}_h) : \nabla (\vecb{v} - I \vecb{v}) \dx\\
   + \nu \sum_{E \in \mathcal{E}^\circ} \int_E [\sigma\vecb{n}] \cdot (\vecb{v} - I \vecb{v}) \ds 
   + \nu \int_\Omega (\mathrm{div}_h \sigma) \cdot (\Pi I \vecb{v} - I\vecb{v}) \dx
   =: A + B + C + D.
\end{multline*}
The terms \(B,C\) and \(D\) are estimated as in Theorem~\ref{thm:errorbounds}.(b) by
\begin{align*}
  B & := \nu \sum_{T \in \mathcal{T}}
  \int_T (\sigma - \nabla \vecb{u}_h): \nabla (\vecb{v} - I \vecb{v}) \dx
  \leq \nu \| \sigma - \nabla \vecb{u}_h \|_{L^2} \| \nabla \vecb{v} \|_{L^2}\\
  C & := \nu \sum_{E \in \mathcal{E}^\circ}
  \int_E [\sigma\vecb{n}] \cdot (\vecb{v} - I \vecb{v}) \ds
  \leq \nu \|h_\mathcal{E}^{1/2}  [\sigma\vecb{n}] \|_{L^2( \mathcal{E}^\circ)} \| \nabla \vecb{v} \|_{L^2}\\
  D &:= \nu \int_\Omega (\mathrm{div}_h \sigma) \cdot (\Pi I \vecb{v} - I\vecb{v}) \dx
  \leq \nu \| (\mathrm{div}_h \sigma) \circ (1 - \Pi) \|_{V_{0,h}^\star} \| \nabla \vecb{v} \|_{L^2}.
\end{align*}
It remains to estimate term \(A\). As $\vecb{v} - \Pi I \vecb{v}$ is exactly divergence free and has a zero normal trace we can apply Theorem 3.1, chapter 1 in \cite{GR86} to find a scalar potential $\psi \in H^1_0(\Omega)$ with $\mathrm{curl} \psi = \vecb{v} - \Pi I \vecb{v}$.
%Next let $\lambda \in H^1(\widetilde{\Omega}) / \mathbb{R}$ be the solution of the Poisson problem with Neumann boundary conditions
%\begin{align*}
%  \int_{\widetilde{\Omega}} \avmsize_{\widetilde{\mathcal{T}}}^{-4} \nabla \lambda \cdot \nabla \mu \dx - \int_{\partial \widetilde{\Omega}} \avmsize_{\widetilde{\mathcal{T}}}^{-4} \nabla \lambda \cdot \vecb{n} \mu \ds   = \int_{\widetilde{\Omega}} \avmsize_{\widetilde{\mathcal{T}}}^{-4}\vecb{\Psi}_1 \cdot \nabla \mu \dx - \int_{\partial \widetilde{\Omega}} \avmsize_{\widetilde{\mathcal{T}}}^{-4} \vecb{\Psi}_1 \cdot \vecb{n} \mu \ds \quad \forall \mu \in  H^1(\widetilde{\Omega})/ \mathbb{R},
%\end{align*}
%and define $\vecb{\Psi} :=\vecb{\Psi}_1 - \nabla \lambda$. We obviously have $\mathrm{curl}( \vecb{\Psi}_1) = \mathrm{curl}( \vecb{\Psi})$ and  $\mathrm{div}(\avmsize_{\widetilde{\mathcal{T}}}^{-4} \vecb{\Psi}) = 0$ and $\avmsize_{\widetilde{\mathcal{T}}}^{-4} \vecb{\Psi} \cdot \vecb{n} = 0$ on $\partial \widetilde{\Omega}$.
In the following we bound the weighted $L^2$ norm of $\psi$.  Note that from $ h_{\mathcal{T}}^{-4} \psi \in L^2(\Omega)$ follows $h_{\mathcal{T}}^{-2} \psi \in h_{\mathcal{T}}^{2} \mathrm{curl}( H(\mathrm{curl}, \Omega))$, due to the surjectivity of the $\mathrm{curl}$ operator (de Rham complex) and so
\begin{align*}
  \| h_{\mathcal{T}}^{-2} \psi \|_{L^2(\Omega)} &= \frac{\int_{\Omega} h_{\mathcal{T}}^{-2}  \psi  h_{\mathcal{T}}^{-2} \psi  \dx}{\|  h_{\mathcal{T}}^{-2} \psi  \|_{L^2(\Omega)}} \\
  &\le \sup\limits_{\vecb{\theta} \in H(\mathrm{curl}, \Omega)} \frac{\int_{\Omega}  h_{\mathcal{T}}^{-2} \psi  h_{\mathcal{T}}^{2} \mathrm{curl} \vecb{\theta}\dx}{\| h_{\mathcal{T}}^{2} \mathrm{curl}\vecb{\theta} \|_{L^2(\Omega)} } = \sup\limits_{\vecb{\theta} \in H(\mathrm{curl}, \Omega)} \frac{\int_{\Omega}   \psi \mathrm{curl} \vecb{\theta}\dx}{\| h_{\mathcal{T}}^{2} \mathrm{curl}\vecb{\theta} \|_{L^2(\Omega)} }.
\end{align*}
%\begin{align*}
%\sup\limits_{\vecb{\theta} \in H_0(\mathrm{curl}, \widetilde{\Omega})} \frac{\int_{\widetilde{\Omega}}  \vecb{\Psi} \cdot \mathrm{curl} \vecb{\theta} \dx}{\| \avmsize_{\widetilde{\mathcal{T}}}^{2} \mathrm{curl} \vecb{\theta} \|_{L^2(\widetilde{\Omega})} } = \sup\limits_{\vecb{\theta} \in H_0(\mathrm{curl}, \widetilde{\Omega})} \frac{\int_{\widetilde{\Omega}} \avmsize_{\widetilde{\mathcal{T}}}^{-2} \vecb{\Psi} \cdot \avmsize_{\widetilde{\mathcal{T}}}^{2} \mathrm{curl} \vecb{\theta}\dx}{\| \avmsize_{\widetilde{\mathcal{T}}}^{2} \mathrm{curl} \vecb{\theta} \|_{L^2(\widetilde{\Omega})} }  \ge \frac{\int_{\widetilde{\Omega}} \avmsize_{\widetilde{\mathcal{T}}}^{-2} \vecb{\Psi} \cdot  \avmsize_{\widetilde{\mathcal{T}}}^{-2} \vecb{\Psi} \dx}{\|  \avmsize_{\widetilde{\mathcal{T}}}^{-2} \vecb{\Psi} \|_{L^2(\widetilde{\Omega})}} =  \| \avmsize_{\widetilde{\mathcal{T}}}^{-2} \vecb{\Psi} \|_{L^2(\widetilde{\Omega})}.
%\end{align*}
On the other hand one can bound the supremum by $\| h_{\mathcal{T}}^{-2}  \psi \|_{L^2(\Omega)}$ with a simple Cauchy Schwarz estimate.  Using Assumption~\ref{assumption1} it follows by an integration by parts and $\psi \in H^1_0(\Omega)$ that 
\begin{align} \label{eqn:psiltwonorm}
  \| h_{\mathcal{T}}^{-2}  \psi \|_{L^2(\Omega)} & = \sup\limits_{\vecb{\theta} \in H(\mathrm{curl}, \Omega)} \frac{\int_{\Omega}  \psi \mathrm{curl} \vecb{\theta} \dx}{\| h_{\mathcal{T}}^{2}  \mathrm{curl} \vecb{\theta} \|_{L^2(\Omega)} } \\
  &= \sup\limits_{\vecb{\theta} \in H(\mathrm{curl}, \Omega)} \frac{\int_{\Omega}  \mathrm{curl} \psi \cdot  \vecb{\theta} \dx}{\|h_{\mathcal{T}}^{2}  \mathrm{curl} \vecb{\theta} \|_{L^2(\Omega)} } \lesssim  \| \nabla \vecb{v} \|_{L^2(\Omega)}. \nonumber 
  %&= \sup\limits_{\vecb{\theta} \in H_0(\mathrm{curl}, \widetilde{\Omega})} \frac{\int_{\Omega}  \mathrm{curl} \vecb{\Psi} \cdot  \vecb{\theta} \dx}{\| \avmsize_{\widetilde{\mathcal{T}}}^{2} \mathrm{curl} \vecb{\theta} \|_{L^2(\widetilde{\Omega})} } \le \sup\limits_{\vecb{\theta} \in H_0(\mathrm{curl}, \widetilde{\Omega})} \frac{\int_{\Omega}  \mathrm{curl} \vecb{\Psi} \cdot  \vecb{\theta} \dx}{\| \avmsize_{\widetilde{\mathcal{T}}}^{2} \mathrm{curl} \vecb{\theta} \|_{L^2(\Omega)} }  \lesssim  \| \nabla \vecb{v} \|_{L^2(\Omega)}. 
\end{align}
  With $\vecb{\theta}_h := \vecb{f} + \nu \mathrm{div}_h \sigma$ and $\psi = 0 $ on $\partial \Omega$ a piecewise integration by parts yields
\begin{align*}
  A &:= \int_\Omega \vecb{\theta}_h \cdot (\vecb{v} - \Pi I \vecb{v}) \dx = \int_\Omega \vecb{\theta}_h \cdot \mathrm{curl} \psi \dx\\
                                                                    &= \sum_{T \in \mathcal{T}} \int_T \mathrm{curl} \vecb{\theta}_h  \psi \dx + \sum_{E \in \mathcal{E}^\circ} \int_E [\vecb{\theta}_h \cdot \vecb{\tau_E}] \psi  \ds\\
                                                                    & \lesssim \sum_{T \in \mathcal{T}} \| h_T^2 \mathrm{curl} \vecb{\theta}_h \|_{L^2(T)} \| h_T^{-2} \psi \|_{L^2(T)} + \sum_{E \in \mathcal{E}^\circ} \| h_E^{3/2} [\vecb{\theta}_h \cdot \vecb{\tau_E}]\|_{L^2(E)} \| h_E^{- 3/2} \psi  \|_{L^2(E)}\\
  &\lesssim \left( \| h_\mathcal{T}^2 \mathrm{curl}_\mathcal{T} \vecb{\theta}_h \|_{L^2(\Omega)} + \| h^{3/2} [\vecb{\theta}_h \cdot \vecb{\tau_E}]\|_{L^2(\mathcal{E}^\circ)} \right) \left( \| h_\mathcal{T}^{-2} \psi \|_{L^2(\Omega)} + \|  h_\mathcal{E}^{- 3/2} \psi  \|_{L^2(\mathcal{E}^\circ)} \right).
\end{align*}
%Using estimate \eqref{eqn:psiltwonorm} we have
%\begin{align*}
% \| h_\mathcal{T}^{-2} \vecb{\Psi} \|_{L^2(\Omega)} \lesssim  \| \avmsize_{\widetilde{\mathcal{T}}}^{-2} \vecb{\Psi} \|_{L^2(\widetilde{\Omega}_T)} \lesssim  \| \nabla \vecb{v} \|_{L^2(\Omega)}.
%\end{align*}
Using a standard scaling argument we get, for each edge $E \in \mathcal{E}^\circ$,
\begin{align*}
\|  h_E^{- 3/2} \psi  \|_{L^2(E)} \lesssim h_T^{- 2}  \|\psi\|_{L^2(T)} +  h_T^{-1}  \| \nabla \psi  \|_{L^2(T)}.
\end{align*}
For the second term in the previous estimate we have
\begin{align*}
h_T^{-1}  \| \nabla \psi  \|_{L^2(T)} = h_T^{-1}  \| \mathrm{curl} \psi  \|_{L^2(T)} = h_T^{-1}  \|  \vecb{v} - \Pi I \vecb{v} \|_{L^2(T)} \lesssim  \| \nabla \vecb{v} \|_{L^2(\omega_T)}.
\end{align*}
%One ach element and the corresponding element patch we now use the regular decomposition from before, thus $ \nabla \vecb{\Psi} =  \nabla^2 \phi + \nabla \vecb{z}$, and get with the local regularity \eqref{eqn::localreg}
%\begin{align*}
%h_\mathcal{T}^{-1}  \| \nabla \vecb{\Psi}  \|_{L^2(T)} \lesssim \avmsize_{\widetilde{\mathcal{T}}}^{-1}  | \phi |_{H^2(T)} + \avmsize_{\widetilde{\mathcal{T}}}^{-1}  \| \nabla \vecb{z} \|_{L^2(T)} \lesssim \avmsize_{\widetilde{\mathcal{T}}}^{-2} \| \vecb{\Psi} \|_{L^2(\widetilde{\omega}_T)} + \avmsize_{\widetilde{\mathcal{T}}}^{-1} \| \mathrm{curl} \vecb{\Psi} \|_{L^2(\widetilde{\omega}_T)}.
%\end{align*}
%With 
%\begin{align*}
% \| \mathrm{curl} \vecb{\Psi} \|_{L^2(\widetilde{\omega}_T)} =  \| \Ext(\vecb{v} - \Pi I \vecb{v}) \|_{L^2(\widetilde{\omega}_T)} = \| \vecb{v} - \Pi I \vecb{v} \|_{L^2(\widetilde{\omega}_T \cap \Omega) },
%\end{align*}
Together with \eqref{eqn:psiltwonorm} and an overlap argument this leads to 
\begin{align*}
  \|  h_\mathcal{T}^{- 3/2} \psi  \|_{L^2(\mathcal{E}^\circ)} &\lesssim  \|h_\mathcal{T}^{- 2}  \psi  \|_{L^2(\Omega)} + \|h_\mathcal{T}^{-1}  \nabla \psi  \|_{L^2(\Omega)} \lesssim \| \nabla \vecb{v} \|_{L^2(\Omega)}.
\end{align*}
This concludes the estimate for $A$, i.e.
\begin{align*}
  A \lesssim \left( \eta_\text{curl}(\sigma) + \eta_\text{jump,2}(\sigma) \right) \| \nabla \vecb{v} \|_{L^2(\Omega)}.
\end{align*}
The collection of all separate estimates for \(A\) to \(D\) shows
\begin{align*} 
 r(v) \lesssim \eta(\sigma) \| \nabla \vecb{v} \|_{L^2}
\end{align*}
and a division by \(\| \nabla \vecb{v} \|_{L^2}\) concludes the proof. 
\end{proof}

The same techniques also a yield a novel error estimate for classical methods.
\begin{proposition}[Novel error estimator for classical methods]\label{thm:eta_averaging_cl}
For \(\vecb{u}_h\) of \eqref{eqn:classical_solution}
and any \(\sigma \in H^1(\mathcal{T})^{2 \times 2}\) (that approximates or equals \(\nabla \vecb{u}_h\)),
the error estimator
\begin{multline*}
  \eta_\text{new}(\sigma) := 
  \eta_\text{curl}(\sigma) 
 + \eta_\text{jump}(\sigma)
 + \eta_\text{avg}(\sigma)
 + \| (\vecb{f} + \nu \mathrm{div}_h \sigma) \circ (1 - \Pi) \|_{V_{0,h}^\star}
\end{multline*}
satisfies
\begin{align*}
   \| r \|_{\vecb{V}_0^\star} \lesssim \eta(q)
   \quad \text{and hence} \quad \| \nabla(\vecb{u} - \vecb{u}_h) \|_{L^2}^2 \lesssim \nu^{-2} \eta(q)^2 + \frac{1}{c_0^2} \| \mathrm{div} \vecb{u}_h \|^2_{L^2}.
\end{align*}
Note, that \(\Pi\) is used only in the error estimator here, but not in the calculation of \(\vecb{u}_h\). It is not allowed to set \(\Pi = 1\)
if the classical method is not divergence-free, i.e. \(\Pi\) has to satisfy \eqref{eqn:def_reconstoperator}. The difference to the previous theorem lies in the
appearence of \(\vecb{f}\) in the consistency error \(\| (\vecb{f} + \nu \mathrm{div}_h \sigma) \circ (1 - \Pi) \|_{V_{0,h}^\star}\).
\end{proposition}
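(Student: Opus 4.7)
The plan is to mirror the proof of Theorem~\ref{thm:eta_averaging_pr} essentially verbatim; the only novelty comes from the fact that the discrete equation \eqref{eqn:classical_solution} tests with $\vecb{v}_h$ instead of $\Pi\vecb{v}_h$, so the $\vecb{f}$-term in the residual no longer cancels against $\Pi I\vecb{v}$ but only against $I\vecb{v}$. First, fix $\vecb{v}\in\vecb{V}_0$ and let $I\vecb{v}\in V_{0,h}$ be its Fortin interpolation. Since $I\vecb{v}$ is discretely divergence free, \eqref{eqn:classical_solution} yields the Galerkin identity $(\vecb{f},I\vecb{v})=\nu(\nabla\vecb{u}_h,\nabla I\vecb{v})$, hence
\begin{align*}
r(\vecb{v}) \;=\; \int_\Omega \vecb{f}\cdot(\vecb{v}-I\vecb{v})\dx \;-\; \nu\int_\Omega \nabla\vecb{u}_h:\nabla(\vecb{v}-I\vecb{v})\dx.
\end{align*}

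Second, I would split $\vecb{v}-I\vecb{v}=(\vecb{v}-\Pi I\vecb{v})-(1-\Pi)I\vecb{v}$ in the first integral, insert $\pm\sigma$ in the second integral, and perform a piecewise integration by parts there. Regrouping then produces the clean identity
\begin{align*}
r(\vecb{v}) \;=\;& \int_\Omega (\vecb{f}+\nu\mathrm{div}_h\sigma)\cdot(\vecb{v}-\Pi I\vecb{v})\dx \;-\; \int_\Omega (\vecb{f}+\nu\mathrm{div}_h\sigma)\cdot(1-\Pi)I\vecb{v}\dx \\
&- \nu\int_\Omega (\nabla\vecb{u}_h-\sigma):\nabla(\vecb{v}-I\vecb{v})\dx \;-\; \sum_{E\in\mathcal{E}^\circ}\int_E [\nu\sigma\vecb{n}]\cdot(\vecb{v}-I\vecb{v})\ds.
\end{align*}
The last two pieces are the averaging and normal-jump contributions that already appear in Theorem~\ref{thm:eta_averaging_pr}, and they are estimated identically by Cauchy--Schwarz together with \eqref{eqn:Fortinprops1}--\eqref{eqn:Fortinprops2} and a trace inequality, giving $\eta_\text{avg}(\sigma)\|\nabla\vecb{v}\|_{L^2}$ and $\eta_\text{jump}(\sigma)\|\nabla\vecb{v}\|_{L^2}$. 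The new consistency integral against $(1-\Pi)I\vecb{v}$ is controlled by the very definition of its discrete dual norm combined with the $H^1$-stability of $I$, producing precisely $\|(\vecb{f}+\nu\mathrm{div}_h\sigma)\circ(1-\Pi)\|_{V_{0,h}^\star}\|\nabla\vecb{v}\|_{L^2}$. This is the one and only place where $\vecb{f}$ itself enters the consistency term, which reflects the absence of pressure-robustness in the underlying method.

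Third, the remaining volume integral $\int_\Omega(\vecb{f}+\nu\mathrm{div}_h\sigma)\cdot(\vecb{v}-\Pi I\vecb{v})\dx$ is handled exactly like term $A$ in the proof of Theorem~\ref{thm:eta_averaging_pr}: Assumption~\ref{assumption1} guarantees $\Pi I\vecb{v}\in L^2_\sigma(\Omega)$ with vanishing normal trace, so the stream function result of \cite{GR86} produces $\psi\in H^1_0(\Omega)$ with $\mathrm{curl}\,\psi=\vecb{v}-\Pi I\vecb{v}$; the weighted bound $\|h_\mathcal{T}^{-2}\psi\|_{L^2}\lesssim\|\nabla\vecb{v}\|_{L^2}$ follows by duality from the second part of Assumption~\ref{assumption1}, and a piecewise integration by parts against $\vecb{f}+\nu\mathrm{div}_h\sigma$ yields $\eta_\text{curl}(\sigma)$ plus the tangential-jump companion $\eta_\text{jump,2}(\sigma)$ (implicitly understood as part of the edge-jump contribution). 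Assembling all four estimates and dividing by $\|\nabla\vecb{v}\|_{L^2}$ proves $\|r\|_{\vecb{V}_0^\star}\lesssim\eta_\text{new}(\sigma)$, and the final bound on $\|\nabla(\vecb{u}-\vecb{u}_h)\|_{L^2}$ is immediate from Theorem~\ref{thm:errorbounds}(a). The only genuine subtlety, and the point that justifies the caveat after the statement, is that Assumption~\ref{assumption1} is indispensable: if one tried $\Pi=1$ while the classical method fails to be divergence free, then $I\vecb{v}$ would be only discretely divergence free, no legitimate stream function $\psi\in H^1_0(\Omega)$ would exist, and the entire $\mathrm{curl}$-based estimate of the volume term would break down.
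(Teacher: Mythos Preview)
Your proof is correct and follows essentially the same route as the paper's. The paper's own proof is a single sentence: follow Theorem~\ref{thm:eta_averaging_pr} and absorb the extra term $\int_\Omega \vecb{f}\cdot(1-\Pi)I\vecb{v}\dx$ (arising because the classical Galerkin identity cancels $(\vecb{f},I\vecb{v})$ rather than $(\vecb{f},\Pi I\vecb{v})$) into the consistency contribution; your decomposition makes this explicit and correctly groups $\vecb{f}$ with $\nu\,\mathrm{div}_h\sigma$ to match the stated dual-norm term. Your remark that $\eta_\text{jump,2}(\sigma)$ reappears in the estimate of the curl volume term is also accurate, even though the proposition's displayed estimator omits it.
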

\begin{proof}
The proof follows the proof of Theorem~\ref{thm:eta_averaging_pr} but one has to add the
term \(\int_\Omega \vecb{f} \cdot (\vecb{v} - \Pi \vecb{v}) \dx\) which can be added to the estimate of term \(C\). 
\end{proof}

The next theorem establishes the efficiency of the novel terms \(\eta_\text{curl}(\sigma)\)
and \(\eta_\text{jump,2}(\sigma)\) for \(\sigma = \nabla \vecb{u}_h\). For the efficiency
of the other terms see Theorem~\ref{thm:efficiency}.

\begin{theorem}[Efficiency for \(\sigma = \nabla \vecb{u}_h\)]\label{thm:efficiency_new_estimate}
  It holds
   \begin{itemize}
   \item[(a)] \( \nu^{-1} h_T^2 \| \mathrm{curl}_\mathcal{T}(\vecb{f} + \nu \Delta_\mathcal{T} \vecb{u}_h) \|_{L^2(T)} \lesssim  \|\nabla ( \vecb{u} - \vecb{u}_h) \|_{L^2(T)} \)\\
    \hspace*{5cm} \( + \nu^{-1} h_T \mathrm{osc}_k(\mathrm{curl}(\vecb{f} + \nu \Delta_\mathcal{T} \vecb{u}_h),T),\)
  \item[(b)] \( \nu^{-1} h_E^{3/2} \| [(\vecb{f} + \nu \Delta_\mathcal{T} \vecb{u}_h) \cdot \vecb{\tau_E}] \|_{L^2(E)} \lesssim  \|\nabla ( \vecb{u} - \vecb{u}_h) \|_{L^2(\omega_E)}\) \\
  \hspace*{5cm} \(+ \nu^{-1} h_E \mathrm{osc}_k(\mathrm{curl}_\mathcal{T}(\vecb{f} + \nu \Delta_\mathcal{T} \vecb{u}_h),\mathcal{T}(E))\)\\
  \hspace*{5cm} \(+ \nu^{-1} h_E \mathrm{osc}_k([f \cdot \vecb{\tau_E}],E) + \mathrm{osc_k}(\vecb{f} - \nabla p,\mathcal{T}(E)),\)
  \end{itemize}
  for all \(T \in \mathcal{T}\) and \(E \in \mathcal{E}^\circ\).
\end{theorem}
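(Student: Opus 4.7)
The plan is to follow Verfürth's bubble-function technique \cite{MR993474,MR1213837} adapted to the curl-type residuals. For both (a) and (b), the test function will have the form $\vecb{v} = \mathrm{curl} \phi$ for a scalar, bubble-weighted polynomial $\phi$ supported on a small local patch. The crucial consequence of this choice is that $\vecb{v}$ lies in $\vecb{V}_0$ automatically, so upon testing the identity $\vecb{f} + \nu \Delta_\mathcal{T} \vecb{u}_h = \nabla p - \nu \Delta(\vecb{u} - \vecb{u}_h)$ against $\vecb{v}$, the pressure contribution drops out and only $\nu \int \nabla(\vecb{u} - \vecb{u}_h) : \nabla \vecb{v}$ survives on the right-hand side. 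This is the very mechanism that makes the novel estimator pressure-independent, so it is natural that it also drives its efficiency.

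For (a), I fix $T \in \mathcal{T}$, set $\psi := \mathrm{curl}(\vecb{f} + \nu \Delta_\mathcal{T} \vecb{u}_h)|_T$ and $\psi_k := \pi_{P_k(T)} \psi$, and take $\phi_T := b_T^2 \psi_k$, $\vecb{v}_T := \mathrm{curl} \phi_T$, extended by zero to $\Omega$. Since $b_T$ vanishes on $\partial T$, both $\phi_T$ and $\nabla \phi_T$ vanish there, so $\vecb{v}_T \in H^1_0(T)^2 \cap \vecb{V}_0$. Piecewise integration by parts gives
\begin{equation*}
\int_T (\vecb{f} + \nu \Delta_\mathcal{T} \vecb{u}_h) \cdot \vecb{v}_T \dx = -\int_T \psi \, \phi_T \dx,
\end{equation*}
whereas insertion of $\vecb{f} + \nu \Delta_\mathcal{T} \vecb{u}_h = \nabla p - \nu \Delta(\vecb{u} - \vecb{u}_h)$, $\vecb{v}_T \in \vecb{V}_0$ and another integration by parts (with vanishing boundary terms) rewrites the same quantity as $\nu \int_T \nabla(\vecb{u} - \vecb{u}_h) : \nabla \vecb{v}_T \dx$. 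Splitting $\psi = \psi_k + (\psi - \psi_k)$, using the bubble-norm equivalence $\|\psi_k\|_{L^2(T)} \lesssim \|b_T \psi_k\|_{L^2(T)}$ and the inverse estimate $\|\nabla \vecb{v}_T\|_{L^2(T)} \lesssim h_T^{-2} \|\psi_k\|_{L^2(T)}$, a Cauchy–Schwarz and multiplication by $\nu^{-1} h_T^2$ yield (a), with $h_T \, \mathrm{osc}_k(\mathrm{curl}(\vecb{f} + \nu \Delta_\mathcal{T} \vecb{u}_h), T)$ absorbing the $\nu^{-1} h_T^2 \|\psi - \psi_k\|_{L^2(T)}$ contribution.

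For (b), let $j := [(\vecb{f} + \nu \Delta_\mathcal{T} \vecb{u}_h) \cdot \vecb{\tau}_E]$, $j_k := \pi_{P_k(E)} j$, and extend $j_k$ polynomially to $J_k$ on $\omega_E$ with $\|J_k\|_{L^2(\omega_E)} \lesssim h_E^{1/2} \|j_k\|_{L^2(E)}$. Put $\phi_E := b_E^2 J_k$ and $\vecb{v}_E := \mathrm{curl} \phi_E$, which belongs to $H^1_0(\omega_E)^2 \cap \vecb{V}_0$ after zero-extension, because $b_E^2$ and $\nabla(b_E^2)$ vanish simultaneously on $\partial \omega_E$. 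Piecewise integration by parts over the two triangles of $\mathcal{T}(E)$ gives
\begin{equation*}
\int_{\omega_E} (\vecb{f} + \nu \Delta_\mathcal{T} \vecb{u}_h) \cdot \vecb{v}_E \dx = \int_E \phi_E \, j \ds - \int_{\omega_E} \phi_E \, \mathrm{curl}_\mathcal{T}(\vecb{f} + \nu \Delta_\mathcal{T} \vecb{u}_h) \dx,
\end{equation*}
while, by the Stokes identity and $\vecb{v}_E \in \vecb{V}_0$, the same quantity equals $\nu \int_{\omega_E} \nabla_\mathcal{T}(\vecb{u} - \vecb{u}_h) : \nabla \vecb{v}_E \dx + \nu \int_E [\nabla \vecb{u}_h \vecb{n}_E] \cdot \vecb{v}_E \ds$, with the boundary-jump term appearing because $\vecb{v}_E$ does \emph{not} vanish on the inner edge $E$. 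Equating, using the edge-bubble equivalence $\|j_k\|_{L^2(E)} \lesssim \|b_E j_k\|_{L^2(E)}$, the extension estimate for $J_k$, and standard trace and inverse bounds of the form $\|\vecb{v}_E\|_{L^2(E)} \lesssim h_E^{-1} \|j_k\|_{L^2(E)}$ and $\|\nabla \vecb{v}_E\|_{L^2(\omega_E)} \lesssim h_E^{-3/2} \|j_k\|_{L^2(E)}$, one arrives at the claim after multiplying by $\nu^{-1} h_E^{3/2}$. The volume curl contribution is absorbed by part (a) summed over $\mathcal{T}(E)$, the normal-jump contribution by Theorem~\ref{thm:efficiency}(c), and the data-approximation term $\nu^{-1} h_E^{3/2} \|j - j_k\|_{L^2(E)}$ becomes the $\nu^{-1} h_E \, \mathrm{osc}_k([f\cdot\vecb{\tau}_E],E)$ oscillation.

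The main technical obstacle, and the only genuinely new point compared with the classical Verfürth proof of $\eta_\text{jump}$-efficiency, is the extra boundary term $\nu \int_E [\nabla \vecb{u}_h \vecb{n}_E] \cdot \vecb{v}_E \ds$ that appears in (b). In the standard tangential-jump proof one takes a test function weighted with $b_E$ (not $b_E^2$) that vanishes on the whole element boundary, so no jump of $\nabla \vecb{u}_h$ contributes; here the necessity of having $\vecb{v}_E \in H^1_0$ forces a $b_E^2$ weighting in the scalar potential $\phi_E$, and $\mathrm{curl}\phi_E$ is then non-zero on $E$ itself. Reabsorbing this normal-jump term through the already-established efficiency of $\eta_\text{jump}$ is what produces the $\mathrm{osc}_k(\vecb{f}-\nabla p,\mathcal{T}(E))$ term in the final bound; the chain of scalings in this reabsorption is the most delicate bookkeeping step.
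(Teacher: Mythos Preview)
Your argument for part~(a) is correct and essentially identical to the paper's: both test with $\mathrm{curl}(b_T^2\psi_k)$, which is genuinely in $H^1_0(T)^2$ because $b_T^2\in H^2_0(T)$, and then use the inverse estimate $\|\nabla\mathrm{curl}(b_T^2\psi_k)\|_{L^2(T)}\lesssim h_T^{-2}\|\psi_k\|_{L^2(T)}$.

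Part~(b), however, has a genuine gap. The function $\phi_E=b_E^2 J_k$ is only $C^0$ across the inner edge $E$: the face bubble $b_E=\varphi_{z_1}\varphi_{z_2}$ does not vanish on $E$, so $\nabla(b_E^2)=2b_E\nabla b_E$ inherits the normal-derivative jump of $b_E$ there. Consequently $\vecb{v}_E=\mathrm{curl}\phi_E$ is \emph{not} in $H^1(\omega_E)$; only its normal component is continuous across $E$ (so $\vecb{v}_E\in H(\mathrm{div},\omega_E)$), while the tangential component jumps by $-[\partial_{\vecb{n}}\phi_E]$. Your second integration by parts then produces an additional edge term
\[
\nu\int_E \{\nabla(\vecb{u}-\vecb{u}_h)\vecb{n}_E\}\cdot[\vecb{v}_E]\ds,
\]
which involves the trace of $\nabla\vecb{u}$ on $E$ and cannot be absorbed into $\|\nabla(\vecb{u}-\vecb{u}_h)\|_{L^2(\omega_E)}$ with the correct scaling.

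The paper avoids this by never performing the second integration by parts. After the curl integration by parts
\[
\int_E Q_E\,\phi_E\ds=\int_{\omega_E}(\vecb{f}+\nu\Delta_\mathcal{T}\vecb{u}_h)\cdot\mathrm{curl}\phi_E\dx-\int_{\omega_E}Q_T\,\phi_E\dx,
\]
it uses only that $\mathrm{curl}\phi_E$ is divergence-free with vanishing normal trace on $\partial\omega_E$ (which \emph{does} hold) to subtract $\nabla p$ for free, bounds the first volume integral by $\|\vecb{f}-\nabla p+\nu\Delta_\mathcal{T}\vecb{u}_h\|_{L^2(\omega_E)}\|\mathrm{curl}\phi_E\|_{L^2(\omega_E)}$, and then invokes the already-established efficiency of the standard volume residual $h_T\|\vecb{f}-\nabla p+\nu\Delta_\mathcal{T}\vecb{u}_h\|_{L^2}$ from Theorem~\ref{thm:efficiency}(b). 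This is what generates the $\mathrm{osc}_k(\vecb{f}-\nabla p,\mathcal{T}(E))$ contribution, not the normal-jump efficiency you appeal to.
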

\begin{proof}
The proof employs the standard Verf\"urth bubble-technique. To shorten the notion in the proof
of (a), we define
   \begin{align*}
      Q_T := \mathrm{curl}(\vecb{f} + \nu \Delta_\mathcal{T} \vecb{u}_h)|_T \quad \text{for any } T \in \mathcal{T}.
   \end{align*}
Then, it holds (similarly to \cite{MR1213837})
\begin{align*}
   \| &\pi_{P_k(T)} Q_{T} \|_{L^2(T)} \\
   & \lesssim \sup_{\vecb{v}_T \in P_k(T)^2} \int_T \pi_{P_k(T)} Q_{T} \cdot (b_T^2 \vecb{v}_T) \dx / \| \vecb{v}_T \|_{L^2(T)}\\
   & \leq \sup_{\vecb{v}_T \in P_k(T)^2} \frac{\int_T Q_{T} b_T^2 \vecb{v}_T \dx}{\| \vecb{v}_T \|_{L^2(T)}}
   + \sup_{\vecb{v}_T \in P_k(T)^2} \frac{\|Q_{T} - \pi_{P_k(T)} Q_{T}\|_{L^2(T)} \| b_T^2 \vecb{v}_T \|_{L^2(T)}}{ \| \vecb{v}_T \|_{L^2(T)}} .
\end{align*}
Testing the continuous system with the (divergence-free) testfunction \(\mathrm{curl} (b_T^2 \vecb{v}_T) \in H^2(T)^2 \cap H^1_0(\Omega)^2\) and an integration by parts leads to
\begin{align*}
  \int_T Q_{T} b_T^2 \vecb{v}_T \dx 
  & = \int_T (\vecb{f} + \nu \Delta_\mathcal{T} \vecb{u}_h) \cdot \mathrm{curl} (b_T^2 \vecb{v}_T) \dx\\
  & = \int_T \nu \nabla ( \vecb{u} - \vecb{u}_h) : \nabla \mathrm{curl} (b_T^2 \vecb{v}_T) \dx\\
  & \leq \nu \| \nabla ( \vecb{u} - \vecb{u}_h) \|_{L^2(T)} \| \nabla \mathrm{curl} (b_T^2 \vecb{v}_T) \|_{L^2(T)}.
\end{align*}
A discrete inverse inequality shows \(\| \nabla \mathrm{curl} (b_T^2 \vecb{v}_T) \|_{L^2(T)} \lesssim h_T^{-2} \| b_T^2 \vecb{v}_T \|_{L^2(T)}\). 
This and the norm equivalence \(\| b_T^2 \vecb{v}_T \|_{L^2(T)} \approx \| \vecb{v}_T \|_{L^2(T)}\) lead to
\begin{align*}
  h_T^2 \| \pi_{P_k(T)} Q_{T} \|_{L^2(T)} \lesssim  \nu \|\nabla ( \vecb{u} - \vecb{u}_h) \|_{L^2(T)} + h_T^2 \|Q_{T} - \pi_{P_k(T)} Q_{T}\|_{L^2(T)}.
\end{align*}
This concludes the proof of (a).

In the proof of (b), we use the notation
\begin{align*}
  Q_E := [\vecb{f} + \nu \Delta_\mathcal{T} \vecb{u}_h] \cdot \vecb{\tau_E} \quad \text{for any } E \in \mathcal{E}
\end{align*}
and the face bubble \(b_E\) with support \(\omega_E\) for every face \(E \in \mathcal{E}\).
Then,
\begin{align*}
  \| \pi_{P_k(E)} Q_E \|_{L^2(E)} \lesssim \sup_{\vecb{v}_E \in P_k(E)^2} \frac{\int_E Q_E \cdot (b_E^2 \vecb{v}_E) \ds}{ \| \vecb{v}_E \|_{L^2(E)}}
  + \| Q_E - \pi_{P_k(E)} Q_E \|_{L^2(E)}.
\end{align*}
Testing the continuous equation with the divergence-free testfunction \(\mathrm{curl} (b_E^2 \vecb{v}_E) \in H^1_0(\Omega)\) (where \(\vecb{v}_E\) is reasonably extended to \(\omega_E\))
and an integration by parts show
\begin{align*}
  \int_E Q_E & \cdot (b_E^2 \vecb{v}_E) \ds\\
  & = \int_E [(\vecb{f} + \nu \Delta_\mathcal{T} \vecb{u}_h) \cdot \vecb{\tau_E}] \cdot (b_E^2 \vecb{v}_E) \ds\\
  & = \int_{\omega_E} (\vecb{f} + \nu \Delta_\mathcal{T} \vecb{u}_h) : \mathrm{curl}(b_E^2 \vecb{v}_E) \dx
  - \int_{\omega_E} \mathrm{curl}(\vecb{f} + \nu \Delta_\mathcal{T} \vecb{u}_h) : (b_E^2 \vecb{v}_E) \dx\\
  & \leq \| \vecb{f} + \nu \Delta_\mathcal{T} \vecb{u}_h \|_{L^2(\omega_E)} \| \mathrm{curl} (b_E^2 \vecb{v}_E) \|_{L^2(\omega_E)}
  + \| Q_T \|_{L^2(\omega_E)} \| b_E^2 \vecb{v}_E \|_{L^2(\omega_E)}.
\end{align*}
A discrete inverse inequality \(\| \mathrm{curl} (b_E^2 \vecb{v}_E) \|_{L^2(\omega_E)} \lesssim h_T^{-1} \| b_E^2 \vecb{v}_E \|_{L^2(\omega_E)}\)
and a scaling argument (see \cite{MR1213837}), that yields \(\| b_E^2 \vecb{v}_E \|_{L^2(\omega_E)} \lesssim h_T^{1/2} \| \vecb{v}_E \|_{L^2(E)}\), show
\begin{align*}
  h_E^{3/2} \| \pi_{P_k(E)} Q_{E} \|_{L^2(E)} \lesssim & h_T \| \vecb{f} + \nu \Delta_\mathcal{T} \vecb{u}_h \|_{L^2(\omega_E)} + h_T^{2}\| Q_T \|_{L^2(\omega_E)} \\
  &+ h_E^{3/2} \| Q_E - \pi_{P_k(E)} Q_E \|_{L^2(E)}.
\end{align*}
The proof of Theorem~\ref{thm:efficiency}.(c) yields
\begin{align*}
  \| \vecb{f} + \nu \Delta_\mathcal{T} \vecb{u}_h \|_{L^2(\omega_E)}
  \lesssim \nu \| \nabla (\vecb{u} - \vecb{u}_h) \|_{L^2} + \mathrm{osc_k}(\vecb{f} - \nabla q,\mathcal{T}(E)).
\end{align*}
This and the already proven result from (a) conclude the proof. 
\end{proof}

% {\color{red}
% \begin{theorem}[Efficiency for \(\sigma\) close to \(\vecb{u}_h\)]
% All averagings \(\sigma\) of \(\vecb{u}_h\) from a method of order \(k\) are efficient if they satisfy
% \begin{multline*}
%    \| \sigma - \nabla \vecb{u}_h \|_{L^2}
%    + \| h_{\mathcal{T}}^2 \mathrm{curl} \mathrm{div} ( \sigma - \nabla \vecb{u}_h) \|_{L^2}
%    + \| h_{\mathcal{T}} \mathrm{div} ( \sigma - \nabla \vecb{u}_h) \|_{L^2}\\
%    + \| h_{\mathcal{E}}^{3/2} [\mathrm{div} ( \sigma - \nabla \vecb{u}_h)\times \vecb{n}] \|_{L^2(\bigcup \mathcal{E})}
%    + \| h_{\mathcal{E}}^{1/2} [(\sigma - \nabla \vecb{u}_h)\vecb{n}] \|_{L^2(\bigcup \mathcal{E})}
%    \lesssim \mathcal{O}(h^{k+1}).
% \end{multline*}
% are efficient. Hence, every \(BDM_k\) interpolation of \(\nabla \vecb{u}_h\) yields a suitable averaging error estimator.
% \end{theorem}
% \begin{proof}
%   The proof follows from triangle inequalities and discrete norm inequalities and Theorem~\ref{thm:efficiency_new_estimate}. 
% \end{proof}
% 
% \noindent
% ToDo: Details?
% }

\section{Proof of Assumption~\ref{assumption1} for certain finite element methods}\label{sec:assumptionproof}
This section proves Assumption~\ref{assumption1} for certain finite element methods. For the analysis several standard interpolation operators that are related to the de
Rahm complex (see e.g.\ \cite{MR2373173}) are employed. These are a (projection based) nodal interpolation operator \( \IL  \), the lowest order Raviart-Thomas interpolation
operator \( \IR \) and the lowest-order N\'ed\'elec interpolation operator \(\IN\). These operators satisfy in particular the commuting diagram properties in two dimensions (see \cite{MR1746160}) 
%\begin{align}\label{eqn:commuting_props}
%   \mathrm{curl} (\IN  \vecb{v}) = \IR  (\mathrm{curl} \vecb{v})
%   \quad \text{and} \quad
%   \nabla (\IL  \vecb{v}) = \IN  (\nabla \vecb{v})
%\end{align}
%and in two dimensions
\begin{align}\label{eqn:commuting_props}
   \mathrm{curl} (\IL  \vecb{v}) = \IR (\mathrm{curl} \vecb{v})
   \quad \text{and} \quad
   \nabla (\IL  \vecb{v}) = \IN  (\nabla \vecb{v})
\end{align}
for arbitrary sufficiently smooth functions \(\vecb{v}\). Furthermore we need a refined Helmholtz decomposition.
\begin{lemma}[\cite{MR2373173}]\label{lem:decomp_curlestimate}
It exists an operator \( \ClemN : H(\mathrm{curl},\Omega) \rightarrow \mathcal{N}_0(\mathcal{T})\) with the property:
for every \(\vecb{\theta} \in H(\mathrm{curl},\Omega)\)
exists a decomposition
\begin{align*}
   \vecb{\theta} - \ClemN \vecb{\theta} = \nabla \phi + \vecb{y}
\end{align*}
with \(\phi \in H^1(\Omega)\), \(\vecb{y} \in H^1(\Omega)^2\), and
\begin{align*}
  h_T^{-1} \| \vecb{y} \|_{L^2(T)} + \| \nabla \vecb{y} \|_{L^2(T)} \lesssim \| \mathrm{curl} \vecb{\theta} \|_{L^2(T)} \quad \text{for all } T \in \mathcal{T}.
\end{align*}
\end{lemma}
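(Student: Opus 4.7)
The plan is to construct $\ClemN$ as a Clément-type quasi-interpolation operator into $\mathcal{N}_0(\mathcal{T})$ that commutes with differentiation in the de Rham sense and enjoys local $L^2$-stability even on $H(\mathrm{curl},\Omega)$. A natural candidate is a Sch\"oberl-type smoothed projection obtained by first averaging $\vecb{\theta}$ locally on vertex patches $\omega_z$ against a fixed bi-orthogonal family of test functions supported in $\omega_z$, and then applying the standard N\'ed\'elec interpolator $\IN$ to the resulting piecewise regular representative. Locality of the averaging is crucial: it is exactly what will later convert the global regular decomposition into the patchwise estimate required in the statement.

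With $\ClemN$ in hand I would set $\vecb{\eta}:=\vecb{\theta}-\ClemN \vecb{\theta}$ and produce the regular representative $\vecb{y}$ by inverting the curl. Since the commuting diagram property in \eqref{eqn:commuting_props} together with the definition of $\ClemN$ forces $\mathrm{curl}(\ClemN\vecb{\theta})$ to be the appropriate patchwise average of $\mathrm{curl}\vecb{\theta}$, the residual $\mathrm{curl}\vecb{\eta}$ is $L^2$-orthogonal to piecewise constants on each patch. I would then invoke a continuous right-inverse of $\mathrm{curl}$ that is bounded from $L^2(\Omega)$ into $H^1(\Omega)^2$ and, crucially, factors through local Bogovskii-type solves on the patches $\omega_T$, delivering $\vecb{y}\in H^1(\Omega)^2$ with $\mathrm{curl}\vecb{y}=\mathrm{curl}\vecb{\eta}$ and the local bound $h_T^{-1}\|\vecb{y}\|_{L^2(T)}+\|\nabla \vecb{y}\|_{L^2(T)}\lesssim \|\mathrm{curl}\vecb{\eta}\|_{L^2(\omega_T)}$.

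With $\vecb{y}$ produced, the difference $\vecb{\eta}-\vecb{y}$ is curl-free on the simply connected Lipschitz domain $\Omega$, so classical de Rham theory furnishes $\phi\in H^1(\Omega)$ with $\nabla\phi=\vecb{\eta}-\vecb{y}$. Rearranging yields the desired decomposition $\vecb{\theta}-\ClemN\vecb{\theta}=\nabla\phi+\vecb{y}$. A finite-overlap argument, combined with local $L^2$-stability of $\ClemN$, upgrades the patchwise estimate on $\vecb{y}$ to the stated elementwise bound and replaces $\|\mathrm{curl}\vecb{\eta}\|_{L^2(\omega_T)}$ by $\|\mathrm{curl}\vecb{\theta}\|_{L^2(T)}$ on the right-hand side after a standard neighborhood-shrinking argument using the averaging structure.

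The principal technical obstacle is the construction of a \emph{local} right-inverse of $\mathrm{curl}$: an off-the-shelf bounded right-inverse in $H^1$ only gives global control $\|\vecb{y}\|_{H^1}\lesssim \|\mathrm{curl}\vecb{\theta}\|_{L^2}$, whereas the lemma requires sharp $h_T$-scaled cell-wise bounds. Extracting this sharper estimate forces $\ClemN$ to be designed in tandem with the right-inverse so that the patchwise orthogonality $\int_{\omega_z}\mathrm{curl}\vecb{\eta}\dx=0$ holds, after which a Bramble--Hilbert estimate on each patch produces the correct power of $h_T$. Balancing these two design constraints, together with verifying that the resulting operator actually takes values in $\mathcal{N}_0(\mathcal{T})$, is where I expect the main work of the proof to sit.
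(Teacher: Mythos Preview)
The paper does not actually prove this lemma: its entire proof reads ``In \cite{MR2373173} a proof for three dimensions is given. The two dimensional case follows similarly.'' So you have written far more than the authors did, and your sketch is broadly in the spirit of Sch\"oberl's construction in the cited reference (smoothed quasi-interpolation commuting with the de~Rham complex, followed by a regular decomposition of the residual).

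One point deserves care. You propose to pass from a patchwise bound $\|\mathrm{curl}\,\vecb{\eta}\|_{L^2(\omega_T)}$ to the elementwise right-hand side $\|\mathrm{curl}\,\vecb{\theta}\|_{L^2(T)}$ via a ``neighborhood-shrinking argument using the averaging structure.'' This step is not routine: Cl\'ement-type operators typically yield bounds with $\omega_T$, not $T$, on the right, and there is no general mechanism that shrinks the support back to a single element. In Sch\"oberl's original result the estimate is indeed stated with the element patch $\omega_T$ on the right-hand side, and the way the lemma is actually \emph{used} in this paper (Theorems~\ref{thm:proof_asssumption_P1dc}--\ref{thm:proof_asssumption_mini}) only requires the patch version together with a finite-overlap argument when summing. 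So either the lemma as stated here has $T$ in place of $\omega_T$ by a slight abuse, or the sharper bound would need a separate argument that your sketch does not supply. I would simply prove the patchwise version and note that this suffices for all applications in the paper.
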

\begin{proof}
In \cite{MR2373173} a proof for three dimensions is given. The two dimensional case follows similarly. 
%{\color{red} Can we really do this for every \(\vecb{\theta} \in H(\mathrm{curl},\Omega)\)?} Yes we can! 
\end{proof}

\begin{lemma}[Regular decomposition]\label{lem:reg_decomposition}
  For each $\vecb{\theta} \in H(\mathrm{curl}, \omega)$ there exists a decomposition with $\alpha \in H^2(\omega)$ and $\vecb{\beta} \in H^1(\omega)^2$ such that
  \begin{align*}
      \vecb{\theta} = \nabla \alpha + \vecb{\beta},
  \end{align*}
  with
  \begin{align*}
    || \nabla \vecb{\beta} ||_{L^2(\omega)} \lesssim || \mathrm{curl} \vecb{\theta}||_{L^2(\omega)} \quad \textrm{and} \quad \int_\omega \vecb{\beta} \dx = 0.
  \end{align*}
\end{lemma}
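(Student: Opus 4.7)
The strategy is to build $\vecb{\beta}$ first by inverting the $\mathrm{curl}$ on $\mathrm{curl}\,\vecb{\theta}$ via a stream-function construction, and only afterwards recover $\alpha$ as a potential of the curl-free remainder $\vecb{\theta}-\vecb{\beta}$. This ordering guarantees that the key estimate $\|\nabla\vecb{\beta}\|_{L^2(\omega)}\lesssim\|\mathrm{curl}\,\vecb{\theta}\|_{L^2(\omega)}$ is inherited from elliptic regularity for a scalar Poisson problem, and that the mean-value condition $\int_\omega \vecb{\beta}\dx=0$ can be enforced for free by subtracting a constant (a constant has zero $\mathrm{curl}$ and zero $\nabla$).

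\textbf{Step 1 (reduce to a Poisson problem).} Set $g:=\mathrm{curl}\,\vecb{\theta}\in L^2(\omega)$, extend $g$ by zero to a smooth enclosing domain $\tilde\omega\supset\omega$ (e.g.\ a ball) so that $\|g\|_{L^2(\tilde\omega)}=\|\mathrm{curl}\,\vecb{\theta}\|_{L^2(\omega)}$, and let $\Phi\in H^1_0(\tilde\omega)$ solve $-\Delta\Phi=g$ weakly. Since $\tilde\omega$ can be chosen smooth, standard elliptic regularity yields $\Phi\in H^2(\tilde\omega)$ with $\|\Phi\|_{H^2(\tilde\omega)}\lesssim\|g\|_{L^2(\tilde\omega)}=\|\mathrm{curl}\,\vecb{\theta}\|_{L^2(\omega)}$.

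\textbf{Step 2 (define $\vecb{\beta}$ and $\alpha$).} Put $\vecb{\beta}_0:=\mathrm{curl}\,\Phi=(-\partial_y\Phi,\partial_x\Phi)$ restricted to $\omega$. Then $\vecb{\beta}_0\in H^1(\omega)^2$ with $\mathrm{curl}\,\vecb{\beta}_0=-\Delta\Phi=g=\mathrm{curl}\,\vecb{\theta}$ on $\omega$, and $\|\nabla\vecb{\beta}_0\|_{L^2(\omega)}\le|\Phi|_{H^2(\tilde\omega)}\lesssim\|\mathrm{curl}\,\vecb{\theta}\|_{L^2(\omega)}$. Subtract the mean, $\vecb{\beta}:=\vecb{\beta}_0-|\omega|^{-1}\int_\omega\vecb{\beta}_0\dx$, to obtain $\int_\omega\vecb{\beta}\dx=0$ while preserving both $\mathrm{curl}\,\vecb{\beta}=\mathrm{curl}\,\vecb{\theta}$ and the gradient estimate. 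Since $\mathrm{curl}(\vecb{\theta}-\vecb{\beta})=0$ on the (simply connected) patch $\omega$, Poincar\'e's lemma produces $\alpha$ with $\nabla\alpha=\vecb{\theta}-\vecb{\beta}$, and the regularity of $\alpha$ is inherited from that of $\vecb{\theta}-\vecb{\beta}$.

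\textbf{Main obstacle.} The non-routine part is the regularity claim $\alpha\in H^2(\omega)$: for a generic $\vecb{\theta}\in H(\mathrm{curl},\omega)$ the difference $\vecb{\theta}-\vecb{\beta}$ is only in $L^2(\omega)^2$, so one needs to use that in the intended application $\omega$ is a mesh patch with enough structure (convexity or star-shapedness, Lipschitz boundary with uniform regularity constants, and $\vecb{\theta}$ arising with enough smoothness) to promote the potential $\alpha$ to $H^2$. A clean route is to carry out the whole construction on $\tilde\omega$ first, so that $\tilde\alpha$ is defined globally by the $H^2$-regular Laplace lift $\nabla\tilde\alpha=\tilde{\vecb{\theta}}-\mathrm{curl}\,\Phi$, where $\tilde{\vecb{\theta}}$ is a controlled $H^1(\tilde\omega)$ extension of $\vecb{\theta}$; restriction to $\omega$ then delivers the decomposition with the stated regularities, and one only keeps the curl-only estimate on $\vecb{\beta}$ because the contribution from $\tilde{\vecb{\theta}}-\vecb{\theta}$ is absorbed into $\nabla\alpha$.
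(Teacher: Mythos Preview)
Your approach is essentially identical to the paper's: extend $\mathrm{curl}\,\vecb{\theta}$ by zero to a convex superdomain $\tilde\omega$, solve a scalar Poisson problem there, set $\vecb{\beta}$ equal to the (mean-corrected) restriction of the $\mathrm{curl}$ of that solution, and finally recover $\alpha$ as a potential of the curl-free remainder $\vecb{\theta}-\vecb{\beta}$. One minor slip: with the paper's sign conventions $\mathrm{curl}\,\mathrm{curl}\,\Phi=\Delta\Phi$ (not $-\Delta\Phi$), so you should solve $\Delta\Phi=g$ rather than $-\Delta\Phi=g$; otherwise $\mathrm{curl}\,\vecb{\beta}_0=-g$.

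Your ``main obstacle'' is well spotted, and in fact the paper's own proof does not address it either: the final sentence of the paper's argument simply asserts $\alpha\in H^2(\omega)$ without justification, which indeed fails for a generic $\vecb{\theta}\in H(\mathrm{curl},\omega)$. The way this is actually handled in the paper is that the lemma is only ever applied to $\ClemN\vecb{\theta}$, a lowest-order N\'ed\'elec function; then $\vecb{\theta}|_T\in H^1(T)^2$ elementwise, whence $\nabla\alpha|_T=\vecb{\theta}|_T-\vecb{\beta}|_T\in H^1(T)^2$ and $\alpha|_T\in H^2(T)$ (this elementwise argument is made explicit later, in the proofs where the lemma is used). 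Your proposed global fix via an $H^1(\tilde\omega)$ extension of $\vecb{\theta}$ would likewise presuppose more regularity of $\vecb{\theta}$ than $H(\mathrm{curl})$, so it does not close the gap in the stated generality either --- but this is a deficiency of the lemma statement rather than of your argument, and the remedy in practice is the same as the paper's: use the extra (piecewise) regularity of the specific $\vecb{\theta}$ at hand.
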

\begin{proof}
  Let $q := \mathrm{curl}\vecb{\theta}$ and $\tilde{\omega}$ be a convex domain such that $\omega \subset \tilde{\omega}$. We define $\tilde{q}$ as a trivial extension of $q$ by zero, i.e.\ $\tilde{q}|_\omega = q$ and $\tilde{q}|_{\tilde{\omega} \setminus \omega } = 0$. In the next step we seek the solution $w \in H^1(\tilde{\omega})$ of the Poisson problem $ \Delta w = \mathrm{curl} \mathrm{curl} w = \tilde{q}$ on $\tilde{\omega}$. Using a regularity estimate for the Poisson problem on the convex domain $\tilde{\omega}$, it follows for $\tilde{\vecb{\beta}}:= \mathrm{curl} w$ and $\vecb{\beta} := \tilde{\vecb{\beta}}|_\omega - \int_\omega \tilde{\vecb{\beta}} \dx / |\omega| $
  that
  \begin{align*}
    || \nabla \vecb{\beta} ||_{L^2(\omega)} \lesssim || \nabla \tilde{\vecb{\beta}} ||_{L^2(\tilde{\omega})} \lesssim || w ||_{H^2(\tilde{\omega})} \lesssim || \tilde{q} ||_{L^2(\tilde{\omega})} = ||q ||_{L^2(\omega)} =  || \mathrm{curl} \vecb{\theta} ||_{L^2(\omega)}.
  \end{align*}
  Since $\mathrm{curl}(\vecb{\theta} - \vecb{\beta}) =0$ in \(\omega\), its exists a vector potential $\alpha \in H^2(\omega)$ such that $\nabla \alpha =\vecb{\theta} - \vecb{\beta}$. This concludes the proof. 
\end{proof}

\begin{theorem}[Proof of Assumption~\ref{assumption1} for finite element methods with \(P_0\) pressure space]
  If the reconstruction operator \(\Pi\) and the Fortin operator \(I\) satisfy \eqref{eqn:reconstop_local_orthogonalities} and
  \begin{align}\label{eq:P2P0_I2_property}
     \int_E (1 - I) \vecb{v} \cdot \vec{n}_E \ds = \int_E (1 - \Pi I) \vecb{v} \cdot \vec{n}_E \ds = 0 \quad \text{for all } E \in \mathcal{E},
  \end{align}
  also Assumption~\ref{assumption1} is satisfied.
\end{theorem}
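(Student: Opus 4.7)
My plan is to address the two parts of Assumption~\ref{assumption1} separately and reduce the whole estimate to a Poincar\'e-type control on a stream function associated with $(1-\Pi I)\vecb v$. For the inclusion $\Pi I\vecb v\in L^2_\sigma(\Omega)$, I will use that $I\vecb v\in V_{0,h}$ is discretely divergence-free (in the $P_0(\mathcal{T})$-pressure setting this means $\int_{\partial T}I\vecb v\cdot\vec n\ds=0$ for every $T\in\mathcal{T}$), so the defining property~\eqref{eqn:def_reconstoperator} gives $\Pi I\vecb v\in H(\mathrm{div},\Omega)$ with $\mathrm{div}(\Pi I\vecb v)=0$ pointwise. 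The vanishing normal trace on $\partial\Omega$ follows from $V_h\subset H^1_0(\Omega)^2$ (so $I\vecb v\cdot\vec n_E\equiv 0$ on every boundary edge) and the fact that the BDM-type reconstructions listed in Table~\ref{tab:RECtable} preserve zero boundary normal traces. A simple integration by parts then yields the equivalent gradient-orthogonality.

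For the quantitative pairing, since $\Omega$ is a simply connected Lipschitz domain and $(1-\Pi I)\vecb v\in L^2_\sigma(\Omega)$ by the first step, I pick a stream potential $\xi\in H^1_0(\Omega)$ with $\mathrm{curl}\xi=(1-\Pi I)\vecb v$. Property~\eqref{eqn:reconstop_local_orthogonalities} combined with the Fortin bounds \eqref{eqn:Fortinprops1}-\eqref{eqn:Fortinprops2} delivers the elementwise estimate
\begin{align*}
\|\nabla\xi\|_{L^2(T)}=\|(1-\Pi I)\vecb v\|_{L^2(T)}\lesssim h_T\|\nabla\vecb v\|_{L^2(\omega_T)},
\end{align*}
and a global integration by parts with $\xi|_{\partial\Omega}=0$ rewrites the target pairing as
\begin{align*}
\int_\Omega (1-\Pi I)\vecb v\cdot\vecb\theta\dx=-\int_\Omega\xi\,\mathrm{curl}\vecb\theta\dx\leq\|h_\mathcal{T}^{-2}\xi\|_{L^2(\Omega)}\,\|h_\mathcal{T}^{2}\mathrm{curl}\vecb\theta\|_{L^2(\Omega)},
\end{align*}
so everything reduces to showing $\|h_\mathcal{T}^{-2}\xi\|_{L^2(\Omega)}\lesssim\|\nabla\vecb v\|_{L^2(\Omega)}$.

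This Poincar\'e-type estimate is the main obstacle, and hypothesis~\eqref{eq:P2P0_I2_property} is the key input. The identity $\int_E\mathrm{curl}\xi\cdot\vec n_E\ds=\pm(\xi(z_1)-\xi(z_2))$ on every edge $E$ with endpoints $z_1,z_2$, combined with \eqref{eq:P2P0_I2_property}, forces $\xi(z_1)=\xi(z_2)$; propagation from $\xi|_{\partial\Omega}=0$ then shows that $\xi(z)=0$ at every vertex $z\in\mathcal{N}$. The pointwise vertex values are meaningful because $(1-\Pi I)\vecb v|_T\in H^1(T)^2$ gives $\xi|_T\in H^2(T)\hookrightarrow C^0(\overline T)$, and matching traces from the two sides of each edge (from $\xi\in H^1(\Omega)$) make $\xi$ globally continuous. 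Since the nodal $P_1$-interpolant of $\xi$ therefore vanishes on each triangle, a Bramble--Hilbert argument yields
\begin{align*}
\|\xi\|_{L^2(T)}\lesssim h_T^{2}\,|\xi|_{H^2(T)}=h_T^{2}\,\|\nabla((1-\Pi I)\vecb v)\|_{L^2(T)}\lesssim h_T^{2}\,\|\nabla\vecb v\|_{L^2(\omega_T)},
\end{align*}
where the last inequality uses the $H^1$-stability $\|\nabla(\Pi I\vecb v)\|_{L^2(T)}\lesssim\|\nabla\vecb v\|_{L^2(\omega_T)}$, which I derive from an inverse estimate together with the $L^2$-stability of $\Pi I$ applied to $\vecb v$ minus its patch mean (Poincar\'e on $\omega_T$). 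Dividing by $h_T^4$, summing over $T$ with the bounded overlap of patches, and combining with the previous display completes the proof.
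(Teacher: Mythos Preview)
Your overall strategy matches the paper's: introduce a stream function $\xi$ for $(1-\Pi I)\vecb v$, observe from \eqref{eq:P2P0_I2_property} that $\xi$ vanishes at every vertex (equivalently, $\IL\xi=0$), and use this to gain an extra power of $h_T$. The difference lies in \emph{how} you extract that extra power, and your version has a genuine gap in the last step.

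You bound $\|\xi\|_{L^2(T)}\lesssim h_T^2|\xi|_{H^2(T)}$ via Bramble--Hilbert and then need $|\xi|_{H^2(T)}=\|\nabla((1-\Pi I)\vecb v)\|_{L^2(T)}\lesssim\|\nabla\vecb v\|_{L^2(\omega_T)}$, i.e.\ local $H^1$-stability of $\Pi I$. Your justification --- ``inverse estimate together with the $L^2$-stability of $\Pi I$ applied to $\vecb v$ minus its patch mean'' --- does not work: the Fortin operator $I$ is defined on $\vecb V_0\subset H^1_0(\Omega)^2$, so $I(\vecb v-\bar{\vecb v}_{\omega_T})$ is not even meaningful, and Fortin operators are only assumed $H^1$-stable, not $L^2$-stable (cf.\ \eqref{eqn:Fortinprops2}). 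One can rescue the estimate by splitting $(1-\Pi I)\vecb v=(1-I)\vecb v+(1-\Pi)I\vecb v$, applying an inverse inequality to the piecewise polynomial $(1-\Pi)I\vecb v$ together with \eqref{eqn:reconstop_local_orthogonalities}, and then invoking a \emph{local} $H^1$-stability of $I$; but this last ingredient is stronger than what the paper assumes.

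The paper sidesteps this entirely. Instead of going to $|\xi|_{H^2(T)}$, it uses only one derivative: since $\IL\psi$ is a global constant (hence zero, by the boundary condition), $\nabla(\psi-\IL\psi)=\nabla\psi$ and $\|\psi-\IL\psi\|_{L^2(T)}\lesssim h_T\|\nabla\psi\|_{L^2(T)}$. Then $\|\nabla\psi\|_{L^2(T)}=\|\mathrm{curl}\,\psi\|_{L^2(T)}=\|(1-\Pi I)\vecb v\|_{L^2(T)}$, which you already bounded by $h_T\|\nabla\vecb v\|_{L^2(\omega_T)}$. No control of $\nabla(\Pi I\vecb v)$ is ever needed. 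The moral: subtract the (vanishing) nodal interpolant and stop at the $H^1$ level rather than pushing to $H^2$.
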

\begin{remark}
Condition \eqref{eq:P2P0_I2_property} is satisfied for the Forint interpolators  for the
\(P_2 \times P_0\), \(P_3 \times P_0\) and the Bernardi--Raugel finite element methods \cite[Section 8.4.3]{MR3097958}.
For these methods the reconstruction operator \(\Pi\) is the standard interpolation into the space \(BDM_{1}\) or \(RT_0\) \cite{LM2016}.
\end{remark}
\begin{proof}
  Since every function \(\vecb{g} \in H^1(T)\) with \(\int_E \vecb{g} \cdot \vecb{n} \ds = 0\) along all edges \(E \in \mathcal{E}(T)\) of \(T\) satisfies a discrete Friedrichs inequality
  \(\| \vecb{g} \|_{L^2(T)} \lesssim h_T \| \nabla \vecb{g} \|_{L^2(T)}\), see e.g.\ \cite{533fbff7}, it follows together with \eqref{eqn:reconstop_local_orthogonalities}
  \begin{align*}
     \| (1 - \Pi I) \vecb{v} \|_{L^2(T)} 
      & \leq \| (1 - I) \vecb{v}\|_{L^2(T)} + \| (1-\Pi)(I \vecb{v})\|_{L^2(T)}\\
      & \lesssim h_T \| \nabla \vecb{v}\|_{L^2(T)} + h_T \| \nabla I \vecb{v} \|_{L^2(T)}
      \lesssim h_T \| \nabla \vecb{v}\|_{L^2(T)}.
  \end{align*}
  Since \((1 - \Pi I) \vecb{v}\) is divergence-free, it holds \((1 - \Pi I) \vecb{v} = \mathrm{curl} \psi\) for some \(\psi \in H^1_0(\Omega) \cap H^2(\Omega)\), see Corollary 3.2 in \cite{GR86}.
  Condition \eqref{eq:P2P0_I2_property} implies that the standard interpolator into \(RT_0\) vanishes, i.e.\ \(\IR \mathrm{curl} \psi = 0\). Moreover, by the commuting properties 
  \eqref{eqn:commuting_props} of the de Rham complex, it also holds \(\mathrm{curl} (\IL \psi) = \IR \mathrm{curl} \psi = 0\).
  An integration by parts and standard interpolation estimates yield
  \begin{align*}
    \int_\Omega \vecb{\theta} \cdot (1 - \Pi I) \vecb{v} \dx &= \int_\Omega \vecb{\theta} \cdot  \mathrm{curl} (\psi - \IL \psi) \dx = \int_\Omega\mathrm{curl} \vecb{\theta} \cdot (\psi - \IL \psi) \dx \\
    & \leq \| h_\mathcal{T}^{2} \mathrm{curl} \vecb{\theta} \|_{L^2}  \| h_\mathcal{T}^{-2} (\psi - \IL \psi) \|_{L^2} \\
    & \leq \| h_\mathcal{T}^{2} \mathrm{curl} \vecb{\theta} \|_{L^2} \| h_\mathcal{T}^{-1} \nabla(\psi - \IL \psi) \|_{L^2}\\
     & = \| h_\mathcal{T}^{2} \mathrm{curl} \vecb{\theta} \|_{L^2} \| h_\mathcal{T}^{-1} \mathrm{curl}(\psi) \|_{L^2} \\
    & = \| h_\mathcal{T}^{2} \mathrm{curl} \vecb{\theta} \|_{L^2} \| h_\mathcal{T}^{-1} (1 - \Pi I) \vecb{v} \|_{L^2} \leq \| h_\mathcal{T}^{2} \mathrm{curl} \vecb{\theta} \|_{L^2} \| \nabla \vecb{v} \|_{L^2},
  \end{align*}
%  \begin{align*}
%     \| (1 - \Pi I) \vecb{v}\|_{H(\mathrm{curl},\Omega)^{-1}} 
%      & = \sup_{\vecb{\theta} \in H^1_0(\Omega)^3 / \nabla H^1(\Omega)} \frac{\int_\Omega \mathrm{curl} (\psi - I_{ND_0} \psi) \cdot \vecb{\theta} \dx}{\| h_\mathcal{T}^{2}\mathrm{curl} \vecb{\theta} \|_{L^2}}\\
     % & = \sup_{\vecb{\theta} \in H^1_0(\Omega)^3 / \nabla H^1(\Omega)} \frac{\int_\Omega h_\mathcal{T}^{-2}(\psi - I_{ND_0} \psi) \cdot h_\mathcal{T}^{2} \mathrm{curl} \vecb{\theta} \dx}{\| h_\mathcal{T}^{2}\mathrm{curl} \vecb{\theta} \|_{L^2}}\\
%      & \leq \sup_{\vecb{\theta} \in H^1_0(\Omega)^3 / \nabla H^1(\Omega)} \frac{ \| h_\mathcal{T}^{-2} (\psi - I_{ND_0} \psi) \|_{L^2} \| h_\mathcal{T}^{2} \mathrm{curl} \vecb{\theta} \|_{L^2}}{\| h_\mathcal{T}^{2}\mathrm{curl} \vecb{\theta} \|_{L^2}}\\
 %     & \lesssim \| h_\mathcal{T}^{-1} \nabla \psi\| \lesssim \| h_\mathcal{T}^{-1} \mathrm{curl} \psi\|
%      \lesssim \| h_\mathcal{T}^{-1} (1 - \Pi I) \vecb{v} \|_{L^2} \lesssim \| \nabla \vecb{v} \|_{L^2}.
         %    \end{align*}
  where we used that the curl is just the rotated gradient in two dimensions.
  This concludes the proof. 
%   Old 2D version:
%   \begin{align*}
%     \int_E \nabla \psi \times \vecb{n} \ds = 0 \quad \Rightarrow \quad \psi(z) = 0 \quad \text{for all } z \in \mathcal{N}.
%   \end{align*}
%   Hence,
%   \begin{align*}
%      \| (1 - \Pi I) \vecb{v}\|_{H(\mathrm{curl},\Omega)^{-1}} 
%       & = \sup_{\vecb{\theta} \in H^1_0(\Omega)^3 / \nabla H^1(\Omega)} \frac{\int_\Omega \mathrm{curl} \psi \cdot \vecb{\theta} \dx}{\| h_\mathcal{T}^{2}\mathrm{curl} \vecb{\theta} \|_{L^2}}\\
%       & = \sup_{\vecb{\theta} \in H^1_0(\Omega)^3 / \nabla H^1(\Omega)} \frac{\int_\Omega h_\mathcal{T}^{-2}(\psi - I_\mathcal{N} \psi) \cdot h_\mathcal{T}^{2} \mathrm{curl} \vecb{\theta} \dx}{\| h_\mathcal{T}^{2}\mathrm{curl} \vecb{\theta} \|_{L^2}}\\
%       & \leq \sup_{\vecb{\theta} \in H^1_0(\Omega)^3 / \nabla H^1(\Omega)} \frac{\| h_\mathcal{T}^{-2} (\psi - I_\mathcal{N} \psi) \|_{L^2} \| h_\mathcal{T}^{2} \mathrm{curl} \vecb{\theta} \|_{L^2}}{\| h_\mathcal{T}^{2}\mathrm{curl} \vecb{\theta} \|_{L^2}}\\
%       & \lesssim \| h_\mathcal{T}^{-2} (\psi - I_\mathcal{N} \psi) \| \lesssim \| \psi\|_{H^2} \approx \| \nabla \vecb{v} \|_{L^2}.
%   \end{align*}
\end{proof}

\begin{theorem}[Proof of Assumption~\ref{assumption1} for finite element methods with discontinuous \(P_1\) pressure space]\label{thm:proof_asssumption_P1dc}
  If the reconstruction operator \(\Pi\) and the Fortin operator \(I\) satisfy
  \begin{align}\label{eq:P2+P1_I2_property}
     \int_T (1 - I) \vecb{v} \dx = \int_T (1 - \Pi I) \vecb{v} \dx = 0 \quad \text{for all } T \in \mathcal{T},
  \end{align}
  also Assumption~\ref{assumption1} is satisfied.
\end{theorem}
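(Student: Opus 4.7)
The plan is to establish the two clauses of Assumption~\ref{assumption1} in turn, using the standing property \eqref{eqn:def_reconstoperator} of $\Pi$, the Fortin properties \eqref{eqn:Fortinprops1}--\eqref{eqn:Fortinprops2} of $I$, and the local splitting \eqref{eqn:reconstop_local_orthogonalities}. The first clause is immediate: for $\vecb{v}\in\vecb{V}_0$ the Fortin property yields $I\vecb{v}\in V_{0,h}$, and then \eqref{eqn:def_reconstoperator} gives $\mathrm{div}(\Pi I\vecb{v})=0$ with vanishing normal trace on $\partial\Omega$. An integration by parts against any $q\in H^1(\Omega)$ shows $\int_\Omega (1-\Pi I)\vecb{v}\cdot\nabla q\dx=0$.

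For the second clause, set $\vecb{w}:=(1-\Pi I)\vecb{v}$. Combining \eqref{eqn:Fortinprops1}, the $H^1$-stability \eqref{eqn:Fortinprops2}, and the local orthogonality \eqref{eqn:reconstop_local_orthogonalities} with $\mathcal{K}=\mathcal{T}$ provides the triangle-wise bound
\begin{align*}
  \|\vecb{w}\|_{L^2(T)} \le \|(1-I)\vecb{v}\|_{L^2(T)}+\|(1-\Pi)I\vecb{v}\|_{L^2(T)} \lesssim h_T\|\nabla\vecb{v}\|_{L^2(\omega_T)}.
\end{align*}
Given $\vecb{\theta}\in H(\mathrm{curl},\Omega)$, apply Lemma~\ref{lem:decomp_curlestimate} to write $\vecb{\theta}=\ClemN\vecb{\theta}+\nabla\phi+\vecb{y}$ with $\phi\in H^1(\Omega)$, $\vecb{y}\in H^1(\Omega)^2$, and $h_T^{-1}\|\vecb{y}\|_{L^2(T)}+\|\nabla\vecb{y}\|_{L^2(T)}\lesssim\|\mathrm{curl}\vecb{\theta}\|_{L^2(T)}$. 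The $\nabla\phi$-contribution vanishes by the first clause just proved, and the $\vecb{y}$-contribution is handled by Cauchy--Schwarz:
\begin{align*}
  \int_\Omega\vecb{w}\cdot\vecb{y}\dx \lesssim \sum_{T\in\mathcal{T}} h_T\|\nabla\vecb{v}\|_{L^2(\omega_T)}\cdot h_T\|\mathrm{curl}\vecb{\theta}\|_{L^2(T)} \lesssim \|\nabla\vecb{v}\|_{L^2}\|h_\mathcal{T}^2\mathrm{curl}\vecb{\theta}\|_{L^2}.
\end{align*}

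The decisive term is $\int_\Omega\vecb{w}\cdot\ClemN\vecb{\theta}\dx$, where the hypothesis $\int_T(1-\Pi I)\vecb{v}\dx=0$ enters. On each $T$, $\ClemN\vecb{\theta}|_T\in\mathcal{N}_0(T)$ has the shape $\vecb{a}_T+b_T(-y,x)^\top$, so subtracting its cell mean and applying Poincar\'e yields
\begin{align*}
  \int_T\vecb{w}\cdot\ClemN\vecb{\theta}\dx = \int_T\vecb{w}\cdot(\ClemN\vecb{\theta}-\overline{\ClemN\vecb{\theta}}_T)\dx \lesssim \|\vecb{w}\|_{L^2(T)}\,h_T\|\nabla\ClemN\vecb{\theta}\|_{L^2(T)}.
\end{align*}
A direct computation on $\mathcal{N}_0(T)$ delivers $\|\nabla\ClemN\vecb{\theta}\|_{L^2(T)}\lesssim\|\mathrm{curl}\ClemN\vecb{\theta}\|_{L^2(T)}$; and since $\mathrm{curl}(\vecb{\theta}-\ClemN\vecb{\theta})=\mathrm{curl}\vecb{y}$ (the $\nabla\phi$-part has zero curl), Lemma~\ref{lem:decomp_curlestimate} gives $\|\mathrm{curl}\ClemN\vecb{\theta}\|_{L^2(T)}\lesssim\|\mathrm{curl}\vecb{\theta}\|_{L^2(T)}$. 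Summing with Cauchy--Schwarz and using the bounded overlap of the patches $\{\omega_T\}$ concludes the proof.

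The main obstacle is the $\ClemN\vecb{\theta}$ contribution: the hypothesis only provides orthogonality against \emph{constants} on each cell, so one factor of $h_T$ is produced by Poincar\'e while the second factor must be squeezed out of $\ClemN\vecb{\theta}$ itself. The algebraic key is the $\mathcal{N}_0$-specific estimate $\|\nabla\ClemN\vecb{\theta}\|_{L^2(T)}\lesssim\|\mathrm{curl}\ClemN\vecb{\theta}\|_{L^2(T)}$, whose combination with the curl-splitting of Lemma~\ref{lem:decomp_curlestimate} transfers the scaling back to $\|h_\mathcal{T}^2\mathrm{curl}\vecb{\theta}\|_{L^2}$.
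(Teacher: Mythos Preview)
Your proof is correct. Both your argument and the paper's follow the same overall route: establish the local estimate $\|(1-\Pi I)\vecb{v}\|_{L^2(T)}\lesssim h_T\|\nabla\vecb{v}\|_{L^2(\omega_T)}$, apply Lemma~\ref{lem:decomp_curlestimate} to split off $\nabla\phi+\vecb{y}$, kill $\nabla\phi$ by divergence-freeness, and estimate the $\vecb{y}$-term by Cauchy--Schwarz. The divergence comes in the handling of $\int_\Omega\vecb{w}\cdot\ClemN\vecb{\theta}\dx$. The paper invokes the regular decomposition of Lemma~\ref{lem:reg_decomposition} on each $T$ to write $\ClemN\vecb{\theta}|_T=\nabla\alpha_T+\vecb{\beta}_T$, pushes this through the commuting diagram~\eqref{eqn:commuting_props} to obtain a discrete splitting $\nabla(\IL\alpha_T)+\IN\vecb{\beta}_T$, and then uses the hypothesis to drop the (piecewise constant) gradient part and Poincar\'e to control $\IN\vecb{\beta}_T$. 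Your argument is more elementary: you subtract the cell mean directly and exploit the algebraic identity for $\mathcal{N}_0(T)$-functions, namely that $\ClemN\vecb{\theta}|_T=\vecb{a}_T+b_T(-y,x)^\top$ has $\|\nabla\ClemN\vecb{\theta}\|_{L^2(T)}^2=2b_T^2|T|$ and $\|\mathrm{curl}\ClemN\vecb{\theta}\|_{L^2(T)}^2=4b_T^2|T|$, so the full gradient is already controlled by the curl without any Helmholtz splitting. This avoids Lemma~\ref{lem:reg_decomposition} and the commuting diagram entirely; the price is that your shortcut is specific to the lowest-order N\'ed\'elec space, whereas the paper's argument via regular decomposition is structured to be reused (as it is, with patches $\omega_y$ in place of cells $T$) in the subsequent proofs for the mini and Taylor--Hood elements.
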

\begin{remark}
  Condition \eqref{eq:P2+P1_I2_property} is satisfied by the \(P_2\)-bubble finite element method
  and its Fortin interpolator \cite[Section 8.6.2]{MR3097958}. 
  A suitable reconstruction operator \(\Pi\) is the standard interpolation into the space \(BDM_{2}\) or \(RT_1\) \cite{LMT15,LM2016}.
  Moreover, the result generalises to all \(P_k \times P_{k-2}\) finite element methods with \(k > 2\).
\end{remark}

\begin{proof}
  A triangle inequality, interpolation properties of \(\Pi\), a Poincar\'e inequality, and the \(H^1\)-stability of \(I\) show
  \begin{align*}
     \| (1 - \Pi I) \vecb{v} \|_{L^2(T)} \leq  \| (1 - \Pi) I\vecb{v} \|_{L^2(T)} + \| (1 - I) \vecb{v} \|_{L^2(T)} \lesssim h_T \| \nabla \vecb{v}\|_{L^2(T)}.
  \end{align*}
  To estimate the dual norm, Lemma~\ref{lem:decomp_curlestimate} yields \(\vecb{\theta} - \ClemN \vecb{\theta} = \nabla \phi + \vecb{y}\)
  with 
  \begin{align*}
    \| h_\mathcal{T} \vecb{y} \|_{L^2} \lesssim \|h^2_\mathcal{T} \mathrm{curl}\vecb{\theta} \|_{L^2}. 
  \end{align*}
Also note that due to $\ClemN \vecb{\theta} \in  H(\mathrm{curl},T)$  we can use the regular decomposition from Lemma \ref{lem:reg_decomposition} to find
\begin{align*}
    \ClemN \vecb{\theta}|_T = \nabla \alpha_T + \vecb{\beta}_T \quad \text{for all } T \in \mathcal{T}
\end{align*}
with some $\alpha_T \in H^2(T)$ and $\vecb{\beta}_T \in [H^1(T)]^2$ such that \(\int_{T} \vecb{\beta}_T \dx = 0\) and
$$\| \nabla \vecb{\beta}_T \|_{L^2(T)} \lesssim \| \mathrm{curl} (\ClemN \vecb{\theta}) \|_{L^2(T)} \lesssim \| \mathrm{curl} \vecb{\theta} \|_{L^2(T)}.$$
Together with the projection property of \(\IN\), the commuting properties \eqref{eqn:commuting_props} of the de Rham complex and
the continuity of the nodal interpolation $\IL$ for $H^2$ functions, the Helmholtz decomposition can be cast into the discrete version
\begin{align*}
    \ClemN \vecb{\theta}|_T =  \IN (\nabla \alpha_T + \vecb{\beta}_T) = \nabla (\IL \alpha_T) + \IN \vecb{\beta}_T.
\end{align*}
The combination of all decompositions defines some function \(\alpha_\mathcal{T} \in P_1(\mathcal{T})\) and \(\beta_\mathcal{T} \in P_1(\mathcal{T})^2\)
with
$$
  \| h_\mathcal{T}^2 \nabla_h \vecb{\beta}_\mathcal{T} \|_{L^2} \lesssim \| h_\mathcal{T}^2 \mathrm{curl} \vecb{\theta} \|_{L^2}.
$$
  Since \(\vecb{z} := (1 - \Pi I) \vecb{v}\) is orthogonal onto piecewise constants (by \eqref{eq:P2+P1_I2_property}), in particular
  the piecewise constant function \(\nabla (\IL \alpha)_\mathcal{T} \in P_0(\mathcal{T})^2\),  
  and gradients (because \(\vecb{z}\) is divergence-free and has zero boundary data), it follows
  \begin{align*}
    \int_\Omega \vecb{\theta} \cdot (1 - \Pi I) \vecb{v} \dx = \int_\Omega \vecb{z} \cdot \vecb{\theta} \dx   &= \int_\Omega \vecb{z} \cdot (\vecb{\theta} - \Pi_{ND_0} \vecb{\theta}) \dx
                                                               + \int_\Omega \vecb{z} \cdot \Pi_{ND_0} \vecb{\theta} \dx \\
                                                             &=\int_\Omega \vecb{z} \cdot \vecb{y} \dx + \int_\Omega \vecb{z} \cdot \vecb{\beta}_\mathcal{T} \dx \\
                                                             &= \int_\Omega h_\mathcal{T}^{-1} \vecb{z} \cdot h_\mathcal{T} \vecb{y} \dx + \int_\Omega h_\mathcal{T}^{-1} \vecb{z} \cdot h_\mathcal{T} \vecb{\beta}_\mathcal{T} \dx \\
                                                             & \lesssim  \|h_\mathcal{T}^{-1} \vecb{z} \|_{L^2} (\| h_\mathcal{T} \vecb{y} \|_{L^2} + \| h_\mathcal{T}^2 \nabla_h \vecb{\beta}_\mathcal{T} \|_{L^2} ) \\
    &\lesssim \|h_\mathcal{T}^{-1} \vecb{z} \|_{L^2}\| h_\mathcal{T}^{2} \mathrm{curl} \vecb{\theta} \|_{L^2} \lesssim \| \nabla \vecb{v} \|_{L^2}\| h_\mathcal{T}^{2} \mathrm{curl} \vecb{\theta} \|_{L^2}.
  \end{align*}
%  \begin{align*}
%     \| (1 - \Pi I) \vecb{v}\|_{H(\mathrm{curl},\Omega)^{-1}} 
%      & = \sup_{\vecb{\theta} \in H^1_0(\Omega)^3 / \nabla H^1(\Omega)} \frac{\int_\Omega \vecb{z} \cdot \vecb{\theta} \dx}{\| h_\mathcal{T}^{2} \mathrm{curl} \vecb{\theta} \|_{L^2}}\\
%      & = \sup_{\vecb{\theta} \in H^1_0(\Omega)^3 / \nabla H^1(\Omega)} \frac{\int_\Omega \vecb{z} \cdot (\vecb{\theta} - \Pi_{ND_0} \vecb{\theta}) \dx
%      + \int_\Omega \vecb{z} \cdot \Pi_{ND_0} \vecb{\theta} \dx}{\| h_\mathcal{T}^{2} \mathrm{curl} \vecb{\theta} \|_{L^2}}\\
%      & = \sup_{\vecb{\theta} \in H^1_0(\Omega)^3 / \nabla H^1(\Omega)} \frac{\int_\Omega \vecb{z} \cdot \vecb{y} \dx
%      + \int_\Omega \vecb{z} \cdot \vecb{\beta}_\mathcal{T} \dx}{\| h_\mathcal{T}^{2}  \mathrm{curl} \vecb{\theta} \|_{L^2}}\\
%      & = \sup_{\vecb{\theta} \in H^1_0(\Omega)^3 / \nabla H^1(\Omega)} \frac{\int_\Omega h_\mathcal{T}^{-1} \vecb{z} \cdot h_\mathcal{T} \vecb{y} \dx
%      + \int_\Omega h_\mathcal{T}^{-1} \vecb{z} \cdot h_\mathcal{T} \vecb{\beta}_\mathcal{T} \dx}{\| h_\mathcal{T}^{2} \mathrm{curl} \vecb{\theta} \|_{L^2}}\\
%      & \lesssim \sup_{\vecb{\theta} \in H^1_0(\Omega)^3 / \nabla H^1(\Omega)} \frac{\|h_\mathcal{T}^{-1} \vecb{z} \|_{L^2} (\| h_\mathcal{T} \vecb{y} \|_{L^2}
%      + \| h_\mathcal{T}^2 \nabla_h \vecb{\beta}_\mathcal{T} \|_{L^2} )}{\| h_\mathcal{T}^{2} \mathrm{curl} \vecb{\theta} \|_{L^2}}\\
%      & \lesssim \|h_\mathcal{T}^{-1} \vecb{z} \|_{L^2} \lesssim \| \nabla \vecb{v} \|_{L^2}.
%  \end{align*}
  Note, that we used an elementwise Poincar\'e inequality for \(\vecb{\beta}_\mathcal{T}\) (which has piecewise integral mean zero). This concludes the proof. 
\end{proof}

\begin{theorem}[Proof of Assumption~\ref{assumption1} for the mini finite element method]\label{thm:proof_asssumption_mini}
  The mini finite element method family with the reconstruction operator from \cite{2016arXiv160903701L} and a Fortin operator \(I\) with
  the property (see e.g.\ \cite[Section 8.4.2]{MR3097958})
  \begin{align}\label{eq:mini_I2_property}
     \int_T (1 - I) \vecb{v} \ds = 0 \quad \text{for all } ~ T \in \mathcal{T}
  \end{align}
  satisfies Assumption~\ref{assumption1}.
\end{theorem}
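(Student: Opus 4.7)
First I would verify that $\Pi I \vecb{v}\in L^2_\sigma(\Omega)$. Since the Fortin operator $I$ maps $\vecb{V}_0$ into $V_{0,h}$, the image $I\vecb{v}$ is discretely divergence-free, and the vertex-based reconstruction $\Pi$ of \cite{2016arXiv160903701L} produces functions in $H(\mathrm{div},\Omega)$ with vanishing normal trace on $\partial\Omega$ inherited from $V_h\subset H^1_0(\Omega)^2$. Together with \eqref{eqn:def_reconstoperator} this yields $\vecb{z}:=(1-\Pi I)\vecb{v}\in L^2_\sigma(\Omega)$ and hence the $L^2$-orthogonality $\int_\Omega\vecb{z}\cdot\nabla q\dx=0$ for every $q\in H^1(\Omega)$. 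Combining the cell condition \eqref{eq:mini_I2_property} with a cell Poincar\'e inequality for $(1-I)\vecb{v}$, the patch splitting \eqref{eqn:reconstop_local_orthogonalities} with $\mathcal{K}=\{\omega_z:z\in\mathcal{N}\}$ for $(1-\Pi)I\vecb{v}$, and the $H^1$-stability \eqref{eqn:Fortinprops2} of $I$, a triangle inequality then delivers the local bound $\|\vecb{z}\|_{L^2(T)}\lesssim h_T\|\nabla\vecb{v}\|_{L^2(\omega_T)}$.

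For the dual estimate I would apply Lemma~\ref{lem:decomp_curlestimate} to decompose $\vecb{\theta}-\ClemN\vecb{\theta} = \nabla\phi+\vecb{y}$ with the local bound $\|\vecb{y}\|_{L^2(T)}\lesssim h_T\|\mathrm{curl}\vecb{\theta}\|_{L^2(T)}$. The gradient piece is killed by the orthogonality above, and the $\vecb{y}$-piece is bounded by $\|h_\mathcal{T}^{-1}\vecb{z}\|_{L^2}\|h_\mathcal{T}\vecb{y}\|_{L^2}\lesssim\|\nabla\vecb{v}\|_{L^2}\|h_\mathcal{T}^2\mathrm{curl}\vecb{\theta}\|_{L^2}$. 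What remains is $\int_\Omega\vecb{z}\cdot\ClemN\vecb{\theta}\dx$, which I would attack by splitting $\vecb{z}=(1-I)\vecb{v}+(1-\Pi)I\vecb{v}$ and exploiting two different orthogonalities: the Fortin cell-mean vanishing \eqref{eq:mini_I2_property} for the first summand, and the patch orthogonality \eqref{eqn:reconstop_local_orthogonalities} of the correctors $\sigma_z$ for the second.

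On each cell $T$ the N\'ed\'elec function $\ClemN\vecb{\theta}|_T$ decomposes into its integral mean $\vec a_T\in\mathbb{R}^2$ and a rotational remainder of $L^2(T)$-size $\lesssim h_T\|\mathrm{curl}\ClemN\vecb{\theta}\|_{L^2(T)}\lesssim h_T\|\mathrm{curl}\vecb{\theta}\|_{L^2(\omega_T)}$. The cell orthogonality annihilates $\vec a_T$ against $(1-I)\vecb{v}$, and the rotational remainder is absorbed by Cauchy-Schwarz against $\|(1-I)\vecb{v}\|_{L^2(T)}\lesssim h_T\|\nabla\vecb{v}\|_{L^2(\omega_T)}$, summed over $\mathcal{T}$. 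For the $(1-\Pi)I\vecb{v}=\sum_{z\in\mathcal{N}}\sigma_z$ contribution I would apply Lemma~\ref{lem:reg_decomposition} on each patch $\omega_z$, writing $\ClemN\vecb{\theta}|_{\omega_z}=\nabla\alpha_z+\vecb{\beta}_z$ with $\int_{\omega_z}\vecb{\beta}_z\dx=0$ and $\|\nabla\vecb{\beta}_z\|_{L^2(\omega_z)}\lesssim\|\mathrm{curl}\vecb{\theta}\|_{L^2(\omega_z)}$; the patch orthogonality of $\sigma_z$ combined with a patch Poincar\'e inequality and the stability $\|\sigma_z\|_{L^2(\omega_z)}\lesssim h_z\|\nabla\vecb{v}\|_{L^2(\omega_z)}$ then controls $\int_{\omega_z}\sigma_z\cdot\vecb{\beta}_z\dx$ by $h_z^2\|\nabla\vecb{v}\|_{L^2(\omega_z)}\|\mathrm{curl}\vecb{\theta}\|_{L^2(\omega_z)}$, which sums correctly.

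The main obstacle is the remaining gradient piece $\int_{\omega_z}\sigma_z\cdot\nabla\alpha_z\dx$: unlike in Theorem~\ref{thm:proof_asssumption_P1dc}, where cell-wise constant orthogonality immediately kills the corresponding term, here $\sigma_z$ is not divergence-free in general, so the pairing does not vanish by construction. I would resolve this by exploiting the explicit patch-based structure of $\Pi$ from \cite{2016arXiv160903701L}: either $\sigma_z$ can be arranged to have vanishing normal trace on $\partial\omega_z$ together with a controlled weak divergence, so that integration by parts on $\omega_z$ produces terms bounded through $\|\sigma_z\|_{L^2(\omega_z)}$ and the $H^2$-regularity of $\alpha_z$ from Lemma~\ref{lem:reg_decomposition}, or the cell-constant parts $\vec a_T$ from the cell analysis and the patch gradients $\nabla\alpha_z$ can be reassembled into a single globally continuous piecewise linear $\alpha_h\in P_1(\mathcal{T})\cap H^1(\Omega)$ whose gradient is killed by the first-paragraph gradient orthogonality, leaving a high-order nonconforming remainder controlled via the discrete Helmholtz decomposition in $\mathcal{N}_0(\mathcal{T})$ and the commuting diagram \eqref{eqn:commuting_props}. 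Identifying the correct global gradient that is simultaneously compatible with the cell-wise and patch-wise orthogonalities is the anticipated hardest step of the proof.
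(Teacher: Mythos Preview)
Your overall architecture matches the paper's proof almost exactly: the local bound via the triangle inequality and patch splitting, Lemma~\ref{lem:decomp_curlestimate} to peel off the smooth remainder $\vecb{y}$, the split $\vecb{z}=(1-I)\vecb{v}+\sum_{y}\sigma_y$ against $\ClemN\vecb{\theta}$, and the use of Lemma~\ref{lem:reg_decomposition} on each patch $\omega_y$ for the $\sigma_y$ contribution. Your treatment of the $(1-I)\vecb{v}$ term (cell mean plus rotational remainder) is equivalent to the paper's reference back to the element-wise argument of Theorem~\ref{thm:proof_asssumption_P1dc}.

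The one place where you are speculating rather than arguing is precisely the step you flag as hardest, and the paper resolves it differently and more directly than either of your options. After obtaining the regular decomposition $\ClemN\vecb{\theta}|_{\omega_y}=\nabla\alpha_y+\vecb{\beta}_y$, the paper first upgrades it to a \emph{discrete} decomposition: since $\ClemN\vecb{\theta}$ is already in $\mathcal{N}_0$, the projection property gives $\ClemN\vecb{\theta}|_{\omega_y}=\IN(\nabla\alpha_y+\vecb{\beta}_y)=\nabla(\IL\alpha_y)+\IN\vecb{\beta}_y$ via the commuting diagram \eqref{eqn:commuting_props} (this needs $\alpha_y|_T\in H^2(T)$, which follows because $\nabla\alpha_y=\ClemN\vecb{\theta}-\vecb{\beta}_y$ is piecewise $H^1$). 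The gradient part is now $\nabla(\IL\alpha_y)$ with $\IL\alpha_y$ a \emph{continuous} $P_1$ function on the patch, and the paper then invokes a specific structural property of the reconstruction from \cite{2016arXiv160903701L} (their Proposition~16.ii): each corrector $\sigma_y$ is orthogonal on $\omega_y$ to gradients of continuous $P_1$ functions, so $\int_{\omega_y}\sigma_y\cdot\nabla(\IL\alpha_y)\dx=0$ outright. There is no integration by parts with a divergence term and no global reassembly; the orthogonality is a patch-wise built-in feature of $\Pi$ beyond the constant orthogonality recorded in \eqref{eqn:reconstop_local_orthogonalities}. The $\IN\vecb{\beta}_y$ piece is then handled by a scaling argument plus patch Poincar\'e, exactly as you propose for $\vecb{\beta}_y$ itself.
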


\begin{proof}
  For the mini finite element method, the reconstruction operator is given in \cite{2016arXiv160903701L}.
  It in particular satisfies \eqref{eqn:reconstop_local_orthogonalities} in the sense
  \begin{align}\label{def:MINIdecomp}
     (1-\Pi) I \vecb{v} = \sum_{y \in \mathcal{N}} \vecb{\sigma}_y
  \end{align}
  where \(\vecb{\sigma}_y \in BDM_{2}(\mathcal{T}(\omega_y))\) satisfies \(\| \vecb{\sigma}_y \|_{L^2(\omega_y)} \lesssim h_y \|\nabla I \vecb{v} \|_{L^2(\omega_y)}\)
  on the nodal patch \(\omega_y\) of the node \(y \in \mathcal{N}\) and
  (at least) the local orthogonality
  \begin{align*}
     \int_{\omega_y} \vecb{\sigma}_y \dx = 0.
  \end{align*}
  Furthermore we have $\vecb{\sigma}_y \cdot n = 0$ on the boundary $\partial \omega_y$. This time, the operators \(I\) and \(\Pi\) do not share the same orthogonality on cell-wise constants as in Theorem~\ref{thm:proof_asssumption_P1dc}, but one can split up the
  \(L^2\)-norm by a triangle inequality
  \begin{align*}
    \| (1 - \Pi I) \vecb{v} \|_{L^2} \leq \| (1- \Pi) I\vecb{v} \|_{L^2} + \| (1- I) \vecb{v} \|_{L^2}.
  \end{align*}
  Due to \eqref{eq:mini_I2_property} the norm \(\| (1- I) \vecb{v} \|_{L^2(T)}\)  can be estimated
  as in Theorem~\ref{thm:proof_asssumption_P1dc} and it remains to estimate \(\| (1- \Pi) I\vecb{v} \|_{L^2}\). % and \(\| (1- \Pi I) \vecb{v} \|_{H(\mathrm{curl},\Omega)^{-1}}\).
  For the first one, it holds
  \begin{align*}
    \| (1- \Pi) I \vecb{v} \|_{L^2(T)}^2 
    = \sum_{z \in \mathcal{N}(T)} \int_T \vecb{\sigma} _z \cdot (1-\Pi) I \vecb{v} \dx\\
    \leq \sum_{z \in \mathcal{N}(T)} \| \vecb{\sigma}_z \|_{L^2(\omega_z)} \| (1-\Pi) I \vecb{v} \|_{L^2(T)}\\
    \leq h_T \|\nabla I \vecb{v} \|_{L^2(\omega_T)} \| (1-\Pi) I \vecb{v}_h \|_{L^2(T)}
  \end{align*}
  and hence
  \begin{align*}
  \| (1- \Pi) I \vecb{v} \|_{L^2(T)} \lesssim h_T \|\nabla I \vecb{v} \|_{L^2(\omega_T)} \lesssim h_T \| \nabla \vecb{v} \|_{L^2(\omega_T)}.
\end{align*}
For the estimate of the dual norm, inserting the decomposition from Lemma~\ref{lem:decomp_curlestimate} leads to
\begin{align*}
\int_\Omega \vecb{\theta} \cdot (1 - \Pi I) \vecb{v} \dx = \int_\Omega \vecb{z} \cdot \vecb{\theta} \dx   = \int_\Omega \vecb{z} \cdot \vecb{y} \dx + \int_\Omega \vecb{z} \cdot \ClemN \vecb{\theta} \dx.
\end{align*}
% \begin{align*}
%     \| (1 - \Pi I) \vecb{v}\|_{H(\mathrm{curl},\Omega)^{-1}} 
%      & = \sup_{\vecb{\theta} \in H^1_0(\Omega)^3 / \nabla H^1(\Omega)} \frac{\int_\Omega \vecb{z} \cdot \vecb{\theta} \dx}{\| h_\mathcal{T}^{2} \mathrm{curl} \vecb{\theta} \|_{L^2}}\\
%      & = \sup_{\vecb{\theta} \in H^1_0(\Omega)^3 / \nabla H^1(\Omega)} \frac{\int_\Omega \vecb{z} \cdot \vecb{y} \dx + \int_\Omega \vecb{z} \cdot \Pi_{ND_0} \vecb{\theta} \dx}{\| h_\mathcal{T}^{2}  \mathrm{curl} \vecb{\theta} \|_{L^2}}.
%  \end{align*}
The first integral can be estimated as in Theorem~\ref{thm:proof_asssumption_P1dc} and it remains to estimate the second integral where we employ
the decomposition \eqref{def:MINIdecomp} for \((1 - \Pi) I \vecb{v} = \sum_{y \in \mathcal{N}} \vecb{\sigma}_y \) and its orthogonality properties, i.e.
\begin{align}\label{eqn:intemediate_mini}
  \int_\Omega \vecb{z} \cdot \ClemN \vecb{\theta}  \dx
  = \int_\Omega (1 - I) \vecb{v} \cdot \ClemN \vecb{\theta} \dx +  \sum_{y \in \mathcal{N}} \int_{\omega_y} h_y^{-1} \vecb{\sigma}_y \cdot h_y \ClemN \vecb{\theta} \dx
\end{align}
and we bound both integrals separately.
The first integral of \eqref{eqn:intemediate_mini} can be estimated exactly as in Theorem~\ref{thm:proof_asssumption_P1dc} due to \eqref{eq:mini_I2_property} by a element-wise Helmholtz decomposition such that
\begin{align*}
  \int_\Omega (1 - I) \vecb{v} \cdot \ClemN \vecb{\theta} \dx
  \lesssim \| \nabla \vecb{v} \|_{L^2} \| h_\mathcal{T}^2 \mathrm{curl} \vecb{\theta} \|_{L^2}.
\end{align*}
For the second integral, first note that due to $\ClemN \vecb{\theta} \in  H(\mathrm{curl},\omega_y)$ we can use the regular decomposition of Lemma \ref{lem:reg_decomposition} on each patch to get 
\begin{align*}
    \ClemN \vecb{\theta}|_{\omega_y} = \nabla \alpha_y + \vecb{\beta}_y \quad \text{for all } y \in \mathcal{N}.
\end{align*}
with some $\alpha_y \in H^1(\omega_y)$ and $\vecb{\beta}_y \in [H^1(\omega_y)]^2$ such that \(\int_{\omega_y} \vecb{\beta}_y \dx = 0\) and
$$\| \vecb{\beta}_y \|_{H^1(\omega_y)} \lesssim \| \mathrm{curl} (\Pi_{ND_0} \vecb{\theta}) \|_{L^2(\omega_y)} \lesssim \| \mathrm{curl} \vecb{\theta} \|_{L^2(\omega_y)}.$$
Next note, that on each element $T \subset \omega_y$ we have
$\Pi_{ND_0} \vecb{\theta}|_T \in [H^1(T)]^2$ and thus 
\begin{align*}
  \nabla \alpha_y|_T = \Pi_{ND_0} \vecb{\theta}|_T - \vecb{\beta_y}|_T \in [H^1(T)]^2 \quad \Rightarrow \quad  \alpha_y|_T \in H^2(T).
\end{align*}
Together with the projection property of \(\IN\), the commuting properties \eqref{eqn:commuting_props} of the de Rham complex and
the continuity of the nodal interpolation $I_\mathcal{N}$ for $H^2$ functions, the Helmholtz decomposition can be cast into the discrete version
\begin{align*}
    \Pi_{ND_0} \vecb{\theta}|_{\omega_y} = \IN(\nabla \alpha_y + \vecb{\beta}_y) = \nabla (\IL \alpha_y) + \IN \vecb{\beta}_y.
\end{align*}
Finally, a scaling argument and a Poincar\'e inequality shows
\begin{align*}
  \| \IN \vecb{\beta}_y \|_{L^2(\omega_y)} \lesssim \| \vecb{\beta}_y \|_{L^2(\omega_y)} + h_y \| \nabla \vecb{\beta}_y \|_{L^2(\omega_y)}
  \lesssim h_y \| \nabla \vecb{\beta}_y \|_{L^2(\omega_y)} \lesssim h_y \| \mathrm{curl} \vecb{\theta} \|_{L^2(\omega_y)}.
\end{align*}
Furthermore, note that the reconstruction operator is orthogonal on gradients of continuous \(P_1\)-functions like \(\nabla (\IL \alpha_y)\)
due to \cite[Proposition 16.ii]{2016arXiv160903701L}, i.e. \(\int_{\omega_y} \sigma_y \cdot \nabla (\IL \alpha_y) \dx = 0\).
Now, the second integral of \eqref{eqn:intemediate_mini} is bounded by
\begin{align*}
  \sum_{y \in \mathcal{N}} \int_{\omega_y} h_y^{-1} \sigma_y \cdot h_y \ClemN \vecb{\theta} \dx
  & \lesssim \sum_{y \in \mathcal{N}} \|  h_y^{-1} \sigma_y\|_{L^2(\omega_y)} \| h_y^2 \nabla \vecb{\beta}_y \|_{L^2(\omega_y)}\\
  & \lesssim \sum_{y \in \mathcal{N}} \| \nabla \vecb{v} \|_{L^2} \| h_y^2 \mathrm{curl} \vecb{\theta} \|_{L^2(\omega_y)} \lesssim \| \nabla \vecb{v} \|_{L^2} \| h_\mathcal{T}^2 \mathrm{curl} \vecb{\theta} \|_{L^2}.
\end{align*}
The combination of all previous results concludes the proof. 
\end{proof}

\begin{theorem}[Proof of Assumption~\ref{assumption1} for the Taylor--Hood finite element method]
  The Taylor--Hood finite element method family with the reconstruction operator from \cite{2016arXiv160903701L} and the Fortin operator \(I\) from \cite{Mardal2013,MR3272546}
  in two dimensions with the property
  \begin{align}\label{eq:TH_I2_property}
     \int_\Omega (1 - I) \vecb{v} \cdot \vecb{w} \ds = 0 \quad \text{for all } \vecb{w} \in \widetilde{\mathcal{N}_0}(\mathcal{T}),
  \end{align}
  where \(\widetilde{\mathcal{N}_0}(\mathcal{T})\) is a subset of \(\mathcal{N}_0(\mathcal{T})\) as defined in \cite{Mardal2013,MR3272546},
  satisfy Assumption~\ref{assumption1}.
\end{theorem}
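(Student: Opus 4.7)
The plan is to mirror the proof of Theorem~\ref{thm:proof_asssumption_mini} for the MINI element, adjusting for the fact that the Taylor--Hood Fortin interpolator is only orthogonal to the restricted subspace \(\widetilde{\mathcal{N}_0}(\mathcal{T})\) instead of the full space \(\mathcal{N}_0(\mathcal{T})\). Writing \(\vecb{z} := (1 - \Pi I)\vecb{v}\), the local \(L^2\) bound
\begin{equation*}
  \| \vecb{z} \|_{L^2(T)} \leq \| (1-I)\vecb{v} \|_{L^2(T)} + \| (1-\Pi) I \vecb{v} \|_{L^2(T)} \lesssim h_T \| \nabla \vecb{v} \|_{L^2(\omega_T)}
\end{equation*}
follows from the triangle inequality, the \(H^1\)-stability of the Fortin interpolator from \cite{Mardal2013,MR3272546}, and the patch-based decomposition of \(\Pi\) from \cite{2016arXiv160903701L}, exactly as in the MINI case.

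For the dual-norm estimate, the decisive step is to use a Cl\'ement-type N\'ed\'elec interpolation \(\widetilde{\ClemN}\) mapping into the restricted space \(\widetilde{\mathcal{N}_0}(\mathcal{T})\) that still admits a decomposition \(\vecb{\theta} - \widetilde{\ClemN}\vecb{\theta} = \nabla \phi + \vecb{y}\) with \(h_T^{-1}\| \vecb{y} \|_{L^2(T)} + \| \nabla \vecb{y} \|_{L^2(T)} \lesssim \| \mathrm{curl} \vecb{\theta} \|_{L^2(\omega_T)}\), in complete analogy with Lemma~\ref{lem:decomp_curlestimate}. Such an operator can be obtained by first applying \(\ClemN\) and then stably projecting the result onto \(\widetilde{\mathcal{N}_0}(\mathcal{T})\) by local averaging of the moments associated to edges outside \(\widetilde{\mathcal{N}_0}\); the correction is bounded on the reference patch by \(\| \mathrm{curl} \vecb{\theta}\|_{L^2(\omega_T)}\) and can be absorbed into the regular part \(\vecb{y}\).

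With \(\widetilde{\ClemN}\) in hand, write
\begin{equation*}
  \int_\Omega \vecb{z} \cdot \vecb{\theta} \dx = \int_\Omega \vecb{z} \cdot (\vecb{\theta} - \widetilde{\ClemN} \vecb{\theta}) \dx + \int_\Omega \vecb{z} \cdot \widetilde{\ClemN} \vecb{\theta} \dx.
\end{equation*}
In the first integral the gradient component \(\nabla \phi\) vanishes because \(\vecb{z}\) is divergence-free with zero normal trace, and the regular remainder \(\vecb{y}\) is estimated via Cauchy--Schwarz by \(\| h_\mathcal{T}^{-1} \vecb{z} \|_{L^2} \| h_\mathcal{T} \vecb{y} \|_{L^2} \lesssim \| \nabla \vecb{v} \|_{L^2} \| h_\mathcal{T}^2 \mathrm{curl} \vecb{\theta} \|_{L^2}\). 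The second integral is handled using the decomposition \(\vecb{z} = (1-I)\vecb{v} + \sum_{y \in \mathcal{N}} \vecb{\sigma}_y\) supplied by \cite{2016arXiv160903701L}: the contribution of \((1-I)\vecb{v}\) vanishes by \eqref{eq:TH_I2_property} since \(\widetilde{\ClemN}\vecb{\theta} \in \widetilde{\mathcal{N}_0}(\mathcal{T})\), while each patch-wise term \(\int_{\omega_y} \vecb{\sigma}_y \cdot \widetilde{\ClemN}\vecb{\theta} \dx\) is estimated exactly as in the MINI proof by applying Lemma~\ref{lem:reg_decomposition} on \(\omega_y\) and using orthogonality of \(\vecb{\sigma}_y\) against constants and gradients of continuous \(P_1\)-functions \cite[Proposition 16]{2016arXiv160903701L}.

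The main obstacle is constructing and analysing the modified Cl\'ement--N\'ed\'elec interpolator \(\widetilde{\ClemN}\) into the Stokes-compatible subspace \(\widetilde{\mathcal{N}_0}(\mathcal{T})\): one must verify that the restriction of the target space to \(\widetilde{\mathcal{N}_0}\) does not destroy the regular decomposition estimate, which requires a careful patch-wise commuting argument not spelled out in \cite{MR2373173}. Once this interpolator is in place, the remainder of the argument is a routine transcription of the MINI proof.
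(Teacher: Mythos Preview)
Your overall structure matches the paper's proof: the local $L^2$ bound, the use of Lemma~\ref{lem:decomp_curlestimate}, the splitting $\vecb{z} = (1-I)\vecb{v} + \sum_{y\in\mathcal{N}}\vecb{\sigma}_y$, and the patch-wise treatment of the $\vecb{\sigma}_y$ via Lemma~\ref{lem:reg_decomposition} are all identical. The only divergence is in how the term $\int_\Omega (1-I)\vecb{v}\cdot\ClemN\vecb{\theta}\dx$ is handled, and here the paper is more concrete than your outline.

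You propose to replace $\ClemN$ from the outset by a modified operator $\widetilde{\ClemN}$ landing in $\widetilde{\mathcal{N}_0}(\mathcal{T})$, so that this integral vanishes by \eqref{eq:TH_I2_property}. The paper instead keeps the standard $\ClemN$ into the full $\mathcal{N}_0(\mathcal{T})$ and only at this stage subtracts an explicit interpolant $I_{\widetilde{\mathcal{N}_0}}(\ClemN\vecb{\theta})\in\widetilde{\mathcal{N}_0}(\mathcal{T})$, built coefficient-wise from the modified basis functions $\widetilde{N}_E$ of \cite{MR3272546}. The key computation---which your phrase ``local averaging of the moments'' does not supply---is that on a boundary triangle the residual $(1-I_{\widetilde{\mathcal{N}_0}})\ClemN\vecb{\theta}$ collapses to a single basis function $N_{E_3}$ times the coefficient combination $\alpha_{E_3}+\alpha_{E_1}-\alpha_{E_2}$, and this equals $\int_T\mathrm{curl}(\ClemN\vecb{\theta})\dx$ by Stokes' theorem; on interior triangles the residual is zero. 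This is exactly what produces the bound $\|(1-I_{\widetilde{\mathcal{N}_0}})\ClemN\vecb{\theta}\|_{L^2(T)}\lesssim h_T\|\mathrm{curl}\vecb{\theta}\|_{L^2(T)}$ that you need for your correction to be ``absorbed into $\vecb{y}$''.

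So your route and the paper's are really the same argument in different packaging; you correctly isolate the obstacle but leave the mechanism (the Stokes identity for the boundary-edge coefficients) as a black box. Note also that the paper makes the construction work under an explicit mesh assumption---each interior edge has at most one boundary vertex---which your sketch omits.
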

\begin{remark}
  The proof requires some assumption on the mesh, i.e. we require
that each interior face \(E \in E^\circ\)
has at most one node on the boundary \(\partial \Omega\).
This assumption was also needed in \cite{Mardal2013} for the construction of a stable the Fortin interpolator and was later removed in \cite{MR3272546}. Maybe similar arguments can be used in our case.

\end{remark}
\begin{proof}
  A triangle inequality, properties of \(\Pi\), and the \(H^1\)-stability of \(I\) show
  \begin{align*}
     \| (1 - \Pi I) \vecb{v} \|_{L^2(T)} & \leq  \| (1 - \Pi) I\vecb{v} \|_{L^2(T)} + \| (1 - I) \vecb{v} \|_{L^2(T)} \lesssim h_T \| \nabla \vecb{v}\|_{L^2(T)}.
  \end{align*}
  Again using the decomposition from Lemma~\ref{lem:decomp_curlestimate} and the orthogonality
  between gradients and \((1 - \Pi I) \vecb{v}\) leads to
  %For the estimation of \( \| (1 - \Pi) I\vecb{v}\|_{H(\mathrm{curl},\Omega)^{-1}}\) 
  %
 \begin{align*}
   \int_\Omega (1 - \Pi I) \vecb{v} \cdot \vecb{\theta} \dx &=  \int_\Omega (1 - \Pi I) \vecb{v} \cdot \vecb{y} \dx + \int_\Omega (1 - \Pi I) \vecb{v} \cdot \ClemN \vecb{\theta} \dx% \\
   %&=\int_\Omega (1 - \Pi I) \vecb{v} \cdot \vecb{y} \dx + \int_\Omega (1 - I) \vecb{v} \cdot \ClemN \vecb{\theta} \dx + \sum_{y \in \mathcal{N}} \int_{\omega_y} h_y^{-1}\vecb{\sigma}_y   \cdot h_y \ClemN \vecb{\theta} \dx.
   %\| (1 - \Pi I) \vecb{v}\|_{H(\mathrm{curl},\Omega)^{-1}} 
   %   & = \sup_{\vecb{\theta} \in H^1_0(\Omega)^3 / \nabla H^1(\Omega)} \frac{\int_\Omega \vecb{z} \cdot \vecb{\theta} \dx}{\| h_\mathcal{T}^{2} \mathrm{curl} \vecb{\theta} \|_{L^2}}\\
   %   & = \sup_{\vecb{\theta} \in H^1_0(\Omega)^3 / \nabla H^1(\Omega)} \frac{\int_\Omega \vecb{z} \cdot \vecb{y} \dx + \int_\Omega \vecb{z} \cdot \ClemN \vecb{\theta} \dx}{\| h_\mathcal{T}^{2}  \mathrm{curl} \vecb{\theta} \|_{L^2}}.
  \end{align*}
  The first integral can be estimated similarly as in the proof of Theorem~\ref{thm:proof_asssumption_mini}. For the second integral we use \((1 - \Pi) I \vecb{v} = \sum_{y \in \mathcal{N}} \vecb{\sigma}_y \) to get
  \begin{align*}
\int_\Omega (1 - \Pi I) \vecb{v} \cdot \ClemN \vecb{\theta} \dx =  \int_\Omega (1 - I) \vecb{v} \cdot \ClemN \vecb{\theta} \dx + \sum_{y \in \mathcal{N}} \int_{\omega_y} h_y^{-1}\vecb{\sigma}_y   \cdot h_y \ClemN \vecb{\theta} \dx.
  \end{align*}
Similarly as in the proof of Theorem~\ref{thm:proof_asssumption_mini} we bound the first term. However the integral (using the orthogonality \eqref{eq:TH_I2_property})
  \begin{align*}
     \int_\Omega (1 - I) \vecb{v} \cdot \ClemN \vecb{\theta} \dx = \int_\Omega (1 - I) \vecb{v} \cdot (1-I_{\widetilde{\mathcal{N}_0}})\ClemN \vecb{\theta} \dx
  \end{align*}
needs a different treatment. To estimate this integral we have to design a proper interpolation \(I_{\widetilde{\mathcal{N}_0}}(\ClemN \vecb{\theta})\)
of \(\ClemN \vecb{\theta}\) into the space \(\widetilde{\mathcal{N}_0}(\mathcal{T})\). To do so, we can write \(\ClemN \vecb{\theta}\) as a linear combination
\begin{align*}
   \ClemN \vecb{\theta} = \sum_{E \in \mathcal{E}} \alpha_E N_E \quad \text{with coefficients } \alpha_E := \int_E \ClemN \vecb{\theta} \cdot \vecb{\tau}_E \ds
\end{align*}
and N\'ed\'elec basis functions $N_E$ with \(\int_F N_E \vecb{\tau}_F \ds = \delta_{EF}\) for \(E,F \in \mathcal{E}\).
Then, we choose \(I_{\widetilde{\mathcal{N}_0}}(\ClemN \vecb{\theta})\) as
\begin{align*}
   I_{\widetilde{\mathcal{N}_0}}(\ClemN \vecb{\theta}) := \sum_{E \in \mathcal{E}^0} \alpha_E \widetilde{N}_E
\end{align*}
where \(\mathcal{E}^0\) are the interior edges and \(\widetilde{N}_E\) are the modified basis functions as in \cite{MR3272546}, i.e.
\(\widetilde{N}_E = N_E\) for all edges \(E\) with two interior endpoints and \(\widetilde{N}_E = N_E \pm N_F\) for interior edges \(E\) with
one boundary endpoint and \(F\) is a boundary edge with the same boundary endpoint and in the same triangle of \(E\). The sign depends on the orientation of the tangent vectors. Assume a boundary triangle \(T_E\) with
nodes \(1,2,3\), boundary edge \(E_3 = \mathrm{conv} \lbrace 1, 2 \rbrace\) and two adjacent interior edges \(E_1\) and \(E_2\) as depicted in Figure~\ref{fig:numbering}.
We further assume, that the tangential vectors are pointing from the lower to the larger node number. Then, according to \cite{MR3272546}, the modified basis functions read
\(\widetilde{N}_{E_2} = N_{E_2} + N_{E_3}\) and \(\widetilde{N}_{E_1} = N_{E_1} - N_{E_3}\). Hence, locally on \(T\), we have
\begin{align*}
  \left((1-I_{\widetilde{\mathcal{N}_0}})\ClemN \vecb{\theta}\right)|_T
  & = \alpha_{E_1} N_{E_1} + \alpha_{E_2} N_{E_2} + \alpha_{E_3} N_{E_3} - (\alpha_{E_1} \widetilde{N}_{E_1} + \alpha_{E_2} \widetilde{N}_{E_2})\\
  & = N_{E_3} (\alpha_{E_3} + \alpha_{E_1} - \alpha_{E_2}).
\end{align*}

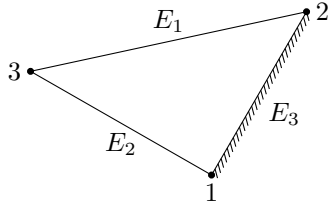
\begin{figure}[!tb]
\begin{center}
\begin{tikzpicture}[scale=0.25]
\path (0,0) coordinate (P1)
      (60:10) coordinate (P2)
      (150:11) coordinate (P3)
      (62:7.5) coordinate (CO)
      (intersection of P2--P3 and P1--CO) coordinate (C)
      (P1)--(P2) coordinate[midway] (Q3)
      (P2)--(P3) coordinate[midway] (Q1)
      (P3)--(P1) coordinate[midway] (Q2)
      (intersection of P1--Q1 and P2--Q2) coordinate (M)
      (M)--(P1) coordinate[midway] (M1)
      (M)--(P2) coordinate[midway] (M2)
      (M)--(P3) coordinate[midway] (M3)
      (Q1)--(P2) coordinate[midway] (M4)
      (M)--(P3) coordinate[midway] (T);
\draw (P1)--(P2)--(P3)--(P1);
\draw[decorate,decoration={border,segment length=0.8mm, amplitude=1.25mm, radius=50mm}, thin] (P2)--(P1);
\foreach \x in {P1,P2,P3}
{%
\filldraw[fill=white,very thick] (\x) circle (0.1);
}
\node at (P1)[below]{$1$};
\node at (P2)[right]{$2$};
\node at (P3)[left]{$3$};
\node at (Q1)[above]{$E_{1}$};
\node at (Q2)[below]{$E_{2}$};
\node at (Q3)[below right]{$E_{3}$};
\end{tikzpicture}
\vspace{0.25cm}
\caption{
Enumeration of the vertices and edges in a boundary triangle with boundary edge \(E_3\).}
\label{fig:numbering}
\end{center}
\end{figure}

The definition of \(\alpha_{E_j}\) and an easy calculation plus the Stokes theorem show
\begin{align*}
\alpha_{E_3} + \alpha_{E_1} - \alpha_{E_2} = \int_{\partial T} \ClemN \vecb{\theta} \cdot \vecb{\tau} \ds = \int_{T} \mathrm{curl} (\ClemN \vecb{\theta}) \dx
\end{align*}
and hence the estimate
\begin{align*}
  \| (1-I_{\widetilde{\mathcal{N}_0}})\ClemN \vecb{\theta} \|_{L^2(T)}
  \leq \left\lvert \int_{T} \mathrm{curl} (\ClemN \vecb{\theta}) \dx \right\rvert \| N_E \|_{L^2(T)}
  \lesssim h_T \| \mathrm{curl} (\ClemN \vecb{\theta}) \|_{L^2(T)}.
\end{align*}
On interior triangles, it holds \(\ClemN \vecb{\theta} - I_{\widetilde{\mathcal{N}_0}}(\ClemN \vecb{\theta}) = 0\) and hence
\begin{align*}
     \int_\Omega (1 - I) \vecb{v} \cdot & \ClemN \vecb{\theta} \dx
      = \int_\Omega (1 - I) \vecb{v} \cdot (\ClemN \vecb{\theta} - I_{\widetilde{\mathcal{N}_0}}(\ClemN \vecb{\theta})) \dx\\
     & \lesssim \sum_{T \in \mathcal{T}(\partial \Omega)} h_T^2 \| \mathrm{curl} (\ClemN \vecb{\theta}) \|_{L^2(T)} \| \nabla \vecb{v} \|_{L^2(\omega_T)}
      \lesssim \| h_{\mathcal{T}}^2 \mathrm{curl} \vecb{\theta} \|_{L^2} \| \nabla \vecb{v} \|_{L^2}.
  \end{align*}

This concludes the proof. 

\end{proof}

\section{Numerical experiments}\label{sec:numerics}
In the following two numerical examples, the novel error estimator
\begin{align*}
  \mu_\text{new}^2 := \nu^{-2} \eta_\text{new}(\nabla \vecb{u}_h)^2 + \| \mathrm{div} \vecb{u}_h \|^2_{L^2}
\end{align*}
from Theorem~\ref{thm:eta_averaging_pr} (for pressure-robust methods) or Proposition~\ref{thm:eta_averaging_cl} (for classical methods)
is compared to the classical error estimator
\begin{align*}
  \mu_\text{class}^2 :=  \nu^{-2} \eta_\text{class}(\nabla u_h,p_h)^2
  + \| \mathrm{div} \vecb{u}_h \|^2_{L^2}
\end{align*}
from Theorem~\ref{thm:errorbounds}, with respect to the $H^1$-seminorm $\textrm{err}_{H^1}(u_h) := || \nabla u - \nabla u_h ||_{L^2}$.
Our adaptive mesh refinement algorithm follows the loop
\begin{align*}
   \mathrm{SOLVE} \rightarrow \mathrm{ESTIMATE} \rightarrow \mathrm{MARK} \rightarrow \mathrm{REFINE} \rightarrow \mathrm{SOLVE} \rightarrow \ldots
\end{align*}
and employs the local contributions to the error estimator as
element-wise refinement indicators.
%\begin{align*}
%  \mu(T) & := 
%  \| h_\mathcal{T}^2 \mathrm{curl}_\mathcal{T} (\vecb{f} + \nu \mathrm{div}_h \sigma) \|_{L^2(T)} 
% + \| h_\mathcal{E}^{3/2} [(\vecb{f} + \nu \mathrm{div}_h \sigma)\cdot \vecb{\tau_E}] \|_{L^2(\bigcup \mathcal{E}(T))}\\
% & \qquad + \nu \| \sigma - \nabla \vecb{u}_h \|_{L^2(T)} 
% + \nu \|h_\mathcal{E}^{1/2}  [\sigma\vecb{n}] \|_{L^2(\bigcup \mathcal{E}(T))}\\
% & \qquad + \nu \| (1 - \pi_k) \Delta_\mathcal{T} \vecb{u}_h \|_{L^2(\omega_T)} \quad (\text{$\pi_k$ and $\omega_T$ depending on $\Pi$})
%\end{align*}
%(for a classical finite element method the first term is replaced by \(\| h_\mathcal{T} (\vecb{f} + \nu \mathrm{div}_h \sigma + \nabla p_h) \|_{L^2(T)}\)).
In the marking step, an element \(T \in \mathcal{T}\) is marked for refinement if \(\mu(T) \geq \frac{1}{4} \max\limits_{K \in \mathcal{T}} \mu(K)\).
The refinement step refines all marked elements plus further elements in a closure step to guarantee a regular triangulation. The implementation and numerical examples where performed with NGSolve/Netgen \cite{ngsolve}, \cite{netgen}.

\begin{remark}
For reducing the costs of the estimator, we estimated the consistency error $\eta_\text{cons,1}(\nabla u_h) = \| \nu \mathrm{div}_h (\nabla u_h) \circ (1 - \Pi) \|_{V_{0,h}^\star}$ according to
\eqref{eqn:reconstop_local_orthogonalities} by
\begin{align*}
  \eta_\text{cons,1}(\nabla u_h) \lesssim \nu \left( \sum_{K \in \mathcal{K}}
  h_K^2 \| (1 - \pi_{P_{q-1}(K)}) \Delta_h u_h \|^2_{L^2(K)} \right)^{1/2}.
\end{align*}
\end{remark}

\subsection{Smooth example on unit square} \label{curltwodimexample}
This example concerns the Stokes problem for
\begin{align*}
  \vecb{u}(x,y) := \mathrm{curl} \left(x^2(x-1)^2y^2(y-1)^2\right) \quad \text{and} \quad  p(x,y) := x^5 + y^5 - 1/3
\end{align*}
on the unit square \(\Omega := (0,1)^2\) with matching right-hand side \(\vecb{f} := - \nu \Delta \vecb{u} + \nabla p\)
for variable viscosity \(\nu\).

\begin{table}
  % \begin{tabular}{c|ccc|ccc}
  \begin{center}
    \caption{\label{tab:CurlExampleFixedmesh}The $H^1$ error and the old and new error estimators including the efficiency for the example of section \ref{curltwodimexample} for varying $\nu$ using the classical Taylor Hood element $\text{TH}_2$ and its pressure robust modification.}
  \footnotesize
  \begin{tabular}{c@{~~} @{~~}c@{~~} @{~~}c@{~~}  @{~~}c@{~~} @{~~}c@{~~} @{~~}c@{~~} @{~~}c@{~~} @{~~}c@{~~}}
   % \begin{tabular}{l@{~~~~} @{~}c@{~} @{~}c@{~}  @{~}c@{~~~~} @{~}c@{~} @{~}c@{~} @{~}c@{~} @{~}c@{~}}
    \toprule
    %& \multicolumn{3}{c}{ (classical)} &\multicolumn{4}{c}{(p-robust)}\\
     & \multicolumn{3}{c}{ (classical)} &\multicolumn{3}{c}{(p-robust)}\\
%    $\nu$ & $ \textrm{err}_{H^1}(u_h)$ & $\eta_\text{class}$ & $\frac{\eta_\text{class}}{\textrm{err}_{H^1}(u_h)} $
%                                        & $\textrm{err}_{H^1}(u_h)$ & $\eta_\text{new}$ & $\frac{\eta_\text{new}}{\textrm{err}_{H^1}(u_h)} $ & $\eta_\text{class}$  \\
        $\nu$ & $ \textrm{err}_{H^1}(u_h)$ & $\mu_\text{class}$ & $\frac{\mu_\text{class}}{\textrm{err}_{H^1}(u_h)} $
& $\textrm{err}_{H^1}(u_h)$ & $\mu_\text{new}$ & $\frac{\mu_\text{new}}{\textrm{err}_{H^1}(u_h)} $\\
    \midrule
    $10^{1}$& \numcoef{0.001265847525399444}& \numcoef{0.01994406784451481}& \numcoef{15.755505654775735}& \numcoef{0.001303824956633806}& \numcoef{0.05192592441325167}& \numcoef{39.82584023189215}\\
$10^{0}$& \numcoef{0.001297267918076333}& \numcoef{0.01416918684944461}& \numcoef{10.922328882113677}& \numcoef{0.0013038249566338068}& \numcoef{0.034652174672599206}& \numcoef{26.57732120887117}\\
$10^{-1}$& \numcoef{0.0031200912873200794}& \numcoef{0.11172402531285176}& \numcoef{35.80793477642578}& \numcoef{0.0013038249566338315}& \numcoef{0.032924996269452764}& \numcoef{25.252620071376253}\\
$10^{-2}$& \numcoef{0.028546945385180832}& \numcoef{1.1123820930363155}& \numcoef{38.96676432546687}& \numcoef{0.0013038249566340158}& \numcoef{0.03275228154554611}& \numcoef{25.12015234782754}\\
$10^{-3}$& \numcoef{0.28519134950247516}& \numcoef{11.121735407091114}& \numcoef{38.99745005061799}& \numcoef{0.0013038249566365867}& \numcoef{0.03273501010618053}& \numcoef{25.106905600752906}\\
$10^{-4}$& \numcoef{2.851885435299173}& \numcoef{111.21554661028321}& \numcoef{38.99719996943577}& \numcoef{0.0013038249566595386}& \numcoef{0.032733282963005116}& \numcoef{25.105580926192225}\\
$10^{-5}$& \numcoef{28.518851311410526}& \numcoef{1112.1536864539069}& \numcoef{38.997141726003846}& \numcoef{0.0013038249568637459}& \numcoef{0.03273311025353761}& \numcoef{25.105448458568148}\\
$10^{-6}$& \numcoef{285.1885125743553}& \numcoef{11121.535087671366}& \numcoef{38.997135569307765}& \numcoef{0.0013038249590302574}& \numcoef{0.03273309302043661}& \numcoef{25.105435199509003}\\
%$10^{-7}$& \numcoef{2851.885125453909}& \numcoef{111215.34910012405}& \numcoef{38.997134950315676}& \numcoef{0.0013038249858207545}& \numcoef{0.03273309178119433}& \numcoef{25.105433733185386}\\
    \bottomrule
  \end{tabular}  
  \end{center}
\end{table}

Table~\ref{tab:CurlExampleFixedmesh} lists the error of the classical and pressure-robust Taylor-Hood finite element methods with their error estimators \(\mu_\text{class}\) and \(\mu_\text{new}\)
on a fixed mesh with 1139 degrees of freedom but varying viscosities \(\nu \in (10^{-6},10)\).
As expected by the a priori error estimates of Theorems~\ref{thm:apriori_classical} and Theorem~\ref{thm:apriori_probust},
the error of the classical solution scales with \(\nu^{-1}\), while the error of the pressure-robust method is \(\nu\)-invariant.
Another observation is that both error estimators are efficient for their respective discrete solution.

Figure~\ref{curltwodexample} compares the errors and error estimators of the
 Taylor--Hood finite element method of order $2$ and the MINI finite element method
with and without the pressure robust modification for uniform mesh refinement
as in the case \(\nu = 1\) and a pressure-dominant case with \(\nu = 10^{-3}\).

In the pressure dominant case $\nu = 10^{-3}$ the right hand side $\vecb{f}$ tends to have a large irrotational part.
The left plot of Figure~\ref{curltwodexample} confirms once again that the velocity error scales with
$1 / \nu$ and that pressure-robust methods result in much more accurate solutions.
For the classical methods both estimators $\mu_\text{new}$ and $\mu_\text{class}$ are efficient,
i.e.\ have comparable overestimation factors and the same optimal convergence order as the velocity error.
In case of the MINI finite element method, all quantities even converge quadratically.
This is due to the dominance of the pressure error and the higher approximation order of the pressure.
In this sense, we are in a pre-asymptotic range and the error will convergence linearly as soon as the
\(\nu^{-3}\)-weighted pressure error is of same magnitude (as it is the case for $\nu=1$ from the very beginning).
Also for the classical MINI element $\mu_\text{new}$ and $\mu_\text{class}$ are efficient with a comparable overestimation factor.

For the pressure-robust methods we observe that for both elements the novel estimator $\mu_\text{new}$ is much smaller than $\mu_\text{class}$. To be more precise, it scales with $\mu_\text{new} \approx 1/ \nu ~ \mu_\text{class}$ in case of the Taylor-Hood method as expected by the theory.
    This is again due to the discrete pressure that is used in $\mu_\text{class}$ ($p_h$ replaced by some better approximation of $p$ would reduce the gap between $\mu_\text{new}$ and $\mu_\text{class}$).
    Hence, $\mu_\text{new}$ is efficient and $\mu_\text{class}$ is not efficient for the pressure-robust Taylor--Hood finite element method.
    In case of the pressure-robustly modified MINI method, the velocity error and the novel estimator $\mu_\text{new}$ now have the expected optimal linear order of the MINI finite element method.
    Otherwise, the conclusions are similar to the ones for the Taylor--Hood method.
    
In this case \(\nu = 1\) the irrotational part and the rotational part of the right hand side $\vecb{f}$ have the same magnitude, thus the pressure error has not
such a big impact on the accuracy of the discrete velocity.
Accordingly, there is only little to no improvement by the application of the pressure-robust modification.
Thus, in the right plots of Figure~\ref{curltwodexample} we can see that the velocity error of both methods, the pressure robust and the classical one, is of the same magnitude and order. Both estimators are efficient with slightly less overestimation by $\eta_\text{class}$.

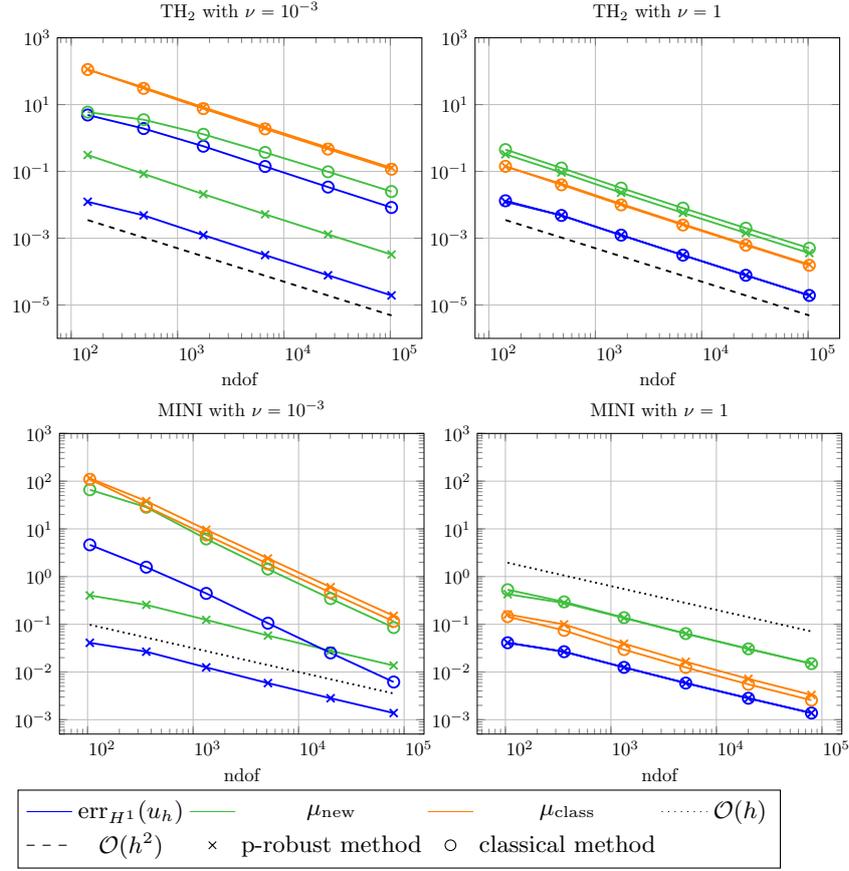
\begin{figure}
  \begin{center}
  \pgfplotstableread{plots/data/curlu_0.001_probust_TH2.out} \THtworecsmall
\pgfplotstableread{plots/data/curlu_1_probust_TH2.out} \THtworecbig
\pgfplotstableread{plots/data/curlu_0.001_TH2.out} \THtwosmall
\pgfplotstableread{plots/data/curlu_1_TH2.out} \THtwobig

\pgfplotstableread{plots/data/curlu_0.001_probust_MINI.out} \MINIrecsmall
\pgfplotstableread{plots/data/curlu_1_probust_MINI.out} \MINIrecbig
\pgfplotstableread{plots/data/curlu_0.001_MINI.out} \MINIsmall
\pgfplotstableread{plots/data/curlu_1_MINI.out} \MINIbig

\begin{tikzpicture}
  [
  %spy using outlines={rounded rectangle, width=1.5cm, height=0.15cm, very thick,black,magnification=3, connect spies}, 
  scale=0.7
  ]
  \begin{groupplot}[
    group style ={group size = 2 by 1},
    name=plot1,
    title={The TH2 element with $\nu = 10^{-3}$ (left) and $\nu = 1$ (right)},
    scale=1.0,
    xlabel=ndof,
    legend columns=1,
    legend style={font=\small},
    ylabel=$$,
    % ylabel=$U_B$,
    xmode=log,
    ymax=1e3,
    ymin=1e-6,
    ymode=log,
    %% ytick = {0.1,0.11,0.12},
    %y tick label style={/pgf/number format/.cd,fixed,precision=2},
    %x tick label style={/pgf/number format/.cd,fixed,precision=2},
    xticklabel style={text width=3em,align=right},
    yticklabel style={text width=3em,align=right},
    % 
    % log basis x=2,
    grid=major,
    %% major grid style={black!2},
    %legend style={
    %  cells={align=left},
      % draw=none,
    %  at={(0.1,0.1)},
    %  anchor = north west
    %},
    % xticklabel=\pgfmathparse{2^\tick}\pgfmathprintnumber{\pgfmathresult}
    ]
        
    \nextgroupplot[title = {$\text{TH}_2$ with $\nu = 10^{-3}$}, legend to name={mylegend}]    
%\addlegendentry{$\| \nabla(\boldsymbol{u}-\boldsymbol{u}_h)\|_{L^2}$}
%\addlegendimage{no markers, blue}    
%\addlegendentry{$\mu_\text{new}$}
%\addlegendimage{no markers, green!50!gray}
%\addlegendentry{$\mu_\text{class}$ }
%\addlegendimage{no markers, orange}
%\addlegendentry{$\mathcal{O}(h^2)$}
%\addlegendimage{dotted, black}
%\addlegendentry{$\text{TH}_2$ (p-robust)}
%\addlegendimage{only marks, mark = x}
%\addlegendentry{$\text{TH}_2$ (classical)}
%\addlegendimage{only marks, mark = o}

    \addplot[line width=1pt, dashed, color=black] table[x=0,y expr={\thisrowno{1}*\thisrowno{1}*0.5 } ]{\THtworecsmall};
    \addplot[line width=1pt, mark=x, mark size=3, color=blue] table[x=0,y=2]{\THtworecsmall};
    \addplot[line width=1pt, mark=x, mark size=3, color=green!50!gray] table[x=0,y=3]{\THtworecsmall};
    \addplot[line width=1pt, mark=x, mark size=3, color=orange] table[x=0,y=4]{\THtworecsmall};

    \addplot[line width=1pt, mark=o, mark size=3, color=blue] table[x=0,y=2]{\THtwosmall};
    \addplot[line width=1pt, mark=o, mark size=3, color=green!50!gray] table[x=0,y=3]{\THtwosmall};
    \addplot[line width=1pt, mark=o, mark size=3, color=orange] table[x=0,y=4]{\THtwosmall};

    \nextgroupplot[title = {$\text{TH}_2$ with $\nu = 1$}, legend to name = dummy]
    \addplot[line width=1pt, dashed, color=black] table[x=0,y expr={\thisrowno{1}*\thisrowno{1}*0.5} ]{\THtworecbig}; 
    \addplot[line width=1pt, mark=x, mark size=3, color=blue] table[x=0,y=2]{\THtworecbig}; 
    \addplot[line width=1pt, mark=x, mark size=3, color=green!50!gray] table[x=0,y=3]{\THtworecbig};
    \addplot[line width=1pt, mark=x, mark size=3, color=orange] table[x=0,y=4]{\THtworecbig};

    \addplot[line width=1pt, mark=o, mark size=3, color=blue] table[x=0,y=2]{\THtwobig};
    \addplot[line width=1pt, mark=o, mark size=3, color=green!50!gray] table[x=0,y=3]{\THtwobig};
    \addplot[line width=1pt, mark=o, mark size=3, color=orange] table[x=0,y=4]{\THtwobig};

\end{groupplot}

%\node[right=1em,inner sep=0pt] at (14.5,3) {\pgfplotslegendfromname{mylegend}};

\end{tikzpicture}

\begin{tikzpicture}
  [
  %spy using outlines={rounded rectangle, width=1.5cm, height=0.15cm, very thick,black,magnification=3, connect spies}, 
  scale=0.7
  ]
  \begin{groupplot}[
    group style ={group size = 2 by 1},
    name=plot1,
    title={The MINI element with $\nu = 10^{-3}$ (left) and $\nu = 1$ (right)},
    scale=1.0,
    xlabel=ndof,
    legend columns=4,
    legend style={font=\small},
    ylabel=$$,
    % ylabel=$U_B$,
    xmode=log,
    ymax=1e3,
    ymin=5e-4,
    ymode=log,
    %% ytick = {0.1,0.11,0.12},
    %y tick label style={/pgf/number format/.cd,fixed,precision=2},
    %x tick label style={/pgf/number format/.cd,fixed,precision=2},
    xticklabel style={text width=3em,align=right},
    yticklabel style={text width=4em,align=right},
    % 
    % log basis x=2,
     grid=major,
    %% major grid style={black!2},
    %legend style={
    %  cells={align=left},
      % draw=none,
    %  at={(0.1,0.1)},
    %  anchor = north west
    %},
    % xticklabel=\pgfmathparse{2^\tick}\pgfmathprintnumber{\pgfmathresult}
    ]
        
    \nextgroupplot[title = {MINI with $\nu = 10^{-3}$}, legend to name={mylegend}]    
    \addlegendentry{$\textrm{err}_{H^1}(u_h)$ }
    \addlegendimage{no markers, blue}
    %\addlegendentry{$(\nu^{-1} \mu(\sigma)^2 + \| \mathrm{div} \vecb{u}_h \|_0^2)^{1/2}$ \hspace{10pt}}
    \addlegendentry{$\mu_\text{new}$  }
    \addlegendimage{no markers, green!50!gray}
    \addlegendentry{$\mu_\text{class}$ }
    \addlegendimage{no markers, orange}
    \addlegendentry{$\mathcal{O}(h)$}
    \addlegendimage{dotted, black}
    \addlegendentry{$\mathcal{O}(h^2)$}
    \addlegendimage{dashed, black}
    \addlegendentry{p-robust method}
    \addlegendimage{only marks, mark = x}
    \addlegendentry{classical method}
    \addlegendimage{only marks, mark = o}

    \addplot[line width=1pt, dotted, color=black] table[x=0,y expr={\thisrowno{1} } ]{\MINIrecsmall};
    \addplot[line width=1pt, mark=x, mark size=3, color=blue] table[x=0,y=2]{\MINIrecsmall};
    \addplot[line width=1pt, mark=x, mark size=3, color=green!50!gray] table[x=0,y=3]{\MINIrecsmall};
    \addplot[line width=1pt, mark=x, mark size=3, color=orange] table[x=0,y=4]{\MINIrecsmall};

    \addplot[line width=1pt, mark=o, mark size=3, color=blue] table[x=0,y=2]{\MINIsmall};
    \addplot[line width=1pt, mark=o, mark size=3, color=green!50!gray] table[x=0,y=3]{\MINIsmall};
    \addplot[line width=1pt, mark=o, mark size=3, color=orange] table[x=0,y=4]{\MINIsmall};

    \nextgroupplot[title = {MINI with $\nu = 1$}, legend to name = dummy]
    \addplot[line width=1pt, dotted, color=black] table[x=0,y expr={\thisrowno{1}*20 } ]{\MINIrecbig}; 
    \addplot[line width=1pt, mark=x, mark size=3, color=blue] table[x=0,y=2]{\MINIrecbig}; 
    \addplot[line width=1pt, mark=x, mark size=3, color=green!50!gray] table[x=0,y=3]{\MINIrecbig};
    \addplot[line width=1pt, mark=x, mark size=3, color=orange] table[x=0,y=4]{\MINIrecbig};

    \addplot[line width=1pt, mark=o, mark size=3, color=blue] table[x=0,y=2]{\MINIbig};
    \addplot[line width=1pt, mark=o, mark size=3, color=green!50!gray] table[x=0,y=3]{\MINIbig};
    \addplot[line width=1pt, mark=o, mark size=3, color=orange] table[x=0,y=4]{\MINIbig};

\end{groupplot}

\node[right=1em,inner sep=0pt] at (-1.3,-1.8) {\pgfplotslegendfromname{mylegend}};

\end{tikzpicture}

%%% Local Variables:
%%% mode: latex
%%% TeX-master:"../main"
%%% End:
  \caption{The $H^1$-error, $\mu_\text{class}$ and $\mu_\text{new}$ for the example of section \ref{curltwodimexample} with $\nu = 1$ and $\nu = 10^{-3}$. At the top the $\text{TH}_2$ element, and at the bottom the MINI element.} \label{curltwodexample}
    \end{center}
\end{figure}

%\subsection{Curl of polynomial of order \(12\) example} \label{curlthreedimexample}
%This example concerns the Stokes problem for
%\begin{align*}
%  \vecb{u}(x,y,z) &:= \mathrm{curl} \left( \zeta, \zeta,\zeta \right) \quad \text{with} \quad \zeta:=x^2(x-1)^2y^2(y-1)^2z^2(z-1)^2  \\  p(x,y,z) &:= x^5 + y^5 + z^5 - 1/2
%\end{align*}
%on the unit cube \(\omega := (0,1)^3\) with matching right-hand side \(\vecb{f} := - \nu \Delta \vecb{u} + \nabla p\).
%Figure~\ref{fig::curlthreedexample} compares the novel error estimator $\eta_\text{new}$ (see section \ref{curltwodimexample}
%for the definition) with the classical error estimator $\eta_\text{class}$, using the Taylor Hood element of order $2$ and $\nu = 10^{-3}$.
%Similar to the two dimensional example this leads to a {\it pressure dominant case}.
%In the plot we see that the pressure robust method is much more accurate compared to the classical method.
%With respect to the estimators we can draw the same deductions as for the two dimensional case.
%For the standard method both estimators are quite close and have a moderate efficiency.
%Using a pressure robust method the old estimators shows huge overestimation,
%whereas the novel estimator is quite close to the velocity error.
%
%\begin{figure}
%  \input{plots/curl_ex_3d}
% \caption{The $H^1$-error, $\eta_\text{class}$ and $\eta_\text{new}$ for the example of section \ref{curlthreedimexample} with $\nu = 10^{-3}$ using the $\text{TH}_2$ element.} \label{fig::curlthreedexample}
%\end{figure}

\begin{figure}
  \begin{center}
  \pgfplotstableread{plots/data/lshape_0.001_adaptive_probust_P20.out} \lshapeadaprecptz
\pgfplotstableread{plots/data/lshape_0.001_adaptive_P20.out} \lshapeadapptz
\pgfplotstableread{plots/data/lshape_0.001_probust_P20.out} \lshaperecptz
\pgfplotstableread{plots/data/lshape_0.001_P20.out} \lshapeptz

\pgfplotstableread{plots/data/lshape_0.001_adaptive_probust_P2B.out} \lshapeadaprecptb
\pgfplotstableread{plots/data/lshape_0.001_adaptive_P2B.out} \lshapeadapptb
\pgfplotstableread{plots/data/lshape_0.001_probust_P2B.out} \lshaperecptb
\pgfplotstableread{plots/data/lshape_0.001_P2B.out} \lshapeptb
                         
\begin{tikzpicture}
  [
  %spy using outlines={rounded rectangle, width=1.5cm, height=0.15cm, very thick,black,magnification=3, connect spies}, 
  scale=0.7
  ]
  \begin{groupplot}[
    group style ={group size = 2 by 1},
    name=plot2,
    title={pressure robust method},
    scale=1.0,
    xlabel=ndof,
    legend columns=1,
    ylabel=$$,
    % ylabel=$U_B$,
    xmin = 1e2,
    xmax=2e5,
    xmode=log,
    ymax=1e3,
    ymin=2e-2,
    ymode=log,
    %% ytick = {0.1,0.11,0.12},
    %y tick label style={/pgf/number format/.cd,fixed,precision=2},
    %x tick label style={/pgf/number format/.cd,fixed,precision=2},
    yticklabel style={text width=3em,align=right},
    xticklabel style={text width=3em,align=right},
    % 
    % log basis x=2,
     grid=major,
    %% major grid style={black!2},
    %legend style={
    %  cells={align=left},
      % draw=none,
    %  at={(0.1,0.1)},
    %  anchor = north west
    %},
    % xticklabel=\pgfmathparse{2^\tick}\pgfmathprintnumber{\pgfmathresult}
    ]
        
    \nextgroupplot[title = P2P0 (p-robust), legend to name={mylegend}]
    \addlegendentry{$\mathcal{O}(h)$}
    \addlegendimage{dotted, black}
    \addlegendentry{$\|\nabla(\boldsymbol{u}-\boldsymbol{u}_h) \|_{L^2}$ \hspace{10pt}}
    \addlegendimage{no markers, blue}
    \addlegendentry{$\mu_\text{new}$ \hspace{10pt}}
    \addlegendimage{no markers, green!50!gray}
    \addlegendentry{adaptive ref. \hspace{10pt}}
    \addlegendimage{only marks, mark = x}
    \addlegendentry{uniform ref.}
    \addlegendimage{only marks, mark = o}

    \addplot[line width=1pt, dotted, color=black] table[x=0,y expr={\thisrowno{1}*10} ]{\lshapeadaprecptz}; 
    \addplot[line width=1pt, mark=x, color=blue] table[x=0,y=2]{\lshapeadaprecptz}; 
    \addplot[line width=1pt, mark=x, color=green!50!gray] table[x=0,y=3]{\lshapeadaprecptz};

    \addplot[line width=1pt, mark=o, color=blue] table[x=0,y=2]{\lshaperecptz};
    \addplot[line width=1pt, mark=o, color=green!50!gray] table[x=0,y=3]{\lshaperecptz};

    \nextgroupplot[title = P2P0 (classical), legend to name = dummy]
    \addplot[line width=1pt, dotted, color=black] table[x=0,y expr={\thisrowno{1}*2000} ]{\lshapeadaprecptz};
    \addplot[line width=1pt, mark=x, color=blue] table[x=0,y=2]{\lshapeadapptz};
    \addplot[line width=1pt, mark=x, color=green!50!gray] table[x=0,y=3]{\lshapeadapptz};

    \addplot[line width=1pt, mark=o, color=blue] table[x=0,y=2]{\lshapeptz};
    \addplot[line width=1pt, mark=o, color=green!50!gray] table[x=0,y=3]{\lshapeptz};
\end{groupplot}

%\node[right=1em,inner sep=0pt] at (14.5,3) {\pgfplotslegendfromname{mylegend}};

\end{tikzpicture}
\begin{tikzpicture}
  [
  %spy using outlines={rounded rectangle, width=1.5cm, height=0.15cm, very thick,black,magnification=3, connect spies}, 
  scale=0.7
  ]
  \begin{groupplot}[
    group style ={group size = 2 by 1},
    name=plot2,
    title={pressure robust method},
    scale=1.0,
    xlabel=ndof,
    legend columns=3,
    ylabel=$$,
    % ylabel=$U_B$,
    xmode=log,
    xmin = 1e2,
    xmax=2e5,
    %xmin = 3e3,
    %xmax=6e4,
    ymax=5e2,
    ymin=1e-3,
    ymode=log,
    %% ytick = {0.1,0.11,0.12},
    %y tick label style={/pgf/number format/.cd,fixed,precision=2},
    %x tick label style={/pgf/number format/.cd,fixed,precision=2},
    yticklabel style={text width=3em,align=right},
    xticklabel style={text width=3em,align=right},
    % 
    % log basis x=2,
     grid=major,
    %% major grid style={black!2},
    %legend style={
    %  cells={align=left},
      % draw=none,
    %  at={(0.1,0.1)},
    %  anchor = north west
    %},
    % xticklabel=\pgfmathparse{2^\tick}\pgfmathprintnumber{\pgfmathresult}
    ]
        
    \nextgroupplot[title = P2B (p-robust), legend to name={mylegend}]
    \addlegendentry{$\textrm{err}_{H^1}(u_h)$ \hspace{10pt}}
    \addlegendimage{no markers, blue}    
    \addlegendentry{$\mathcal{O}(h)$}
    \addlegendimage{dotted, black}
    \addlegendentry{$\mathcal{O}(h^2)$}
    \addlegendimage{dashed, black}
    \addlegendentry{$\mu_\text{new}$ \hspace{10pt}}
    \addlegendimage{no markers, green!50!gray}    
    \addlegendentry{adaptive ref. \hspace{10pt}}
    \addlegendimage{only marks, mark = x}
    \addlegendentry{uniform ref.}
    \addlegendimage{only marks, mark = o}

    \addplot[line width=1pt, dashed, color=black] table[x=0,y expr={\thisrowno{1}^2*100} ]{\lshapeadaprecptb}; 
    \addplot[line width=1pt, mark=x, color=blue] table[x=0,y=2]{\lshapeadaprecptb}; 
    \addplot[line width=1pt, mark=x, color=green!50!gray] table[x=0,y=3]{\lshapeadaprecptb};

    \addplot[line width=1pt, mark=o, color=blue] table[x=0,y=2]{\lshaperecptb};
    \addplot[line width=1pt, mark=o, color=green!50!gray] table[x=0,y=3]{\lshaperecptb};

    \nextgroupplot[title = P2B (classical), legend to name = dummy]
    \addplot[line width=1pt, dashed, color=black] table[x=0,y expr={\thisrowno{1}^2*1000} ]{\lshapeadaprecptb};
    \addplot[line width=1pt, mark=x, color=blue] table[x=0,y=2]{\lshapeadapptb};
    \addplot[line width=1pt, mark=x, color=green!50!gray] table[x=0,y=3]{\lshapeadapptb};

    \addplot[line width=1pt, mark=o, color=blue] table[x=0,y=2]{\lshapeptb};
    \addplot[line width=1pt, mark=o, color=green!50!gray] table[x=0,y=3]{\lshapeptb};
\end{groupplot}

\node[right=1em,inner sep=0pt] at (0.5,-1.8) {\pgfplotslegendfromname{mylegend}};

\end{tikzpicture}

%%% Local Variables:
%%% mode: latex
%%% TeX-master:"../main"
%%% End:
  \vspace{5pt}
  \caption{Error for L-shape example of section \ref{lshapeexample} using the discontinuous pressure elements P2P0 (top) and the P2B (bottom) with $\nu=10^{-3}$} \label{fig:lshape:dcontpres}
  \end{center}
\end{figure}
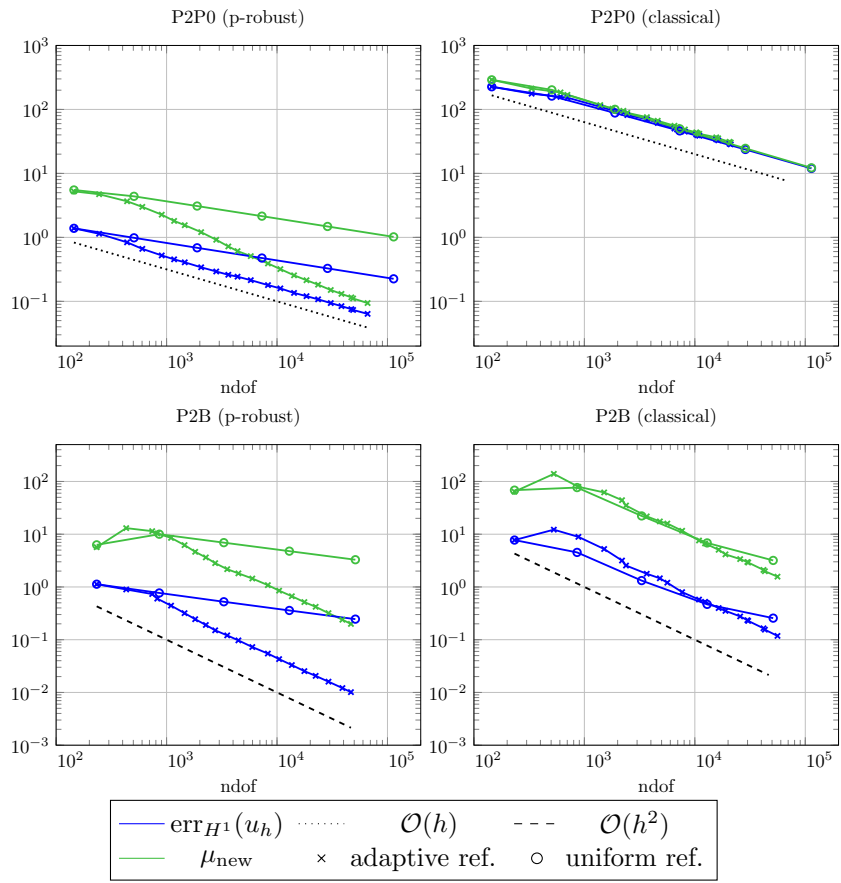

\begin{figure}
  \begin{center}
  \pgfplotstableread{plots/data/lshape_0.001_adaptive_probust_TH3.out} \lshapeadaprectht
\pgfplotstableread{plots/data/lshape_0.001_adaptive_TH3.out} \lshapeadaptht
\pgfplotstableread{plots/data/lshape_0.001_probust_TH3.out} \lshaperectht
\pgfplotstableread{plots/data/lshape_0.001_TH3.out} \lshapetht

\pgfplotstableread{plots/data/lshape_0.001_adaptive_probust_MINI.out} \lshapeadaprecmini
\pgfplotstableread{plots/data/lshape_0.001_adaptive_MINI.out} \lshapeadapmini
\pgfplotstableread{plots/data/lshape_0.001_probust_MINI.out} \lshaperecmini
\pgfplotstableread{plots/data/lshape_0.001_MINI.out} \lshapemini

\begin{tikzpicture}
  [
  %spy using outlines={rounded rectangle, width=1.5cm, height=0.15cm, very thick,black,magnification=3, connect spies}, 
  scale=0.7
  ]
  \begin{groupplot}[
    group style ={group size = 2 by 1},
    name=plot2,
    title={pressure robust method},
    scale=1.0,
    xlabel=ndof,
    legend columns=1,
    ylabel=$$,
    % ylabel=$U_B$,
    %scale only axis,
    xmode=log,
    xmin = 7e1,
    %xmax=6e4,
    xmax=1e5,
    ymax=1e3,
    ymin=2e-2,
    ymode=log,
    %% ytick = {0.1,0.11,0.12},
    %y tick label style={/pgf/number format/.cd,fixed,precision=2},
    yticklabel style={text width=3em,align=right},
    xticklabel style={text width=3em,align=right},
    %x tick label style={/pgf/number format/.cd,fixed,precision=2},
    %scale only axis,
    % 
    % log basis x=2,
     grid=major,
    %% major grid style={black!2},
    %legend style={
    %  cells={align=left},
      % draw=none,
    %  at={(0.1,0.1)},
    %  anchor = north west
    %},
    % xticklabel=\pgfmathparse{2^\tick}\pgfmathprintnumber{\pgfmathresult}
    ]
        
    \nextgroupplot[title = MINI (p-robust), legend to name={mylegend}]
    \addlegendentry{$\mathcal{O}(h)$}
    \addlegendimage{dotted, black}
    \addlegendentry{$\|\nabla(\boldsymbol{u}-\boldsymbol{u}_h) \|_{L^2}$ \hspace{10pt}}
    \addlegendimage{no markers, blue}
    \addlegendentry{$\mu_\text{new}$ \hspace{10pt}}
    \addlegendimage{no markers, green!50!gray}
    \addlegendentry{adaptive ref. \hspace{10pt}}
    \addlegendimage{only marks, mark = x}
    \addlegendentry{uniform ref.}
    \addlegendimage{only marks, mark = o}

   \addplot[line width=1pt, dotted, color=black] table[x=0,y expr={\thisrowno{1}*10} ]{\lshapeadaprecmini}; 
    \addplot[line width=1pt, mark=x, color=blue] table[x=0,y=2]{\lshapeadaprecmini}; 
    \addplot[line width=1pt, mark=x, color=green!50!gray] table[x=0,y=3]{\lshapeadaprecmini};

    \addplot[line width=1pt, mark=o, color=blue] table[x=0,y=2]{\lshaperecmini};
    \addplot[line width=1pt, mark=o, color=green!50!gray] table[x=0,y=3]{\lshaperecmini};

    \nextgroupplot[title = MINI (classical), legend to name = dummy]
    \addplot[line width=1pt, dotted, color=black] table[x=0,y expr={\thisrowno{1}*10} ]{\lshapeadaprecmini};
    \addplot[line width=1pt, mark=x, color=blue] table[x=0,y=2]{\lshapeadapmini};
    \addplot[line width=1pt, mark=x, color=green!50!gray] table[x=0,y=3]{\lshapeadapmini};

    \addplot[line width=1pt, mark=o, color=blue] table[x=0,y=2]{\lshapemini};
    \addplot[line width=1pt, mark=o, color=green!50!gray] table[x=0,y=3]{\lshapemini};
\end{groupplot}

%\node[right=1em,inner sep=0pt] at (14.5,3) {\pgfplotslegendfromname{mylegend}};

\end{tikzpicture}

\begin{tikzpicture}
  [
  %spy using outlines={rounded rectangle, width=1.5cm, height=0.15cm, very thick,black,magnification=3, connect spies}, 
  scale=0.7
  ]
  \begin{groupplot}[
    group style ={group size = 2 by 1},
    name=plot2,
    title={pressure robust method},
    scale=1.0,
    xlabel=ndof,
    legend columns=3,
    ylabel=$$,
    % ylabel=$U_B$,
    %scale only axis,
    xmode=log,
    % xmin = 3e3,
    xmin = 7e1,
    xmax=1e5,
    ymax=5e2,
    ymin=2e-5,
    ymode=log,
    %% ytick = {0.1,0.11,0.12},
    %y tick label style={/pgf/number format/.cd,fixed,precision=2},
    yticklabel style={text width=3em,align=right},
    xticklabel style={text width=3em,align=right},
    %x tick label style={/pgf/number format/.cd,fixed,precision=2},    
    % 
    % log basis x=2,
     grid=major,
    %% major grid style={black!2},
    %legend style={
    %  cells={align=left},
      % draw=none,
    %  at={(0.1,0.1)},
    %  anchor = north west
    %},
    % xticklabel=\pgfmathparse{2^\tick}\pgfmathprintnumber{\pgfmathresult}
    ]
        
    \nextgroupplot[title = $\text{TH}_3$ (p-robust), legend to name={mylegend}]
    
    \addlegendentry{$\textrm{err}_{H^1}(u_h)$ \hspace{10pt}}
    \addlegendimage{no markers, blue}    
    \addlegendentry{$\mathcal{O}(h)$}
    \addlegendimage{dotted, black}
    \addlegendentry{$\mathcal{O}(h^3)$}
    \addlegendimage{dashed, black}
    \addlegendentry{$\mu_\text{new}$ \hspace{10pt}}
    \addlegendimage{no markers, green!50!gray}        
    \addlegendentry{adaptive ref. \hspace{10pt}}
    \addlegendimage{only marks, mark = x}
    \addlegendentry{uniform ref.}
    \addlegendimage{only marks, mark = o}

   \addplot[line width=1pt, dashed, color=black] table[x=0,y expr={\thisrowno{1}^3*1000} ]{\lshapeadaprectht}; 
    \addplot[line width=1pt, mark=x, color=blue] table[x=0,y=2]{\lshapeadaprectht}; 
    \addplot[line width=1pt, mark=x, color=green!50!gray] table[x=0,y=3]{\lshapeadaprectht};

    \addplot[line width=1pt, mark=o, color=blue] table[x=0,y=2]{\lshaperectht};
    \addplot[line width=1pt, mark=o, color=green!50!gray] table[x=0,y=3]{\lshaperectht};

    \nextgroupplot[title = $\text{TH}_3$ (classical), legend to name = dummy]
    \addplot[line width=1pt, dashed, color=black] table[x=0,y expr={\thisrowno{1}^3*10000} ]{\lshapeadaprectht};
    \addplot[line width=1pt, mark=x, color=blue] table[x=0,y=2]{\lshapeadaptht};
    \addplot[line width=1pt, mark=x, color=green!50!gray] table[x=0,y=3]{\lshapeadaptht};

    \addplot[line width=1pt, mark=o, color=blue] table[x=0,y=2]{\lshapetht};
    \addplot[line width=1pt, mark=o, color=green!50!gray] table[x=0,y=3]{\lshapetht};
\end{groupplot}

\node[right=1em,inner sep=0pt] at (0.5,-1.8) {\pgfplotslegendfromname{mylegend}};

\end{tikzpicture}

%%% Local Variables:
%%% mode: latex
%%% TeX-master:"../main"
%%% End:
  \vspace{5pt}
  \caption{Error for L-shape example of section \ref{lshapeexample} using the continuous pressure elements MINI (top) and the $\text{TH}_3$ (bottom) with $\nu=10^{-3}$} \label{fig:lshape:contpres}
  \end{center}
\end{figure}

\subsection{L-shape example} \label{lshapeexample}
This example studies a velocity \(\vecb{u}\) and a pressure \(p_0\) on the L-shaped domain 
\(\Omega := (-1,1)^2 \setminus \left((0,1) \times (-1,0)\right)\) taken from \cite{MR993474}
that satisfy \(-\nu \Delta \vecb{u} + \nabla p_0 = 0\). The fields are defined in polar coordinates and read
\begin{align*}
 \vecb{u}(r,\varphi)
& :=r^\alpha
\begin{pmatrix}
(\alpha+1)\sin(\varphi)\psi(\varphi) + \cos(\varphi)\psi^\prime(\varphi)
\\
-(\alpha+1)\cos(\varphi)\psi(\varphi) + \sin(\varphi)\psi^\prime(\varphi)
\end{pmatrix}^T,\\
 p_0 &:= \nu^{-1} r^{(\alpha-1)}((1+\alpha)^2 \psi^\prime(\varphi)+\psi^{\prime\prime\prime}(\varphi))/(1-\alpha)
\end{align*}
where
\begin{multline*}
\psi(\varphi) :=
1/(\alpha+1) \, \sin((\alpha+1)\varphi)\cos(\alpha\omega) - \cos((\alpha+1)\varphi)\\
- 1/(\alpha-1) \, \sin((\alpha-1)\varphi)\cos(\alpha\omega) + \cos((\alpha-1)\varphi)
\end{multline*}
and \(\alpha = 856399/1572864 \approx 0.54\), \(\omega = 3\pi/2\).
To have a nonzero right-hand side we add the pressure \(p_+ := \sin(xy\pi)\), i.e. \(p := p_0 + p_+\) and \(f := \nabla(p_+)\).
We generate a pressure dominant case by using a small viscosity $\nu = 10^{-3}$.
In Figure~\ref{fig:lshape:dcontpres} and \ref{fig:lshape:contpres} the velocity error and the
novel estimator $\eta_\text{new}$ are plotted for the classical and modified version of
four different finite element methods and uniform and adaptive mesh refinement.
For this example an adaptive refinement is expected to refine the generic singularity
of the velocity in the corner $(0,0)$.

We first discuss the pressure-robust variants of the finite element methods. Looking at the left plots of Figure~\ref{fig:lshape:dcontpres} and \ref{fig:lshape:contpres} we can see that there is a major difference between adaptive and uniform mesh refinement. The adaptive algorithm results in optimal orders of the velocity error and the estimator, while uniform refinement only leads to suboptimal orders as the singularity is not resolved well enough. The only exception is the MINI finite element method which pre-asymptotically converges
  with quadratic speed. This is again thanks to the better polynomial order in the pressure ansatz space and the smooth pressure \(p^+\). Asymptotically also the MINI finite element method shows
  the suboptimal behaviour in case of uniform mesh refinement and first-order convergence in case of adaptive mesh refinement. In all cases, the new error estimator \(\mu_\text{new}\) is efficient
  and gives reasonable refinement indicators.

\begin{figure}
  \begin{center}
    \begin{tikzpicture}[scale=0.9]
      \node at (1,0) {\includegraphics[width = 0.2\textwidth, trim=100mm 20mm 100mm 20mm, clip]{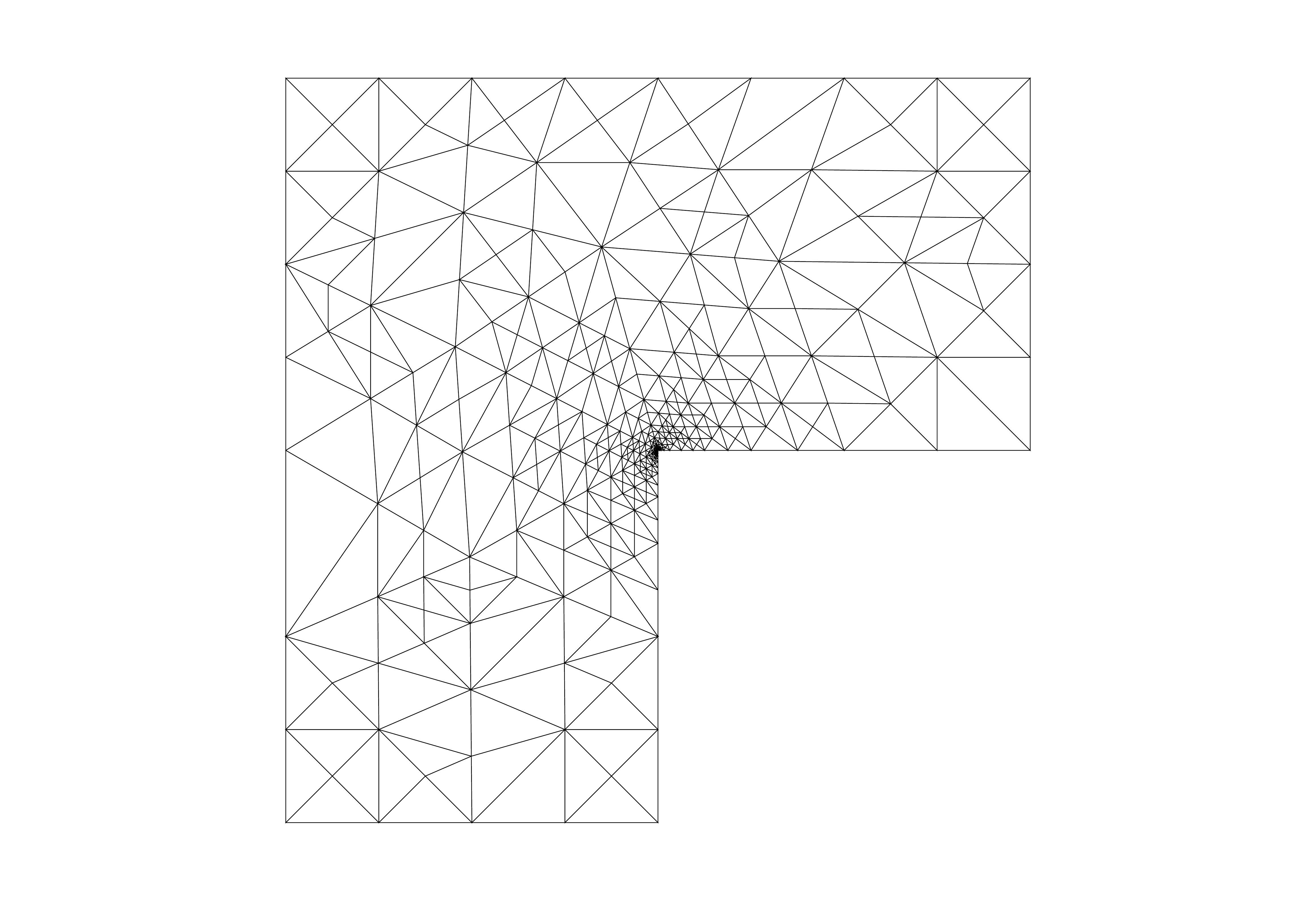}};
      \node at (5,0) {\includegraphics[width = 0.2\textwidth, trim=100mm 20mm 100mm 20mm, clip]{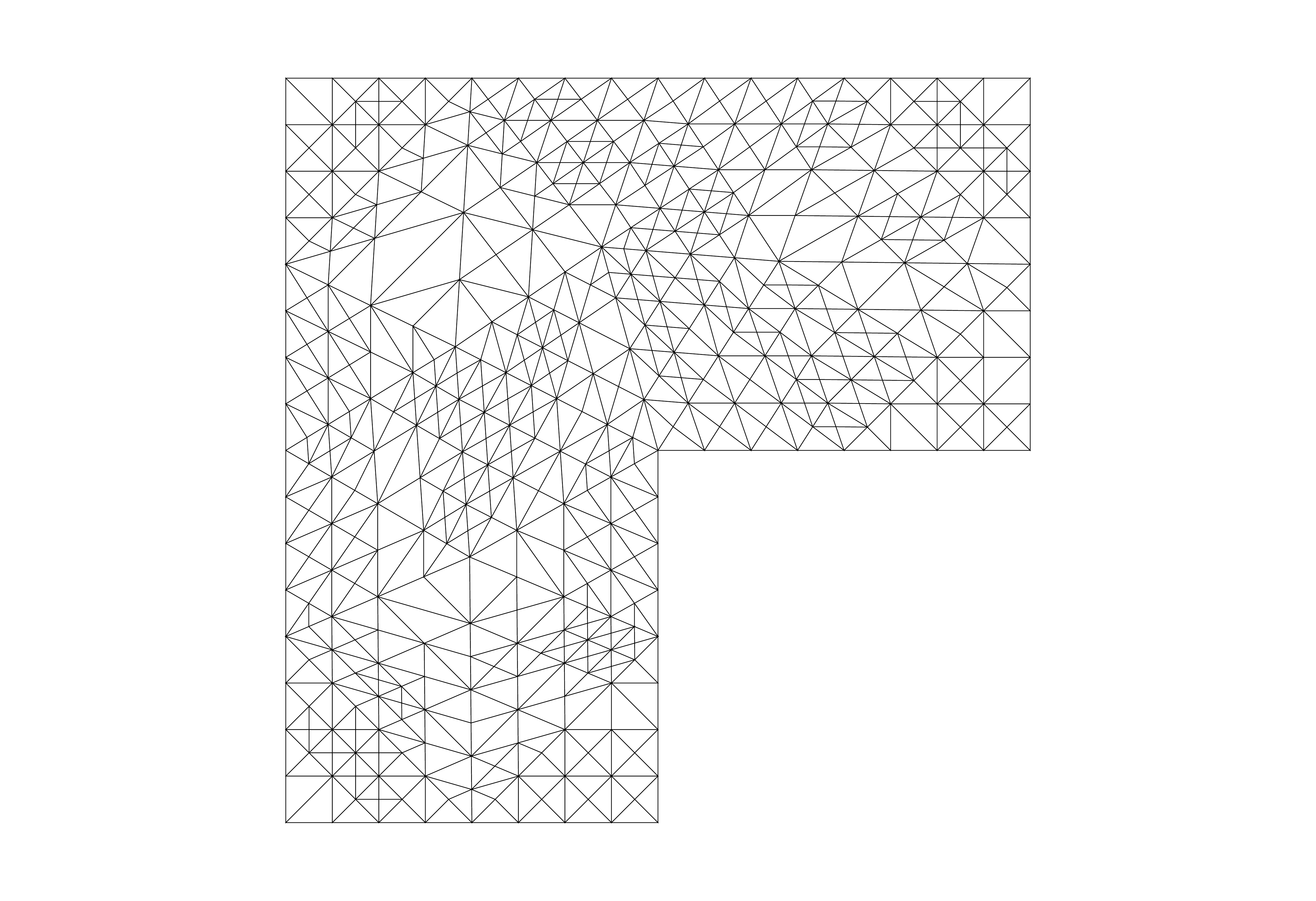}};
      \node at (9,0) {\includegraphics[width = 0.2\textwidth, trim=100mm 20mm 100mm 20mm, clip]{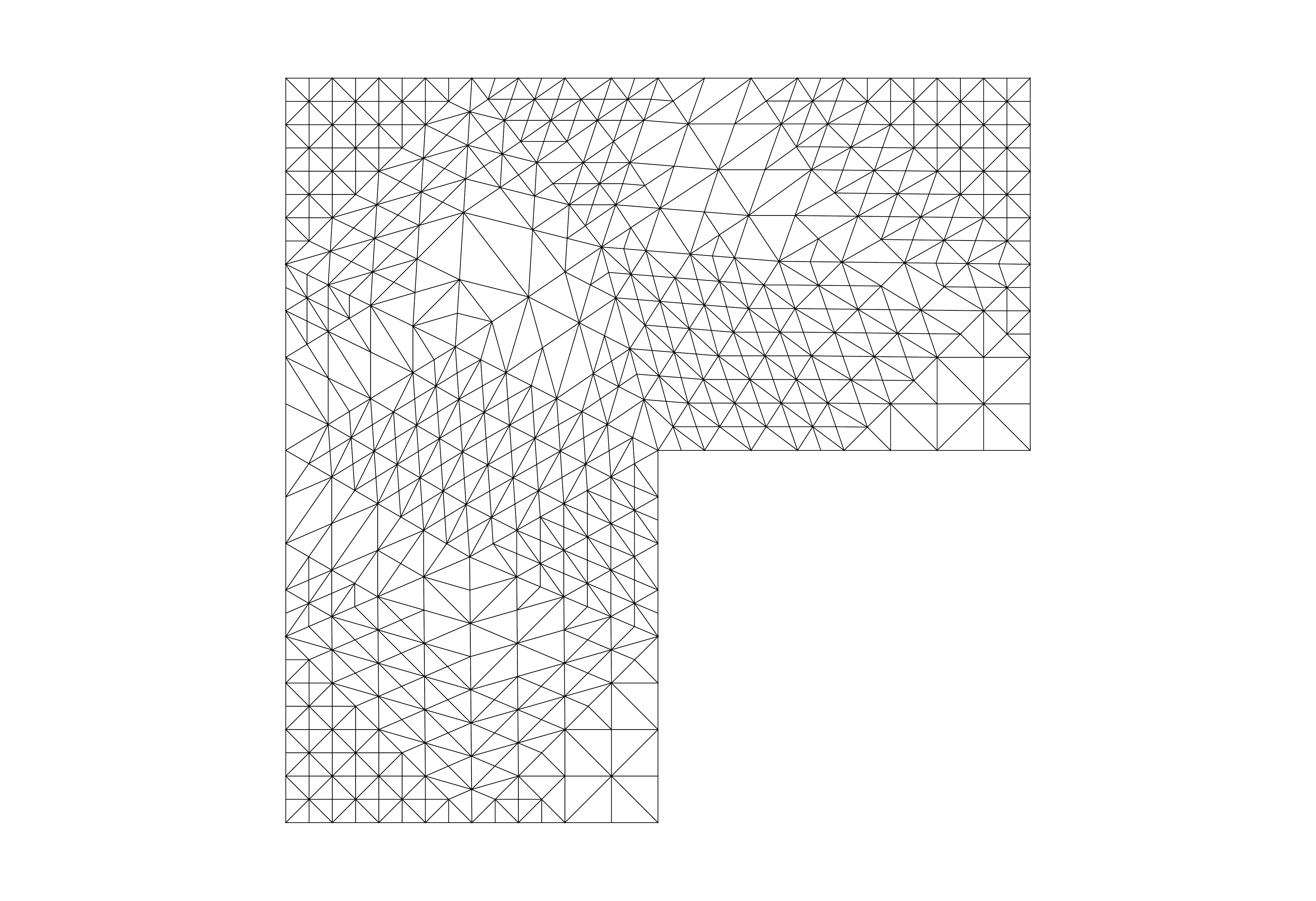}};
      \node at (13,0) {\includegraphics[width = 0.2\textwidth, trim=123mm 31mm 123mm 31mm, clip]{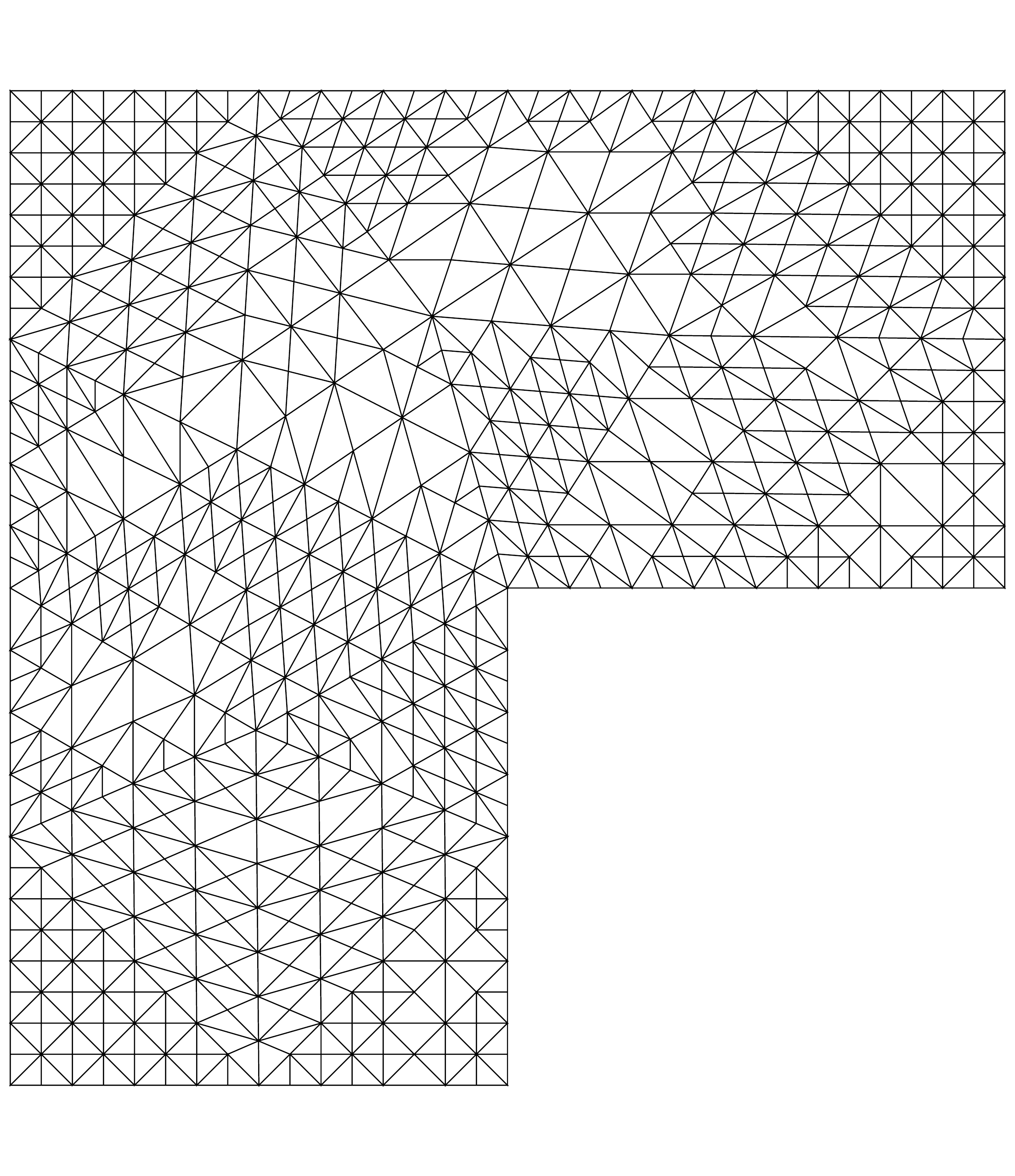}}; 
      \node at (1,-2.5) {(a)}; 
      \node at (5,-2.5) {(b)}; 
      \node at (9,-2.5) {(c)}; 
      \node at (13,-2.5) {(d)}; 
      \node[circle,draw, red, path picture ={
        \node at (path picture bounding box.center){
          \includegraphics[width = 0.55\textwidth, trim=100mm 20mm 100mm 20mm, clip]{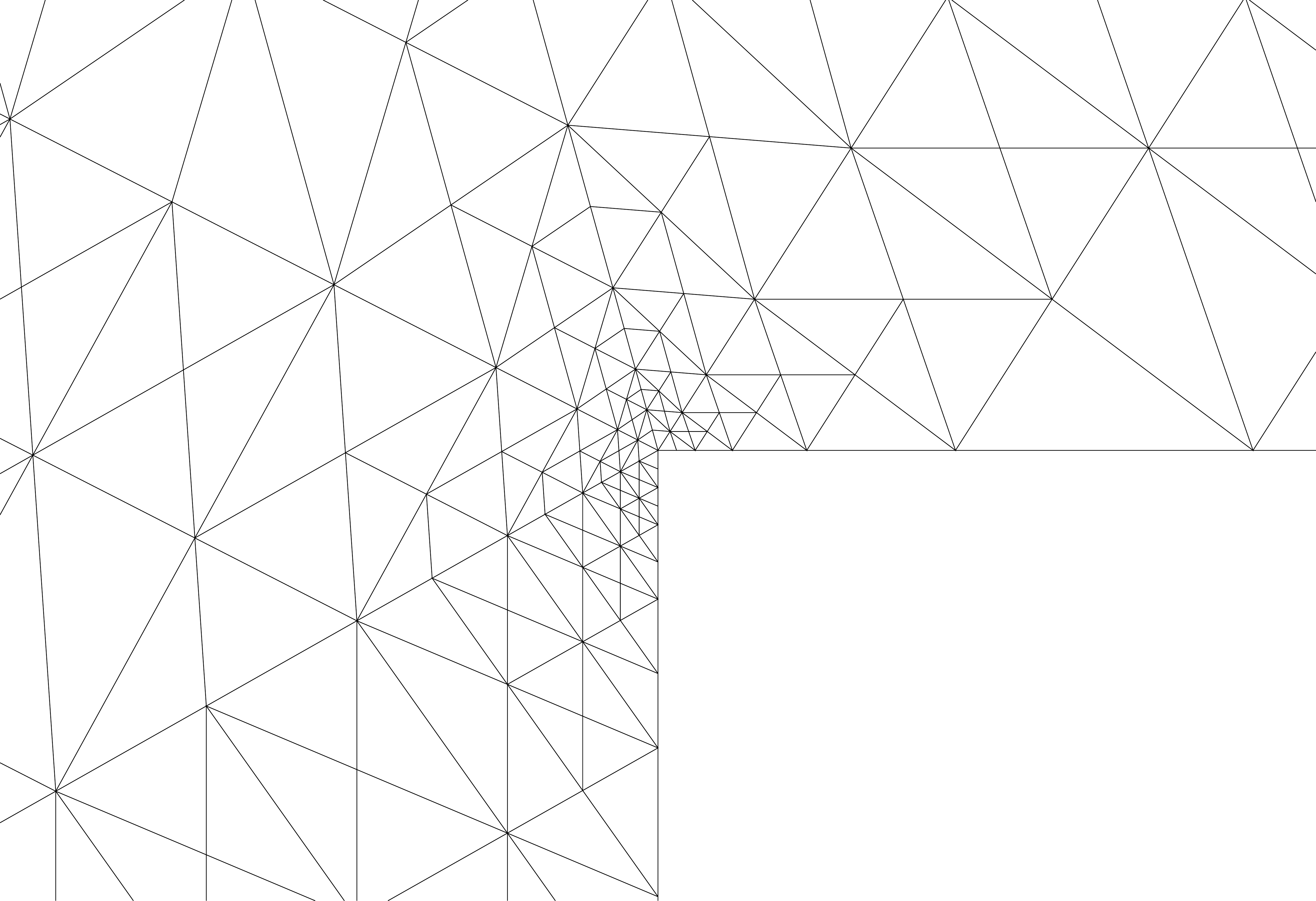}
        };
      }] at (2.0,-1) {~~~~~~~~~~~};
      \draw[red]  (1,0) circle (0.1cm) ;
    \end{tikzpicture}
  \caption{
  (a): according to \(\mu_\text{new}\) adaptively refined mesh with 4584 degrees of freedom for the pressure-robust Taylor--Hood method;
  (b): according to \(\mu_\text{new}\) adaptively refined mesh with 3971 degrees of freedom for the classical Taylor--Hood method;
  (c): according to \(\mu_\text{class}\) adaptively refined mesh with 5119 degrees of freedom for the pressure-robust Taylor--Hood method;
  (d): according to \(\mu_\text{class}\) adaptively refined mesh with 5320 degrees of freedom for the classical Taylor--Hood method
  } \label{fig:adaptref}
  \end{center}
\end{figure}

In case of the classical variants of the finite element methods, totally different observations can be made.
In the right pictures of Figure~\ref{fig:lshape:dcontpres} and \ref{fig:lshape:contpres} we first note that the error is much larger compared to the pressure-robust method.
Furthermore similar as before only adaptive mesh refinement leads to optimal orders. However, it is important to note that the gap between the velocity error of the classical method and the
velocity error of the pressure-robust method stays as large as in the beginning also under adaptive mesh refinement. A possible explanation is given by Figure~\ref{fig:adaptref}
which shows that the classical method refines the mesh almost uniformly. This is reasonable in the sense that the pressure error of the smooth
pressure \(p_+\) dominates the (real and the estimated) discretisation error in the beginning.
The pressure-robust method on the other hand is not polluted by this influence and can concentrate immediately on the corner singularity. However, it is important that the
error estimator is also pressure-robust. If the refinement indicators are taken from \(\mu_\text{class}\), the corner singularity remains unrefined until the dominance of the pressure
error in the error bound is removed.
Hence, the main conclusion is that only a pressure-robust finite element method with a pressure-robust error estimator leads to optimal
meshes with the smallest velocity error.

\bibliographystyle{plain}
\bibliography{lit}

%%%%%%%%%%%%%%%%
\end{document}